\DeclareMathAlphabet{\mathpzc}{OT1}{pzc}{m}{it}
\numberwithin{equation}{section}
\begin{document}

\theoremstyle{plain}

\newtheorem{theorem}{Theorem}[section]
\newtheorem{lemma}[theorem]{Lemma}
\newtheorem{example}[theorem]{Example}
\newtheorem{proposition}[theorem]{Proposition}
\newtheorem{corollary}[theorem]{Corollary}
\newtheorem{definition}[theorem]{Definition}
\newtheorem{Ass}[theorem]{Assumption}
\newtheorem{condition}[theorem]{Condition}
\theoremstyle{definition}
\newtheorem{remark}[theorem]{Remark}
\newtheorem{SA}[theorem]{Standing Assumption}

\newcommand{\of}{[\hspace{-0.06cm}[}
\newcommand{\gs}{]\hspace{-0.06cm}]}

\newcommand\llambda{{\mathchoice
		{\lambda\mkern-4.5mu{\raisebox{.4ex}{\scriptsize$\backslash$}}}
		{\lambda\mkern-4.83mu{\raisebox{.4ex}{\scriptsize$\backslash$}}}
		{\lambda\mkern-4.5mu{\raisebox{.2ex}{\footnotesize$\scriptscriptstyle\backslash$}}}
		{\lambda\mkern-5.0mu{\raisebox{.2ex}{\tiny$\scriptscriptstyle\backslash$}}}}}

\newcommand{\1}{\mathds{1}}

\newcommand{\F}{\mathbf{F}}
\newcommand{\G}{\mathbf{G}}

\newcommand{\B}{\mathbf{B}}

\newcommand{\M}{\mathcal{M}}

\newcommand{\la}{\langle}
\newcommand{\ra}{\rangle}

\newcommand{\lle}{\langle\hspace{-0.085cm}\langle}
\newcommand{\rre}{\rangle\hspace{-0.085cm}\rangle}
\newcommand{\blle}{\Big\langle\hspace{-0.155cm}\Big\langle}
\newcommand{\brre}{\Big\rangle\hspace{-0.155cm}\Big\rangle}

\newcommand{\X}{\mathsf{X}}

\newcommand{\tr}{\operatorname{tr}}
\newcommand{\N}{{\mathbb{N}}}
\newcommand{\cadlag}{c\`adl\`ag }
\newcommand{\on}{\operatorname}
\newcommand{\oP}{\overline{P}}
\newcommand{\oO}{\mathcal{O}}
\newcommand{\D}{D(\mathbb{R}_+; \mathbb{R})}
\newcommand{\bx}{\mathsf{x}}
\newcommand{\bb}{b} 
\newcommand{\bs}{\sigma} 
\newcommand{\bk}{k} 
\renewcommand{\o}{\diamond}
\newcommand{\ka}{\varkappa}
\newcommand{\ob}{\widetilde{b}}

\newcommand{\eps}{\epsilon}

\newcommand{\fPs}{\mathfrak{P}_{\textup{sem}}}
\newcommand{\fPas}{\mathfrak{P}^{\textup{ac}}_{\textup{sem}}}
\newcommand{\rrarrow}{\twoheadrightarrow}
\newcommand{\cA}{\mathcal{C}}
\newcommand{\cR}{\mathcal{R}}
\newcommand{\cK}{\mathcal{K}}
\newcommand{\cQ}{\mathcal{Q}}
\newcommand{\cF}{\mathcal{F}}
\newcommand{\cE}{\mathcal{E}}
\newcommand{\cC}{\mathcal{C}}
\newcommand{\cD}{\mathcal{D}}
\newcommand{\cP}{\mathcal{P}}
\newcommand{\bC}{\mathbb{C}}
\newcommand{\cH}{\mathcal{H}}
\newcommand{\bth}{\overset{\leftarrow}\theta}
\renewcommand{\th}{\theta}
\newcommand{\cG}{\mathcal{G}}

\newcommand{\bR}{\mathbb{R}}
\newcommand{\bN}{\mathbb{N}}
\newcommand{\nnabla}{\nabla}
\newcommand{\f}{\mathfrak{f}}
\newcommand{\g}{\mathfrak{g}}
\newcommand{\oconv}{\overline{\on{co}}\hspace{0.075cm}}
\renewcommand{\a}{\mathfrak{a}}
\renewcommand{\b}{\mathfrak{b}}
\renewcommand{\d}{\mathsf{d}}
\newcommand{\bS}{\mathbb{S}^d_+}
\newcommand{\p}{\dot{\partial}}
\newcommand{\dr}{r} 
\newcommand{\m}{\mathbb{M}}
\newcommand{\Q}{Q}
\newcommand{\C}{\mathsf{C}}
\newcommand{\n}{\overline{\nu}} 
\newcommand{\usc}{\textit{USC}}
\newcommand{\lsc}{\textit{LSC}}
\newcommand{\q}{\mathfrak{q}}

\renewcommand{\emptyset}{\varnothing}

\allowdisplaybreaks

\makeatletter
\@namedef{subjclassname@2020}{%
	\textup{2020} Mathematics Subject Classification}
\makeatother

 \title[Stochastic Representation of Sublinear Semigroups]{A Stochastic Representation Theorem for Sublinear Semigroups with non-local Generators} 
\author[D. Criens]{David Criens}
\author[L. Niemann]{Lars Niemann}
\address{Albert-Ludwigs University of Freiburg, Ernst-Zermelo-Str. 1, 79104 Freiburg, Germany}
\email{david.criens@stochastik.uni-freiburg.de}
\email{lars.niemann@stochastik.uni-freiburg.de}

\keywords{
nonlinear Markov processes; sublinear expectation; sublinear semigroup; pointwise generator;  Hamilton--Jacobi--Bellman equation; nonlinear partial differential equation; viscosity solution; semimartingale characteristics; Knightian uncertainty.
}

\subjclass[2020]{35D40, 47H20, 47J35, 49L25, 60G65, 60J60, 60J35}
\thanks{We are grateful to an anonymous referee of our previous paper \cite{CN22b} for advice concerning viscosity methods.
}

\thanks{LN acknowledges financial support from the DFG project SCHM 2160/13-1.}
\date{\today}

\maketitle

\begin{abstract}
In this paper we investigate sublinear semigroups whose pointwise generators are given by non-local Hamilton--Jacobi--Bellman operators. Our main result provides a stochastic representation in terms of a family of sublinear (conditional) expectations that can be understood as a nonlinear Markov family with uncertain local characteristics.
The proofs are based on viscosity methods. 
\end{abstract}

\section{Introduction}

Let \((T_t)_{t \in \mathbb{R}_+}\) be a linear conservative Feller semigroup on the Banach space \(C_0 (\bR^d; \bR)\) of continuous functions \(\bR^d \to \bR\) that are vanishing at infinity. It is classical that \((T_t)_{t \in \bR_+}\) can be uniquely characterized by its infinitesimal (pointwise) generator \(A\) and in applications it is standard to model the semigroup via a concrete version of its generator. Another well-known fact is that \((T_t)_{t \in \bR_+}\) has a stochastic representation of the form
\[
T_t (\psi) (x) = E^{P_x} \big[ \psi (X_t) \big], \quad t \in \bR_+, \ \psi \in C_0 (\bR^d; \bR), \ x \in \bR^d, 
\]
where \(\{P_x \colon x \in \bR^d\}\) is the associated Feller family (consisting of probability measures on a path space with coordinate process \(X\)). 
This representation is particularly useful when the dynamics of \(\{P_x \colon x \in \bR^d\}\) can be described in a tractable manner. 
For example, in case the generator \(A\) is a non-local operator of the type
\begin{equation} \label{eq: A intro} \begin{split}
	A (\psi) (x) = \langle \nabla & \psi (x), b (x) \rangle + \tfrac{1}{2} \on {tr} \big[ \nabla^2 \psi (x) \sigma \sigma^* (x) \big]
	\\&+ \int \big[ \psi (x + k (x, z)) - \psi (x) - \langle \nabla \psi (x), h (k (x, z)) \rangle \big] \, \n (dz),
\end{split} \end{equation}
for a suitable class of test functions \(\psi \colon \bR^d \to \bR\), then 
each measure \(P_x\) is the law of a solution process \((Y^x_t)_{t \in \bR_+}\) to the stochastic differential equation
\begin{alignat*}{2}
	d Y^x_t &= b (Y^x_t) dt + \sigma (Y^x_t) d W_t &&+ \int \, (k (Y^x_t, y) - h (k (Y^x_t, y))) (\mu^L - \nu^L) (dt, dy) \\&&&+ \int \, h (k (Y^x_t, y)) \mu^L (dt, dy),
\end{alignat*}
with initial condition \(Y^x_0 = x\), where \(W\) is a standard Brownian motion and \(\mu^L\) is a Poisson random measure with intensity measure~\(\nu^L = dt \otimes \n\). This structure of the generator is quite typical in the sense that, if the space \(C^\infty_c(\bR^d; \bR)\)  of smooth functions with compact support is contained in the domain of \(A\), the celebrated Courr\`ege--von Waldenfels theorem (see, e.g., \cite[Theorem~2.21]{schilling}) shows that the pointwise generator \(A\) is of the form \eqref{eq: A intro} for test functions \(\psi\) from \(C^\infty_c (\bR^d; \bR)\). 

\subsubsection*{The purpose of this article}
In this paper, we are interested in versions of these results for {\em sublinear} semigroups, i.e., families \((T_t)_{t \in \bR_+}\) of \emph{sublinear} operators. 
More specifically, we consider a (conservative) semigroup \((T_t)_{t \in \bR}\) of sublinear operators on the space \(C_b (\bR^d; \bR)\) of bounded continuous functions whose pointwise generator \(A\) is given as a non-local Hamilton--Jacobi--Bellman operator of the form
\begin{equation} \label{eq: gen intro} \begin{split}
A (\psi) (x) = & \, \sup  \Big\{ \langle \nabla \psi (x), b (f,x) \rangle + \tfrac{1}{2} \on {tr} \big[ \nabla^2 \psi (x) \sigma \sigma^* (f,x) \big]
\\& \ \, + \int \big[ \psi (x + k (f, x, z)) - \psi (x) - \langle \nabla \psi (x), h (k (f, x, z)) \rangle \big] \, \n (f, dz) \colon f \in F \Big\}
\end{split}
\end{equation}
for some (arbitrary) parameter space \(F\) and three functions \(b \colon F \times \bR^d \to \bR^d\), \(\sigma \colon F \times \bR^d \to \mathbb{S}^d_+\) and
\(k \colon F \times \bR^d \times L \to \bR^d \), where \(L\) is a Lusin space equipped with a family \(\mathscr{N} := \{\n\, (f, dz) \colon f \in F\}\) of L\'evy-type measures.

As in the linear case, this structure of \(A\) is somehow typical. 
Indeed, a version of the Courr\`ege--von Waldenfels theorem for sublinear semigroups has recently been proved in \cite{K21}. It shows that for any conservative sublinear Markovian semigroup on \(C_b(\bR^d; \bR)\), for which \(C^\infty_c(\bR^d;\bR)\) is contained in the domain of the pointwise generator, latter is of form \eqref{eq: gen intro} for test functions \(\psi\) from \(C^\infty_c(\bR^d; \bR)\).

We ask for conditions on \(b, \sigma, k\) and \(\mathscr{N}\) that ensure a tractable stochastic representation that is useful to study properties of \((T_t)_{t \in \bR_+}\).
In particular, a stochastic representation only depending on the input data \(b, \sigma, k\) and \(\mathscr{N}\) shows that the pointwise generator given in \eqref{eq: gen intro} characterizes the semigroup \((T_t)_{t \in \bR_+}\) uniquely.

\subsubsection*{The main result}
A natural replacement for the Feller family \(\{P_x \colon x \in \bR^d\}\) from the linear case is a nonlinear Markov family \(\{\cE^x \colon x \in \bR^d\}\) consisting of sublinear expectations of the form
\[\cE^x (\, \cdot \,) = \sup_{P \in \cR(x)} E^P \big[ \cdot \big]\] with a collection \(\cR(x)\) of semimartingale laws with initial distribution \(\delta_x\) whose local semimartingale characteristics are prescribed in a Markovian way.
More precisely, we set
\begin{align*}
\cR(x) := \Big\{P \in \mathfrak{P}_{\text{sem}}^{\text{ac}} \colon P \circ X_0^{-1} = \delta_x,  \,
(dt \otimes P)\text{-a.e. } (dB^P /d t, dC^P/d t, \nu^P / d t) \in \Theta (X) \Big\},
\end{align*}
where
\[
\Theta (x) := \big\{(b (f, x), \sigma \sigma^* (f, x), \n (f, \, \cdot\,) \o k (f, x, \cdot \,)^{-1}) \colon f \in F \big\}, \quad x \in \bR^d.
\]
The main result of this paper provides conditions on the coefficients \(b, \sigma, k\) and \(\mathscr{N}\) such that \((T_t)_{t \in \bR_+}\) has the stochastic representation 
\[
T_t (\psi) (x) = \mathcal{E}^x (\psi (X_t)) = \sup_{P \in \cR (x)} E^P \big[ \psi (X_t) \big]
\]
for all \(t \in \bR_+\), \(\psi \in C_b (\bR^d; \bR)\) and \(x \in \bR^d\). In particular, our result establishes a one-to-one correspondence between the sublinear semigroup and its (pointwise) generator. 

\subsubsection*{Comments on the proof}
A general representation result for certain sublinear Markovian semigroups on \(C_b(\bR^d; \bR)\) in terms of an abstract nonlinear Markov process was proved in \cite{denk2018extension} and later applied in \cite{denk2020semigroup} to sublinear Markovian convolution semigroups.
For the latter, explicit stochastic representations have been obtained and investigated in terms of nonlinear L\'evy processes as introduced in \cite{neufeld2017nonlinear}, see \cite{hol16, K19, K21}. 
Furthermore, for such semigroups, the paper \cite{K21} shows a one-to-one correspondence between the semigroup and its pointwise generator. 
Certain stochastic representations for nonlinear (jump-)diffusion frameworks related to this paper, albeit under different assumptions, have been obtained in the recent papers \cite{CN22a, CN22b, CN23a}. 

A central tool to derive the representation theorem is 
the link of the semigroup \((T_t)_{t \in \bR_+}\) and its generator \(A\) through an evolution equation.
More precisely, in case the domain of the (pointwise) generator is rich enough, the map \((t,x) \mapsto T_t(\psi)(x)\) solves the evolution equation
\begin{equation} \label{eq: PDE A intro}
\begin{cases}   
\partial_t u (t, x) - A(u(t, \cdot \,))(x) = 0, & \text{for } (t, x) \in (0, \infty) \times \mathbb{R}^d, \\
u (0, x) = \psi (x), & \text{for } x \in \bR^d,
\end{cases}
\end{equation}
in a viscosity sense, provided it is continuous. 
The stochastic representation and the unique characterization via the generator follows once one proves that the map 
\[
v (t, x) := \mathcal{E}^x (\psi (X_t)), \quad (t, x) \in \bR_+ \times \bR^d,
\]
is the unique viscosity solution to the evolution equation \eqref{eq: PDE A intro}. At this point, we highlight that the idea to use viscosity methods to get a representation of sublinear semigroups traces back to the paper \cite{lions_nisio}.

Previous approaches (as, e.g., in \cite{CN22b,CN23a,hol16,K19,K21,neufeld2017nonlinear}) to identify \(v\) as the unique viscosity solution to \eqref{eq: PDE A intro} are usually divided into three parts. First, one shows that the map \(v\) is continuous, which is closely related to the \(C_b\)--Feller property of the sublinear semigroup \(S_t (\psi) (x) := \mathcal{E}^x (\psi (X_t))\), and second, one proves the mere sub- and supersolution properties. Finally, a comparison principle for viscosity solutions completes the proof.

In this paper, we address the issue all at once, adapting a suggestion of a referee of our paper \cite{CN22b}. We verify that the upper and lower envelopes of \(v\) are viscosity sub- and supersolutions. Thanks to a general comparison result for viscosity solutions to non-local Hamilton--Jacobi--Bellman PDEs from \cite{hol16}, we then conclude that \(v\) is a viscosity solution to~\eqref{eq: PDE A intro}. In particular, this verifies the continuity of \(v\) en passant. 

We highlight that, compared to the representation theorem from \cite{CN23a}, we demand fewer assumptions regarding the modeling structure. Specifically, the action space \(F\) is not assumed to be compact, the coefficients can be unbounded, less integrability conditions are imposed and
\(\Theta (x), \, x \in \bR^d,\) is not required to be convex.
This last point is of particular significance from a modeling perspective, as it permits the direct treatment of the coefficient \(b, \sigma, k\) and \(\mathscr{N}\) without the need to consider the convexity of \(\Theta\).
Observe, however, that the application of the comparison principle from \cite{hol16} requires global Lipschitz continuity of the coefficients. In contrast, the paper \cite{CN23a} establishes the \(C_b\)--Feller property of \((S_t)_{t \in \bR_+}\) under {\em local} Lipschitz conditions.

\subsubsection*{Further comments on related literature}
The investigation of nonlinear stochastic processes dates back to the  seminal work of Peng \cite{peng2007g, peng2008multi} on the \(G\)-Brownian motion. Leaning on ideas developed in \cite{neufeld2014measurability, neufeld2017nonlinear, NVH}, a general framework for constructing nonlinear Markov processes was established in \cite{hol16}.
In recent years, several classes of nonlinear (Markov) processes were constructed and analyzed, both from the perspective of processes under uncertainty \cite{biagini, C22a, CN22a, CN22b, CN23a, fadina2019affine, hol16, hu2021g,  neufeld2017nonlinear, nutz}, as well as 
sublinear Markovian semigroups \cite{denk2020semigroup, GNR22, K19, K21, NR}. We refer to \cite[Chapter 4]{hol16} and \cite{CN22b} for a comparison of the methods.

Let us also mention that an analytic approach to the unique characterization of sublinear semigroups not relying on viscosity methods was recently proposed in \cite{blessing22}. 
More specifically, the paper provides a comparison principle for sublinear semigroups (with certain properties) in terms of so-called \(\Gamma\)-generators. To illustrate the theory, the authors apply their result, for example, to connect the Nisio semigroup, as constructed in \cite{NR}, to the control semigroup of \(G\)-Brownian motion. 
It is an interesting open problem to investigate how this theory relates to our main results. 
We leave this question for future investigations.

\subsubsection*{Structure of the paper}
This paper is structured as follows: in Section \ref{sec: nonlinear markov} we introduce our setting and in Section~\ref{sec: main results} we present our main results on the unique characterization of the semigroup in terms of its pointwise generator.
The proofs of our main results are given in Section \ref{sec: proofs}.
We collect auxiliary results for L\'evy-type HJB equations in the Appendices~\ref{app: comparison} and \ref{app: well-defined}.

\section{Nonlinear Markov Processes with Jumps} \label{sec: nonlinear markov}

In this section we introduce the mathematical setting.
Our main results are presented in Section \ref{sec: main results}.

\subsection{The Framework}
Let \(d \in \mathbb{N}\) be a fixed dimension and define $\Omega$ to be the space of all \cadlag functions \(\mathbb{R}_+ \to \mathbb{R}^d\) endowed with the Skorokhod \(J_1\) topology. The Euclidean scalar product and the corresponding Euclidean norm are denoted by \(\langle\, \cdot\, , \cdot\,\rangle\) and \(\|\cdot\|\), respectively.
Further, we write \(X\) for the canonical process on $\Omega$, i.e., \(X_t (\omega) = \omega (t)\) for \(\omega \in \Omega\) and \(t \in \mathbb{R}_+\). 
It is well-known that \(\mathcal{F} := \mathcal{B}(\Omega) = \sigma (X_t, t \geq 0)\).
We define $\F := (\mathcal{F}_t)_{t \geq 0}$ as the canonical filtration generated by $X$, i.e., \(\mathcal{F}_t := \sigma (X_s, s \leq t)\) for \(t \in \mathbb{R}_+\). 

The set of probability measures on \((\Omega, \mathcal{F})\) is denoted by \(\mathfrak{P}(\Omega)\) and endowed with the usual topology of convergence in distribution.
Let \(\mathbb{S}^d_+\) be the set of all symmetric nonnegative semidefinite real-valued \(d \times d\) matrices, and set 
\[
\mathcal{L} := \Big\{ K \text{ measure on } (\bR^d, \mathcal{B}(\bR^d)) \colon K (\{0\}) = 0, \  \int \,(\|x\|^2 \wedge 1)\, K (dx) < \infty\Big\}.
\]
We endow \(\mathcal{L}\) with the weakest topology under that the maps
\[
K \mapsto \int f (x) (\|x\|^2 \wedge 1)\, K (dx), \quad f \in C_b (\bR^d; \bR), 
\]
are continuous. With this topology, the space \(\mathcal{L}\) is Polish, see \cite[Lemma~A.1]{C22a}. 

Take an arbitrary set \(F\), a dimension \(r \in \mathbb{N}\) and a Lusin space \(L\). For every \(f \in F\), let \(\n (f, dz)\) be a (possibly infinite) measure on \((L, \mathcal{B}(L))\). 
Further, fix three functions \begin{align*}
	&b \colon F \times \bR^d \to \bR^d, \\
	&\sigma \colon F \times \bR^d \to \bR^{d \times r}, \\
	&k \colon F \times \bR^d \times L \to \bR^d
\end{align*}
such that, for all \((f, y) \in F\times \bR^d\), \(z \mapsto k(f, y, z)\) is Borel measurable and 
\begin{align} \label{eq: assumption k inte}
\int \,(\|k (f, y, z)\|^2 \wedge 1)\, \n (f, dz) < \infty.
\end{align}
For a Borel function \(g \colon L \to \mathbb{R}^d\) and a measure \(\nu\) on \((L, \mathcal{B}(L))\), we define \(\nu \o g^{-1}\) to be the measure
\[
\nu \o g^{-1} (G) := \int \1_{G \backslash \{0\}} (g (z))\, \nu (dz), \quad G \in \mathcal{B}(\bR^d). 
\]
Finally, we define a correspondence, i.e., a set-valued mapping, \(\Theta \colon \bR^d \twoheadrightarrow \mathbb{R}^d \times \mathbb{S}^d_+ \times \mathcal{L}\) by
\[
\Theta (x) := \big\{ \big(b (f,x), \sigma \sigma^* (f,x), \n (f, \, \cdot\, ) \o k (f, x, \cdot\, )^{-1} \big) \colon f \in F \big\}, \quad x \in \bR^d.
\]
Below, we use the correspondence \(\Theta\) to incorporate ambiguity to the infinitesimal description of a Markov process.
In this paper, we work under the following standing assumption. 

\begin{SA} \label{SA: MG}
\(\Theta\) has a measurable graph, i.e., the set
\[
\on{gr} \Theta = \big\{ (x, b, a, K) \in \bR^d \times \bR^d \times \mathbb{S}^d_+ \times \mathcal{L} \colon (b, a, K) \in \Theta (x) \big\} 
\]
is Borel. 
\end{SA} 

\begin{remark}
    Standing Assumption \ref{SA: MG} holds, for example, under the following condition (cf. \cite[Theorem~8.2.8]{SVA_AF}):
    \begin{enumerate}
        \item[\textup{(MG)}]
        \(F\) is a closed subset of a Polish space, \(b, \sigma\) and \(\n\o k^{-1}\) are Carath\'eodory functions (in the sense that they are continuous in the \(F\) variable and Borel in the remaining variables), and \(\Theta\) is closed-valued, i.e., \(\Theta (x)\) is closed for every \(x \in \bR^d\).
    \end{enumerate}
    In particular, notice that the correspondence \(\Theta\) is closed-valued once \(b, \sigma\) and \(\n \o k^{-1}\) are Carath\'eodory functions and \(F\) is a compact space. 
\end{remark}

\subsection{Nonlinear Markov Families and their Semigroups}

Let \(\fPs\) be the set of all semimartingale measures, i.e., the set of all laws of \(\bR^d\)-valued semimartingales. For \(P \in \fPs\) we denote the corresponding semimartingale characteristics, with respect to a fixed Lipschitz continuous truncation function \(h \colon \bR^d \to \bR^d\), by \((B^P, C^P, \nu^P)\).
Moreover, we set 
\[
\fPas:= \big\{ P \in \fPs \colon P\text{-a.s. } (B^P, C^P, \nu^P) \ll \llambda \big\}, 
\]
which is the set of all semimartingale measures with absolutely continuous semimartingale characteristics (with respect to the Lebesgue measure \(\llambda\)). 
Finally, for $x \in \bR^d$, we define 
\begin{align*}
\cR (x) := \Big\{ P \in \fPas \colon P(X_0 = x) = 1, \,
(\llambda \otimes P)\text{-a.e. } (dB^P /d\llambda, dC^P/d\llambda, \nu^P/ d \llambda) \in \Theta (X) \Big\}.
\end{align*}
From now on, we work under the following standing assumption.
\begin{SA} \label{SA: non empty}
\(\cR (x) \not = \emptyset\) for all \(x \in \bR^d\).
\end{SA}

\begin{remark} \label{rem: non empty}
    Standing Assumption \ref{SA: non empty} can be deduced from existence theorems for Markovian semimartingale laws as given in \cite{criens19,EbKa19,jacod79,JS}.
    For instance, Proposition \ref{prop: existence} below shows that it holds under appropriate boundedness and continuity assumptions on \(b, \sigma, k\) and \(\n\), see also Remark \ref{rem: existence} below.
\end{remark}

Recall that a function \(\psi \colon \Omega \to [- \infty, \infty]\) is called upper semianalytic if the set \(\{ \omega \in \Omega \colon \psi (\omega) > c\}\) is analytic for every \(c \in \mathbb{R}\). We define a family \(\{\cE^x \colon x \in \bR^d\}\) of sublinear expectations on the cone of upper semianalytic functions by the formula
 \[
 \mathcal{E}^x (\psi) := \sup_{P \in \cR (x)} E^{P} \big[ \psi \big]
 \]
 for \(x \in \bR^d\) and upper semianalytic \(\psi \colon \Omega \to [- \infty, \infty]\). As we explain next, the family \(\{\mathcal{E}^x \colon x \in \bR^d\}\) can be interpreted as a nonlinear Markov family. 
 
 For \(t \in \mathbb{R}_+\), we denote the shift operator \(\theta_t \colon \Omega \to \Omega\) by \[\theta_t (\omega) := \omega(\,\cdot + t), \quad \omega \in \Omega.\]
The next result provides the so-called nonlinear Markov property of the family
\( \{ \cE^x \colon x \in \bR^d\}\), cf. \cite[Lemma 4.32]{hol16} and \cite[Proposition 2.8]{CN22b}. We omit a detailed proof.

\begin{proposition}  [Nonlinear Markov Property] \label{prop: markov property}
For every upper semianalytic function \( \psi \colon \Omega \to [- \infty, \infty] \), the equality
\[
\cE^x( \psi \circ \theta_t) = \cE^x ( \cE^{X_t} (\psi))
\]
holds for every \((t, x) \in \bR_+ \times \bR^d\).
\end{proposition}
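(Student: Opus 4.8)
The plan is to prove the two inequalities $\cE^x(\psi\circ\theta_t)\le\cE^x(\cE^{X_t}(\psi))$ and $\cE^x(\psi\circ\theta_t)\ge\cE^x(\cE^{X_t}(\psi))$ separately, following the dynamic-programming scheme for classes of semimartingale laws developed in \cite{hol16,CN22b}. Two structural facts are needed throughout. First, the map $y\mapsto\cE^y(\psi)$ is upper semianalytic, and composing it with the continuous map $X_t$ shows that $\omega\mapsto\cE^{X_t(\omega)}(\psi)$ is upper semianalytic; this rests on the fact that, under Standing Assumption~\ref{SA: MG}, the graph $\{(y,P):P\in\cR(y)\}$ is analytic, which in turn uses that $\fPas$ together with the characteristics map $P\mapsto(B^P,C^P,\nu^P)$ carries a Borel/analytic structure (cf.\ \cite{neufeld2014measurability}), so the constraint defining $\cR(y)$ cuts out an analytic set. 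Second --- and this is the heart of the argument --- the family $\{\cR(y)\}_{y\in\bR^d}$ is stable under conditioning and pasting along $\cF_t$.

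For ``$\le$'', fix $P\in\cR(x)$ and let $(P^\omega)_{\omega\in\Omega}$ be a regular conditional probability of $P$ given $\cF_t$. Writing the characteristics of $X$ under $P$ in differential form and localizing, one checks that for $P$-a.e.\ $\omega$ the shifted measure $P^\omega\circ\theta_t^{-1}$ is a semimartingale law with absolutely continuous characteristics, that $X_0=X_t(\omega)$ under it, and that its characteristic densities coincide $(\llambda\otimes P^\omega\circ\theta_t^{-1})$-a.e.\ with the time-shifted densities of $P$; since the latter lie in $\Theta(X)$, this gives $P^\omega\circ\theta_t^{-1}\in\cR(X_t(\omega))$ for $P$-a.e.\ $\omega$. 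Using the shift identity $E^{P^\omega}[\psi\circ\theta_t]=E^{P^\omega\circ\theta_t^{-1}}[\psi]$ together with the tower property under $P$,
\[
E^P[\psi\circ\theta_t]=E^P\big[E^{P^\omega\circ\theta_t^{-1}}[\psi]\big]\le E^P\big[\cE^{X_t}(\psi)\big]\le\cE^x\big(\cE^{X_t}(\psi)\big),
\]
where the last step uses $P\in\cR(x)$ and upper semianalyticity of $\omega\mapsto\cE^{X_t(\omega)}(\psi)$. Taking the supremum over $P\in\cR(x)$ proves the inequality.

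For ``$\ge$'', fix $\eps>0$. Upper semianalyticity of $y\mapsto\cE^y(\psi)$ and analyticity of the graph of $\cR$ let us apply the Jankov--von Neumann selection theorem to obtain a universally measurable map $\omega\mapsto Q^\omega$ with $Q^\omega\in\cR(X_t(\omega))$ and $E^{Q^\omega}[\psi]\ge\cE^{X_t(\omega)}(\psi)-\eps$ (with the standard modification when $\cE^{X_t(\omega)}(\psi)$ is infinite, obtained by truncating $\psi$ and using monotone convergence). For $P\in\cR(x)$ define the pasted measure $\bar P$ by $\bar P|_{\cF_t}=P|_{\cF_t}$ and $\bar P(\theta_t^{-1}(\,\cdot\,)\mid\cF_t)=Q^\omega(\,\cdot\,)$. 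The characteristics of $X$ under $\bar P$ agree with those of $P$ on $[0,t]$ and, after the shift, with those of $Q^\omega$ on $[t,\infty)$, so they are valued in $\Theta$ along the canonical path and hence $\bar P\in\cR(x)$. Therefore
\[
\cE^x(\psi\circ\theta_t)\ge E^{\bar P}[\psi\circ\theta_t]=E^P\big[E^{Q^\omega}[\psi]\big]\ge E^P\big[\cE^{X_t}(\psi)\big]-\eps,
\]
and taking the supremum over $P\in\cR(x)$ and then letting $\eps\downarrow0$ concludes.

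The main obstacle is the stability of the constraint on the differential characteristics under conditioning and pasting: that $P^\omega\circ\theta_t^{-1}\in\cR(X_t(\omega))$ for $P$-a.e.\ $\omega$ and that the pasted measure $\bar P$ belongs to $\cR(x)$. This is genuinely a statement about semimartingale characteristics --- their invariance under time shift, their behaviour under regular conditioning and concatenation, and the joint measurability of the characteristic densities in $(\omega,P)$ --- and is argued most cleanly through the martingale-problem formulation of $\cR(\,\cdot\,)$, as in \cite[Lemma~4.32]{hol16} and \cite[Proposition~2.8]{CN22b}. Once this measurable structure is in place, the remaining ingredients --- analyticity of the graph of $\cR$, upper semianalyticity of $y\mapsto\cE^y(\psi)$, the Jankov--von Neumann selection, and the manipulations with shifts and regular conditional probabilities --- are standard, which is why the detailed proof may be omitted here.
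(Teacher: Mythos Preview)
Your proposal is correct and follows exactly the approach the paper has in mind: the paper omits the proof entirely, pointing to \cite[Lemma~4.32]{hol16} and \cite[Proposition~2.8]{CN22b}, and your sketch reproduces precisely the dynamic-programming argument from those references (analyticity of the graph of $\cR$, stability under conditioning and pasting, Jankov--von Neumann selection).
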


Proposition \ref{prop: markov property} confirms the intuition that \( \{\cE^x \colon x \in \bR^d\} \) is a nonlinear Markov family, as it implies the equality
\[ \cE^x(\psi(X_{s+t})) = \cE^x( \cE^{X_t}( \psi (X_s))) \]
for every upper semianalytic function \(\psi \colon \bR^d \to [- \infty, \infty]\), \(s,t \in \bR_+ \) and \(x \in \bR^d\).

In the linear case, Markov families have an intrinsic relation to operator semigroups. As we discuss next, the same is true for nonlinear Markov families. 

\begin{definition}
Let \( \mathcal{H} \) be a convex cone of functions \( f \colon \bR^d \to \bR \) containing all constant functions.
A family of sublinear operators \( S_t \colon \mathcal{H} \to \mathcal{H}, \ t \in \bR_+,\) is called a \emph{sublinear Markovian semigroup} on \( \mathcal{H} \) if it satisfies the following properties:
\begin{enumerate}
    \item[\textup{(i)}] \( (S_t)_{t \in \bR_+} \) has the semigroup property, i.e., 
          \( S_s S_t = S_{s+t} \) for all \(s, t \in \bR_+ \)  and
          \( S_0 = \on{id} \),
    
    \item[\textup{(ii)}] \( S_t \) is monotone for each \( t \in \bR_+\), i.e., 
    \( f, g \in \mathcal{H} \) with \( f \leq g \) implies \(S_t (f) \leq S_t (g) \),
    
    \item[\textup{(iii)}] \( S_t \) preserves constants for each  \( t \in \bR_+\), i.e.,
    \( S_t(c) = c \) for each \( c \in \bR  \).
\end{enumerate}
\end{definition}

We introduce a family of operators \( (S_t)_{t \in \bR_+} \) by the formula
\begin{equation} \label{eq: semigroup}
    S_t ( \psi )(x) := \cE^x(\psi(X_t)), \quad (t, x) \in \bR_+ \times \bR^d.
\end{equation}

The following result is a consequence of Proposition~\ref{prop: markov property}. It is a restatement of \cite[Remark~4.33]{hol16} for our framework.

\begin{proposition} [Nonlinear Semigroup Property]
The family of operators \( (S_t)_{t \in \bR_+} \) from \eqref{eq: semigroup}
defines a sublinear Markovian semigroup on the set of bounded upper semianalytic functions from \(\bR^d\) to \(\bR\).
\end{proposition}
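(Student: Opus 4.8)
The plan is to verify, one at a time, that each $S_t$ is a well-defined sublinear operator on the convex cone $\mathcal H$ of bounded upper semianalytic functions $\bR^d \to \bR$, that it is monotone and preserves constants, and that $(S_t)_{t\in\bR_+}$ has the semigroup property, deducing everything from the representation $\cE^x(\,\cdot\,)=\sup_{P\in\cR(x)}E^P[\,\cdot\,]$ and the nonlinear Markov property of Proposition~\ref{prop: markov property}. \emph{Well-definedness.} Fix $t\in\bR_+$ and $\psi\in\mathcal H$. Since $X_t\colon\Omega\to\bR^d$ is Borel measurable and the composition of an upper semianalytic function with a Borel map is again upper semianalytic, $\omega\mapsto\psi(X_t(\omega))$ is bounded and upper semianalytic, so $S_t(\psi)(x)=\cE^x(\psi(X_t))$ is a well-defined real number with $|S_t(\psi)(x)|\le\|\psi\|_\infty$ (the supremum is over the nonempty set $\cR(x)$ by Standing Assumption~\ref{SA: non empty}). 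The substantive point is that $x\mapsto S_t(\psi)(x)$ is itself upper semianalytic; this follows once one knows that the graph $\{(x,P)\in\bR^d\times\mathfrak{P}(\Omega)\colon P\in\cR(x)\}$ is analytic, since then $x\mapsto\sup_{P\in\cR(x)}E^P[\varphi]$ is upper semianalytic for every bounded upper semianalytic $\varphi$ by the Jankov--von Neumann selection/projection theorem, exactly as in \cite{hol16,CN22b}. Analyticity of the graph in turn rests on Standing Assumption~\ref{SA: MG} together with the Borel measurability of the map assigning to a semimartingale law its characteristic triple and of the defining a.e.\ condition for $\cR$. I expect this measurable-selection step to be the only ingredient that is not purely formal, and in the write-up it would presumably be invoked from the cited prior work.

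\emph{Monotonicity, constants, sublinearity.} These are immediate from $\cE^x$ being a sublinear expectation. If $\psi\le\varphi$ pointwise, then $\psi(X_t)\le\varphi(X_t)$ $P$-a.s., hence $E^P[\psi(X_t)]\le E^P[\varphi(X_t)]$ for every $P\in\cR(x)$, and taking suprema gives $S_t(\psi)\le S_t(\varphi)$. For $c\in\bR$, $E^P[c]=c$ for all $P\in\cR(x)$, so $S_t(c)=c$. Finally, for $\psi,\varphi\in\mathcal H$ and $\lambda\ge 0$, $E^P[(\psi+\varphi)(X_t)]\le S_t(\psi)(x)+S_t(\varphi)(x)$ and $E^P[\lambda\psi(X_t)]=\lambda E^P[\psi(X_t)]$ for each $P\in\cR(x)$, whence $S_t(\psi+\varphi)\le S_t(\psi)+S_t(\varphi)$ and $S_t(\lambda\psi)=\lambda S_t(\psi)$. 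Together with the first step this shows $S_t\colon\mathcal H\to\mathcal H$ is a monotone, constant-preserving sublinear operator.

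\emph{Semigroup property.} For $t=0$, the identity $P(X_0=x)=1$ for every $P\in\cR(x)$ gives $S_0(\psi)(x)=\cE^x(\psi(X_0))=\psi(x)$, i.e.\ $S_0=\operatorname{id}$. For $s,t\in\bR_+$ the flow identity $X_{s+t}=X_t\circ\theta_s$ on $\Omega$ yields $\psi(X_{s+t})=(\psi(X_t))\circ\theta_s$, and applying Proposition~\ref{prop: markov property} to the bounded upper semianalytic function $\omega\mapsto\psi(X_t(\omega))$ gives
\[
S_{s+t}(\psi)(x)=\cE^x\big((\psi(X_t))\circ\theta_s\big)=\cE^x\big(\cE^{X_s}(\psi(X_t))\big)=\cE^x\big(S_t(\psi)(X_s)\big)=S_s\big(S_t(\psi)\big)(x),
\]
where the third equality uses $\cE^y(\psi(X_t))=S_t(\psi)(y)$ and the fact established in the first step that $S_t(\psi)\in\mathcal H$, so that $S_t(\psi)(X_s)$ is again upper semianalytic (and bounded). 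Exchanging the roles of $s$ and $t$ shows $S_{s+t}=S_tS_s=S_sS_t$, which completes the verification.
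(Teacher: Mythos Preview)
Your proof is correct and follows the same approach as the paper, which does not give a detailed argument but simply states that the result is a consequence of Proposition~\ref{prop: markov property} and refers to \cite[Remark~4.33]{hol16}. You have spelled out precisely the steps implicit in that reference: the well-definedness via analyticity of the graph of $\cR$ (which rests on Standing Assumption~\ref{SA: MG} and is established in the cited prior work), the elementary verification of sublinearity, monotonicity and preservation of constants, and the semigroup identity $S_{s+t}=S_sS_t$ from the nonlinear Markov property together with $X_{s+t}=X_t\circ\theta_s$.
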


It is well-known that linear semigroups on spaces of sufficiently regular functions can be uniquely characterized by their infinitesimal (pointwise) generators and that they admit a stochastic representation via a Markov family. In the following section, we provide a similar result for nonlinear Markov semigroups.

\section{The Stochastic Representation Theorem} \label{sec: main results}

The goal of this section is to show that \((S_t)_{t \in \bR_+}\) is the only jointly continuous\footnote{Here, jointly continuous means that \((t, x) \mapsto S_t (\psi) (x)\) is continuous for all \(\psi \in C_b(\bR^d;\bR)\).} sublinear semigroup on \(C_b(\bR^d; \bR)\) whose pointwise generator is given by the formula 
\begin{equation} \begin{split} \label{eq: G}
    G(x, \phi) := &\, \sup  \Big\{ \langle \nabla \phi (x), b (f, x) \rangle
    + \tfrac{1}{2} \on{tr} \big[ \nabla^2\phi (x) \sigma \sigma^* (f, x) \big]
    \\& \ \, + \int\, \big[ \phi(x + \bk (f, x, z)) - \phi(x) - \langle \nabla \phi(x) , h (k (f, x, z)) \rangle\big]\, \n (f, dz)
    \colon f \in F \Big\}.
\end{split}
\end{equation}
The proof of this result is based on viscosity methods. Namely, under suitable conditions, we identify the map \((t, x) \mapsto S_t (\psi) (x)\) as the unique viscosity solution to the equation 
\begin{equation} \label{eq: G eq}
\begin{cases}   
\partial_t u (t, x) - G (x, u (t, \, \cdot\, )) = 0, & \text{for } (t, x) \in (0, \infty) \times \mathbb{R}^d, \\
u (0, x) = \psi (x), & \text{for } x \in \bR^d.
\end{cases}
\end{equation}
In particular, we establish the continuity of \((t, x) \mapsto S_t (\psi) (x)\) for any \(\psi \in C_b (\bR^d; \bR)\), which entails the \(C_b\)--Feller property of \((S_t)_{t \in \bR_+}\), i.e., \(S_t (C_b (\bR^d;\bR) ) \subset C_b (\bR^d; \bR)\) for any \(t \in \bR_+\).

For any jointly continuous semigroup \((T_t)_{t \in \bR_+}\) with pointwise generator \(G\), we can adapt results from \cite{hol16} to show that \((t, x) \mapsto T_t (\psi) (x)\) solves \eqref{eq: G eq} in a viscosity sense. In turn, the uniqueness result implies that \(S_t (\psi) (x) = T_t (\psi) (x)\), which is the desired representation of the semigroup.

\smallskip
The remainder of this section is split into two parts. First, we establish the relation of \((S_t)_{t \in \bR_+}\) to the equation \eqref{eq: G eq} and second, we deduce the stochastic representation for semigroups with pointwise generator \(G\).

\subsection{The Generator Equation and the \(C_b\)--Feller Property}
Recall that a function \(u \colon \bR_+ \times \bR^d \to \mathbb{R}\) is said to be a \emph{viscosity subsolution} to \eqref{eq: G eq} if it is upper semicontinuous and the following two properties hold:
\begin{enumerate}
    \item[\textup{(a)}] \(u(0, \cdot\, ) \leq \psi\);
\item[\textup{(b)}]
\(
\partial_t \phi (t, x) - G (x, u(t, \cdot\,)) \leq 0
\)
for all \(\phi \in  C^\infty_b(\bR_+ \times \bR^d; \bR)\) such that \(u \leq \phi\) and \(\phi (t, x) = u(t, x)\) for some \((t, x) \in (0, \infty) \times \bR^d \). 
\end{enumerate}
A \emph{viscosity supersolution} is obtained by replacing upper with lower semicontinuity and reversing the inequalities. Further, \(u\) is called \emph{viscosity solution} if it is both, a viscosity sub- and supersolution. 

\smallskip
For the remainder of this section, fix \(\psi \in C_b (\bR^d; \bR)\) and set 
\[
v (t, x) := S_t (\psi )(x), \quad (t, x) \in \bR_+ \times \bR^d.
\]
We consider the upper and lower envelopes of \(v\), i.e., for \((t, x) \in \bR_+ \times \bR^d\),
\[
v^* (t, x) := \limsup_{(s, y) \to (t, x)} v (s, y), \qquad v_* (t, x) := \liminf_{(s, y) \to (t, x)} v (s, y).
\]
Notice that \(v^*\) is upper semicontinuous and that \(v_*\) is lower semicontinuous. 

\smallskip
Before we can state our main theorem, we formulate its prerequisites. 
To this end, let us define the \emph{symbol} associated to the coefficients \((b, \sigma\sigma^*, \n \o k^{-1})\) by 
\begin{equation} \label{eq: df symbol}
\begin{split}
\q(f, x, \xi)  := -i \langle b&(f,x), \xi \rangle + \frac{1}{2} \langle \xi, \sigma \sigma^*(f,x) \xi \rangle \\ &+ 
\int \big(1 - e^{i \langle y, \xi \rangle} + i \langle \xi, h(y) \rangle \big) (\n (f, \, \cdot\,) \o k(f,x, \cdot\,)^{-1})(dy),
\end{split}
\end{equation}
for \(x, \xi \in \bR^d\) and \(f \in F\). Here, \(i\) denotes the imaginary number.

\begin{condition} \label{cond: V1}
For every compact set \(K \subset \bR^d\),  
\begin{enumerate}
        \item[\textup{(i)}]
        \begin{align*}
           \sup \Big\{ \| b (f, x) \| + \|\sigma (f, x) \| \colon (f, x) \in F \times K \Big\}  < \infty,
          \end{align*}

        \item[\textup{(ii)}] the symbol \(\q\) is uniformly continuous, i.e.,
         \[
         \lim_{r \to \infty} \sup_{f \in F} \sup_{\| x\| \leq r} \sup_{\| \xi \| \leq r^{-1}}    | \q(f,x,\xi) | =0,
         \]

        \item[\textup{(iii)}] and there exists a Borel function \(\gamma = \gamma\, (K) \colon L \to \bR_+\) such that   \[\sup_{f \in F} \int (\gamma^2(z) \wedge 1)\, \n (f, dz) < \infty\] 
  and 
\begin{align*}
    \lim_{\kappa \searrow 0} \, \sup_{f \in F} \int_{\{\gamma\leq \kappa\}} \gamma^2 (z)\, \n (f, dz) &= 0, \quad
    \lim_{R \to \infty} \, \sup_{f \in F} \n \, (f, \{\gamma > R\}) = 0,
\end{align*} and a modulus of continuity\footnote{A modulus of continuity is an increasing function that is vanishing and continuous at zero.} \(\ka = \ka\, (K) \colon [0, \infty] \to [0, \infty]\) such that 
      \begin{align*}
          \|b (f, x) - b (f, y)\| + \|\sigma \sigma^* (f, x) - \sigma \sigma^* (f, y) \| &\leq \ka\, (\|x - y\|), \\
          \|k (f, x, z) - k (f, y, z)\| &\leq \gamma (z)\, \ka\, (\|x - y\|),
          \\ \|k (f, x, z) \| &\leq \gamma (z)
      \end{align*}
      for all \(f \in F\), \(x, y \in K\) and \(z \in L\).
\end{enumerate}
\end{condition}

\begin{remark}[Discussion of Condition \ref{cond: V1}] \quad
\begin{itemize} 
\item[\textup{(i)}] Notice that part (iii) of Condition \ref{cond: V1} yields that 
    \begin{align*}
        \sup \Big\{ \int ( \|k (f, x, z)\|^2 \wedge 1 ) \, \n (f, dz) \colon (f, x) \in F \times K \Big\}  < \infty,
    \end{align*}
    for every compact set \(K \subset \bR^d\).
    
\item[\textup{(ii)}] 
Let us comment on part (ii) of Condition \ref{cond: V1}. 
Note that it implies the following seemingly stronger condition 
\[
\lim_{r \to \infty} \sup_{f \in F}  \sup_{\| x\| \leq \lambda r} \sup_{\| \xi \| \leq r^{-1}} | \q(f,x,\xi) | =0, \quad \lambda \geq 1.
\]
To see this, recall that for fixed \((f,x) \in F \times \bR^d\) the function \( \xi \mapsto \sqrt{|\q(f,x,\xi)|}\) is subadditive (\cite[Proposition 2.17]{schilling}).
Part (ii) of Condition \ref{cond: V1} holds, for instance, in case the coefficients \((b, \sigma \sigma^*, \n \o k^{-1})\) satisfy the following two conditions:
\begin{enumerate}
    \item[\textup{(a)}] the coefficients are globally bounded in the sense that
\[
\sup \Big\{ \| b (f, x) \| + \|\sigma (f, x) \| + \int ( \|k (f, x, z)\|^2 \wedge 1 ) \, \n (f, dz) \colon (f, x) \in F \times \bR^d \Big\}  < \infty;
\]
\item[\textup{(b)}] the coefficients satisfy the tightness condition 
\[
\lim_{R \to \infty} \, \sup \big\{ \n\, ( f, \{\| k (f, x, \, \cdot\, )\| > R\} ) \colon (f, x) \in F \times \bR^d \big\} = 0.
\]
\end{enumerate}
Indeed, under these assumptions, Condition \ref{cond: V1} (ii) follows from \cite[Lemma~A.2]{K19}.
\end{itemize}
\end{remark}

The following result shows that all semimartingale problems for constant controls can be solved if Condition \ref{cond: V1} is in force. In particular, it implies Standing Assumption~\ref{SA: non empty}.

\begin{proposition} \label{prop: existence}
     Suppose that Condition \ref{cond: V1} holds. Then, for every \(f \in F\) and \(x \in \bR^d\), there exists a semimartingale law \(P \in \fPas\) such that \(P (X_0 = x) = 1\) and \((\llambda \otimes P)\)-a.e. 
    \[
    \Big( \frac{d B^P}{d \llambda}, \frac{d C^P}{d \llambda}, \frac{d \nu^P}{d \llambda} \Big) = \big(b (f, X), \sigma \sigma^* (f, X), \n (f, \, \cdot\, ) \o k (f, X, \, \cdot \,)^{-1} \big).
    \]
\end{proposition}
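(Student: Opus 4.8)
The plan is to fix $f \in F$ and $x \in \bR^d$ and construct the desired semimartingale law as the law of a solution to an SDE driven by a Brownian motion and a Poisson random measure, then identify its characteristics. Since $f$ is held constant throughout, write $\bar b(y) := b(f,y)$, $\bar\sigma(y) := \sigma(f,y)$, $\bar k(y,z) := k(f,x_0,z)$ evaluated... more precisely $\bar k(y,z) := k(f,y,z)$, and let $N := \n(f,\cdot)$ on $(L,\mathcal{B}(L))$. We seek a \cadlag process $Y$ with $Y_0 = x$ solving
\[
dY_t = \bar b(Y_{t-})\, dt + \bar\sigma(Y_{t-})\, dW_t + \int_L \big(\bar k(Y_{t-},z) - h(\bar k(Y_{t-},z))\big)(\mu - \nu)(dt,dz) + \int_L h(\bar k(Y_{t-},z))\, \mu(dt,dz),
\]
where $W$ is an $r$-dimensional Brownian motion and $\mu$ is a Poisson random measure on $\bR_+ \times L$ with intensity $\nu = \llambda \otimes N$. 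By the standard theory of SDEs with jumps (see, e.g., \cite{JS} or \cite{jacod79}), once a (possibly weak) solution exists, its law $P$ lies in $\fPs$, satisfies $P(X_0 = x) = 1$, and has absolutely continuous characteristics given $(\llambda \otimes P)$-a.e. by $(\bar b(X), \bar\sigma\bar\sigma^*(X), N \o \bar k(X,\cdot)^{-1})$, which is exactly the claimed identity (note $\nu^P / d\llambda = N(f,\cdot)\o k(f,X,\cdot)^{-1}$ because pushing forward the Poisson intensity by the jump map produces precisely this image measure, and the truncation-function bookkeeping in the last two integral terms is designed so the characteristic triple is computed with respect to $h$).

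The key steps, in order, are: (1) reduce to a fixed compact neighbourhood argument or a global one — here I would exploit the local nature of Condition \ref{cond: V1} together with a localization/patching argument, or more cleanly, first prove existence when the coefficients are \emph{globally} Lipschitz and satisfy the integrability bounds uniformly, then remove the global hypotheses by a standard localization (stopping the solution before it exits a ball $K$, solving on $K$ with truncated coefficients, and letting the ball grow, using that solutions do not explode — which follows from the linear-growth-type control implicit in Condition \ref{cond: V1}(i),(iii) and a Gronwall/martingale estimate); (2) for the globally-controlled case, invoke a fixed-point / Picard iteration in an $L^2$-type space on $[0,T]$, using the Lipschitz estimates
$\|\bar b(y) - \bar b(y')\| + \|\bar\sigma\bar\sigma^*(y) - \bar\sigma\bar\sigma^*(y')\| \le \ka(\|y-y'\|)$ and $\|\bar k(y,z) - \bar k(y',z)\| \le \gamma(z)\ka(\|y-y'\|)$ with $\sup_f \int (\gamma^2 \wedge 1)\, N < \infty$, which is exactly what is needed to bound the jump-integral term in the BDG/Itô isometry; alternatively, one can appeal directly to an off-the-shelf existence theorem for Markovian martingale problems such as those in \cite{criens19} or \cite{EbKa19}, checking that the stated hypotheses (boundedness on compacts, the symbol/tightness condition \ref{cond: V1}(ii),(iii)) match their assumptions. (3) Finally, identify the characteristics of $P := \mathrm{Law}(Y)$ via the semimartingale decomposition of $\varphi(Y)$ for $\varphi \in C^\infty_c$, confirming the triple and its absolute continuity, hence $P \in \fPas$; and observe that this in particular yields $P \in \cR(x)$ since $(\bar b(X), \bar\sigma\bar\sigma^*(X), N\o\bar k(X,\cdot)^{-1}) \in \Theta(X)$ pointwise, giving Standing Assumption \ref{SA: non empty}.

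The main obstacle I anticipate is handling the \emph{unboundedness} allowed by Condition \ref{cond: V1}: the hypotheses only control the coefficients on compacts, so a single global Picard iteration does not apply, and one must argue non-explosion carefully. The natural route is to introduce, for each $n$, coefficients $\bar b^{(n)}, \bar\sigma^{(n)}, \bar k^{(n)}$ that agree with the originals on $\{\|y\| \le n\}$ and are modified to be globally Lipschitz outside (e.g. by composing with a smooth cutoff), solve the resulting well-posed SDE to get $Y^{(n)}$, define $\tau_n := \inf\{t : \|Y^{(n)}_t\| \ge n\}$, check consistency $Y^{(n)} = Y^{(m)}$ on $[0, \tau_n \wedge \tau_m]$ by pathwise uniqueness, and then show $\tau_n \to \infty$ as $n \to \infty$. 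For the last point one needs an a priori moment bound $\sup_n E[\sup_{t \le T \wedge \tau_n} \|Y^{(n)}_t\|^2] < \infty$, which is not immediate because Condition \ref{cond: V1} does not a priori give a \emph{global} linear-growth bound. One workaround is to first establish the existence result under the additional global assumptions and state the localization as a separate lemma, or to note that the uniform-continuity-of-the-symbol hypothesis \ref{cond: V1}(ii) is precisely the kind of condition (cf. the tightness/growth conditions in \cite{K19, K21, criens19}) that rules out explosion and yields tightness of $(Y^{(n)})$, so that a subsequential weak limit exists and solves the martingale problem. I would take the latter, martingale-problem, route as the cleanest, deferring the technical tightness estimate to the cited literature.
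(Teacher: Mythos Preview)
Your constructive SDE route is a legitimate alternative, but the paper takes a much shorter path: it fixes $f$, considers the associated symbol $\q_f(x,\xi) := \q(f,x,\xi)$, verifies from Condition~\ref{cond: V1}(iii) (via dominated convergence) that the coefficients of $\q_f$ are continuous in~$x$ and locally bounded, and then applies an off-the-shelf existence theorem for L\'evy-type martingale problems due to K\"uhn \cite[Theorem~1.1]{franzi}, whose key hypothesis is precisely the uniform continuity of the symbol in Condition~\ref{cond: V1}(ii). The conversion from a martingale-problem solution to a semimartingale law in $\fPas$ is handled by \cite[Proposition~4.31]{EbKa19}. No SDE construction, localization, or Picard iteration is carried out by hand.

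Your proposal contains a genuine gap in step~(2): you refer to ``Lipschitz estimates'' and invoke a Picard iteration and pathwise uniqueness, but Condition~\ref{cond: V1}(iii) only supplies a \emph{modulus of continuity} $\ka$, not a Lipschitz constant. A contraction/Gronwall argument does not close with a general modulus, and the consistency step ``$Y^{(n)} = Y^{(m)}$ on $[0,\tau_n\wedge\tau_m]$ by pathwise uniqueness'' likewise fails without Lipschitz continuity. Your own final paragraph implicitly recognizes this and pivots to the correct remedy: argue \emph{weak} existence via tightness of approximating laws, with non-explosion controlled by the uniform-continuity-of-the-symbol condition (this is exactly the mechanism behind \cite{franzi}, cf.\ also the maximal inequality in Proposition~\ref{prop: maximal inequality}). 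That route works, but once you go there you are essentially reproducing the proof of the theorem the paper simply cites; the Picard/strong-solution narrative should be dropped rather than presented as the primary plan.
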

\begin{proof}
Fix \(f \in F\) and set \(\q_f(x, \xi) := \q(f, x, \xi)\) for \((x,\xi) \in \bR^d \times \bR^d\). This is the symbol associated to the constant control \(f\). Observe that under Condition \ref{cond: V1} the following holds:
\begin{itemize} \item[\textup{(i)}] the symbol \(\q_f\) has continuous coefficients, i.e.,
    the maps \(x \mapsto b(f,x)\) and \(x \mapsto \sigma(f,x)\) are continuous, and for every \(i,j =1,\dots,d\) and every continuous bounded function \(g \colon \bR^d \to \bR\) vanishing around the origin, the functions
        \begin{equation} \label{eq: continuity test}
         x \mapsto \int h^i(k(f,x,z)) h^j(k(f,x,z)) \, \n(f, dz), \quad x \mapsto \int g(k(f,x,z)) \, \n(f, dz)   
        \end{equation}
        are continuous, too;

    \item[\textup{(ii)}] and, as a consequence, the symbol \(\q_f\) is locally bounded, i.e., for every compact set \(K \subset \bR^d\),  
    \[
    \sup_{x \in K} \Big\{ \| b (f, x) \| + \|\sigma (f, x) \| +  \int ( \|k (f, x, z)\|^2 \wedge 1 ) \, \n (f, dz) \Big\}  < \infty;
    \]

\end{itemize}
Indeed, (i) follows from part (iii) of Condition \ref{cond: V1}. Specifically, continuity in \eqref{eq: continuity test} is a consequence of part (iii) of Condition \ref{cond: V1} and the dominated convergence theorem.
Of course, (ii) can also be deduced from (i) and (iii) of Condition \ref{cond: V1} without referring to the continuity of the coefficients of \(\q_f\).
Hence, as the symbol \(\q_f\) is uniformly continuous (in the sense of \cite{franzi}) by part (ii) of Condition \ref{cond: V1}, an application of \cite[Theorem 1.1]{franzi} in conjunction with \cite[Proposition 4.31]{EbKa19} completes the proof.
\end{proof}

\begin{remark} \label{rem: existence}
Let us mention that Proposition \ref{prop: existence} can be established under more general conditions. Using the same notation as in the proof of Proposition~\ref{prop: existence}, if the symbol \(\q_f\) has continuous coefficients in the sense of part (i) in the proof of Proposition \ref{prop: existence}, and additionally, the growth condition
        \[
        \limsup_{\|x\| \to \infty} \sup_{\|\xi \| \leq 1/ \|x\|} |\q_f(x, \xi)| < \infty,
        \]
        holds, then the conclusion of Proposition \ref{prop: existence} follows from \cite[Theorem 5.36]{EbKa19}.
     
\end{remark}

Finally, we state a condition that allows us to use a comparison theorem for viscosity solutions to Hamilton--Jacobi--Bellman PDEs. 
We emphasize that it implies Condition~\ref{cond: V1} above, see Lemma~\ref{lem: lemma cont modi drift} in Appendix~\ref{app: well-defined} for details.
\begin{condition} \label{cond: V3}
    Let \(\gamma \colon L \to \bR_+\) be a Borel map such that  \[\sup_{f \in F} \int (\gamma^2(z) \wedge 1)\, \n (f, dz) < \infty\] 
  and 
\begin{align*}
    \lim_{\kappa \searrow 0} \, \sup_{f \in F} \int_{\{\gamma\leq \kappa \}} \gamma^2 (z)\, \n (f, dz) &= 0, \quad
    \lim_{R \to \infty} \, \sup_{f \in F} \n \, (f, \{\gamma > R\}) = 0,
\end{align*} 
and define the modified drift coefficient \(\ob \colon F \times \bR^d \to \bR^d\) by
    \begin{equation*} \begin{split}
    \ob (f, x) := b (f, x) &+ \int_{\{\gamma \leq 1\}} \big( k (f, x, z) - h (k (f, x, z)) \big)\, \n (f, dz) \\&- \int_{\{\gamma > 1\}} h (k (f, x, z)) \, \n (f, dz).
    \end{split}
    \end{equation*}
    We formulate the following conditions, which also entail that \(\ob\) is well-defined (see Appendix~\ref{app: well-defined} for details). 
    
    \begin{enumerate}
        \item[\textup{(i)}] For every compact set \(K \subset \bR^d\), 
        \begin{align*}
           \sup \Big\{ \| b (f, x) \| + \|\sigma (f, x) \| \colon (f, x) \in F \times K \Big\}  < \infty.
        \end{align*}
        \item[\textup{(ii)}] The symbol \(q\) is uniformly continuous, i.e.,
         \[
         \lim_{r \to \infty} \sup_{f \in F} \sup_{\| x\| \leq r} \sup_{\| \xi \| \leq r^{-1}} | \q(f,x,\xi) | =0.
         \]     
         \item[\textup{(iii)}]
          There exists a constant \(C > 0\) such that 
          \begin{align*}
              \|\ob (f, x) - \ob (f, y) \| + \|\sigma (f, x) - \sigma (f, y) \| &\leq C \|x - y\|, \\
              \|k (f, x, z) - k (f, y, z) \| &\leq \gamma (z) \|x - y\|,
              \\\|k (f, x, z) \| &\leq \gamma (z) ( 1 + \|x\| ) 
          \end{align*}
          for all \(f \in F, x, y \in \bR^d\) and \(z \in L\).
    \end{enumerate}
\end{condition}

\begin{remark}
    In case \(b\) and \(k\) are such that
              \begin{align*}
              \| b(f, x) - b (f, y) \| &\leq C \|x - y\|, \\
              \|k (f, x, z) - k (f, y, z) \| &\leq \gamma (z) \|x - y\|
          \end{align*}
          for all \(f \in F, x, y \in \bR^d\) and \(z \in L\), then \(\ob\) satisfies its Lipschitz condition from Condition~\ref{cond: V3}~(iii) under each of the following conditions:
          \begin{enumerate}
              \item[\textup{(a)}] \(\sup_{f \in F} \int \gamma (z) \, \n (f, dz) < \infty\);
              \item[\textup{(b)}] \(\sup_{f \in F}\int (\gamma^2 (z) \wedge \gamma (z)) \, \n (f, dz) < \infty\) and 
              \[
              \|k (f, x, z)\| \leq \gamma (z) 
              \]
              for all \(f \in F, x \in \bR^d\) and \(z \in L\). 
          \end{enumerate}
        Here, it is easy to see that (a) suffices. To understand that \(\ob\) is Lipschitz continuous under hypothesis (b), notice that 
        \begin{equation} \label{eq: small gamma weg}
        \begin{split}
            \int_{\{\gamma \leq 1\}} \big( k (f, x, z) &- h (k (f, x, z)) \big)\, \n (f, dz) 
            \\&= \int_{\{\kappa < \gamma \leq 1\}} \big( k (f, x, z) - h (k (f, x, z)) \big)\, \n (f, dz),
        \end{split}
        \end{equation}
        for some \(\kappa > 0\) that satisfies \(h(y) = y\) for all \(\|y\| \leq \kappa\). Such an \(\kappa\) exists by definition of a truncation function. Further, the hypothesis \(\|k\| \leq \gamma\) yields \(\gamma > \kappa\) in case \(\|k\| > \kappa\). Hence, we get \eqref{eq: small gamma weg}, and consequently, the Lipschitz condition from Condition~\ref{cond: V3}~(iii), under (b). 
\end{remark}

We are in the position to formulate the main result of this section. It extends \cite[Lemma~2.38, Theorem~2.39]{CN22b} from a one-dimensional path-continuous setting to the present multidimensional setup that allows for jumps. 

\begin{theorem} \label{thm: main V}
    \quad
    \begin{enumerate}
        \item[\textup{(i)}] If Condition~\ref{cond: V1} holds, then \(v^*\) is a viscosity subsolution to the nonlinear PDE \eqref{eq: G eq}.
        \item[\textup{(ii)}] If Condition~\ref{cond: V1} holds, then \(v_*\) is a viscosity supersolution to the nonlinear PDE~\eqref{eq: G eq}.
        \item[\textup{(iii)}] If Condition \ref{cond: V3} holds, then \(v = v^* = v_*\) and \(v\) is the unique bounded viscosity solution to the nonlinear PDE \eqref{eq: G eq}.
    \end{enumerate}
\end{theorem}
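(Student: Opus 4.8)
Observe first that $|v(t,x)| = |\cE^x(\psi(X_t))| \le \|\psi\|_{\infty}$, so $v,v^*,v_*$ are bounded by $\|\psi\|_\infty$, and recall $v_* \le v \le v^*$ pointwise with $v^*$ upper and $v_*$ lower semicontinuous. The plan for \textup{(iii)} is to deduce it from \textup{(i)}, \textup{(ii)} and the comparison principle of Appendix~\ref{app: comparison}. Since Condition~\ref{cond: V3} implies Condition~\ref{cond: V1} (Lemma~\ref{lem: lemma cont modi drift}), parts \textup{(i)}, \textup{(ii)} give that $v^*$ is a bounded viscosity subsolution and $v_*$ a bounded viscosity supersolution of \eqref{eq: G eq} with $v^*(0,\cdot)\le\psi\le v_*(0,\cdot)$. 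Rewriting \eqref{eq: G eq} in L\'evy-type form with the modified drift $\ob$, Condition~\ref{cond: V3}~\textup{(iii)} supplies the global Lipschitz continuity of $(\ob,\sigma,k)$ required by the comparison theorem from \cite{hol16}, which yields $v^*\le v_*$; together with $v_*\le v\le v^*$ this forces $v = v^* = v_*$, so $v$ is continuous and, being both a sub- and a supersolution, the desired bounded viscosity solution. For uniqueness, any bounded viscosity solution $w$ is simultaneously a sub- and a supersolution, hence continuous with $w(0,\cdot)=\psi$, so comparing $w$ (as subsolution) with $v$ (as supersolution), and conversely, gives $w = v$.

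It remains to establish \textup{(i)} and \textup{(ii)} under Condition~\ref{cond: V1}, for which I would follow the dynamic programming approach of \cite{CN22b, hol16}. I describe the subsolution claim \textup{(i)}; \textup{(ii)} is symmetric and indicated below. For the initial condition, Condition~\ref{cond: V1}~\textup{(i)},\textup{(iii)} bound the semimartingale characteristics of every $P\in\cR(y)$ uniformly for $y$ near a fixed point $x$ and up to the exit time of a small ball around $x$, so a Lenglart/Aldous-type estimate yields the uniform tightness
\[
\lim_{h\downarrow 0}\ \limsup_{y\to x}\ \sup_{P\in\cR(y)} P\big(\textstyle\sup_{s\le h}\|X_s-y\|>\eps\big) = 0, \qquad \eps>0.
\]
Splitting $E^P[\psi(X_s)]$ over $\{\|X_s-y\|\le\eps\}$ and its complement and using continuity of $\psi$ gives $\limsup_{(s,y)\to(0,x)} v(s,y)\le\sup_{\|z-x\|\le\eps}\psi(z)$, hence $v^*(0,x)\le\psi(x)$ after $\eps\downarrow0$; the matching bound $v_*(0,x)\ge\psi(x)$ follows the same way from a fixed constant-control law $P^{f_0}_y\in\cR(y)$ provided by Proposition~\ref{prop: existence}.

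For the interior inequality, fix $\phi\in C^\infty_b(\bR_+\times\bR^d)$ with $v^*\le\phi$ and $\phi(t_0,x_0)=v^*(t_0,x_0)$ at some $(t_0,x_0)\in(0,\infty)\times\bR^d$, pick $(s_n,y_n)\to(t_0,x_0)$ with $v(s_n,y_n)\to v^*(t_0,x_0)$ (so that $\phi(s_n,y_n)-v(s_n,y_n)\to0$), and suppose for contradiction that $\partial_t\phi(t_0,x_0)$ strictly exceeds the corresponding value of $G$ (in the formulation with $\phi$ in the local part and in the small jumps, and $v^*$ in the large jumps). With $\tau=\tau_n:=h\wedge\inf\{r:X_r\notin\overline{B}(x_0,\rho)\}$ for small $\rho>0$ and $0<h<t_0/2$, the dynamic programming principle (the stopping-time version of the nonlinear Markov property, Proposition~\ref{prop: markov property}) gives, for large $n$,
\[
v(s_n,y_n)=\sup_{P\in\cR(y_n)}E^P\big[v(s_n-\tau,X_\tau)\big]\le\sup_{P\in\cR(y_n)}E^P\big[\phi(s_n-\tau,X_\tau)\big],
\]
and It\^o's formula applied to $(r,x)\mapsto\phi(s_n-r,x)$ on $[0,\tau]$ under each $P\in\cR(y_n)$ — noting that for a.e.\ $r<\tau$ one has $X_r\in\overline{B}(x_0,\rho)$ and that the pointwise generator of $X$ along the path at $X_r$ is dominated by $G(X_r,\phi(s_n-r,\cdot))$, by definition of $G$ as a supremum over $\Theta$ — yields
\[
v(s_n,y_n)-\phi(s_n,y_n)\le\sup_{P\in\cR(y_n)}E^P\Big[\int_0^\tau\big(G(X_r,\phi(s_n-r,\cdot))-\partial_t\phi(s_n-r,X_r)\big)\,dr\Big].
\]
Under Condition~\ref{cond: V1} the map $x\mapsto G(x,\phi(s,\cdot))$ is continuous on $\overline{B}(x_0,\rho)$, uniformly in $s$ near $t_0$ (here the uniform integrability of $\gamma$ in Condition~\ref{cond: V1}~\textup{(iii)} is used, after decomposing the integro-differential operator at jump radius $\delta$, controlling the small-jump part via $\|\nabla^2\phi\|_\infty$ and $\sup_f\int_{\{\gamma\le\kappa\}}\gamma^2\,\n(f,dz)\to0$, and retaining $v^*$ in the large-jump part for $\delta\downarrow0$). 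Shrinking $\rho,h$ and taking $n$ large then makes the integrand strictly negative on $[0,\tau)$, so the right-hand side is bounded above by $-\eta\inf_{P}E^P[\tau]\le-\eta c_h h<0$ for some $\eta>0$, with $c_h\to1$ as $h\downarrow0$ (again by the tightness estimate). Letting $n\to\infty$ forces the left-hand side to vanish, a contradiction; hence \textup{(i)}. Part \textup{(ii)} is entirely analogous, using a minimizing sequence and, in place of the supremum over $\cR(y_n)$, the constant-control law $P^f_{y_n}\in\cR(y_n)$ from Proposition~\ref{prop: existence} for a control $f\in F$ almost attaining $G(x_0,\phi(t_0,\cdot))$, together with the continuity of the $f$-generator in $(r,x)$ near $(0,x_0)$ established as in the proof of Proposition~\ref{prop: existence}.

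I expect the main obstacle to be the verification of \textup{(i)} and \textup{(ii)} under the weak hypotheses of Condition~\ref{cond: V1}: since the coefficients may be unbounded and $F$ need not be compact, the argument must be localized via the stopping time $\tau$ and controlled through uniform tightness estimates, and the non-local nature of $G$ forces the small-/large-jump decomposition together with the uniform integrability controls on $\gamma$ in order to obtain the viscosity inequality in the precise form (small jumps through the test function, large jumps through $v^*$, resp.\ $v_*$) demanded by the comparison theorem. Once Condition~\ref{cond: V1}, and for \textup{(iii)} Condition~\ref{cond: V3}~\textup{(iii)}, are available, the comparison result of \cite{hol16} enters essentially as a black box.
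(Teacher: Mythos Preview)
Your treatment of \textup{(iii)} matches the paper's: once \textup{(i)} and \textup{(ii)} are in hand, Condition~\ref{cond: V3} (which implies Condition~\ref{cond: V1}) feeds into the comparison principle of Theorem~\ref{theo: comparison}, forcing $v^*\le v_*$ and hence $v=v^*=v_*$ is the unique bounded viscosity solution.

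For \textup{(i)} and \textup{(ii)} your outline is in the same spirit as the paper's Lemmata~\ref{lem: en sub} and~\ref{lem: en sup} (approximating sequence $(t^n,x^n)\to(t^0,x^0)$, It\^o's formula up to an exit time from a ball, constant-control law from Proposition~\ref{prop: existence} for the supersolution side), but there is one genuine gap. You invoke a \emph{stopping-time} dynamic programming principle,
\[
v(s_n,y_n)=\sup_{P\in\cR(y_n)}E^P\big[v(s_n-\tau,X_\tau)\big],
\]
and attribute it to Proposition~\ref{prop: markov property}. That proposition, however, is only the nonlinear Markov property at \emph{deterministic} times; a DPP at stopping times in this sublinear setting requires an additional measurable-selection/pasting argument and is not supplied here. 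The paper sidesteps this entirely: it uses the semigroup identity at a deterministic time $u$,
\[
0=\sup_{P\in\cR(x^n)}E^P\big[v(t^n-u,X_u)\big]-v(t^n,x^n),
\]
replaces $v$ by $\phi\ge v^*$, applies It\^o only on $[0,u\wedge\tau^n_r]$, and then controls the difference between $E^P[\phi(t^n-u,X_u)]$ and $E^P[\phi(t^n-(u\wedge\tau^n_r),X_{u\wedge\tau^n_r})]$ by the symbol-based maximal inequality (Proposition~\ref{prop: maximal inequality} and Corollary~\ref{cor: maximal inequality}), which is exactly where part~\textup{(ii)} of Condition~\ref{cond: V1} enters. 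Your ``Lenglart/Aldous-type'' tightness claim plays the analogous role, but the paper's route through the symbol gives the needed uniformity in $x^n$ and $P$ directly and avoids both the stopping-time DPP and a lower bound of the form $\inf_P E^P[\tau]\ge c_h h$, which you would still have to justify.

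Two smaller differences worth noting: the paper argues by direct estimates on the It\^o remainder terms $J^n_1,\dots,J^n_5$ rather than by contradiction, and it establishes the viscosity inequality with the test function $\phi$ inserted throughout $G$ (i.e., $\partial_t\phi(t^0,x^0)\le G(x^0,\phi(t^0,\cdot))$), not in the Barles--Imbert split form with $v^*$ retained in the large-jump part that you mention. The latter is the form actually used in the comparison Theorem~\ref{theo: comparison}, so if you pursue your variant you should make sure the two notions match.
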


Once (i) and (ii) are established, part (iii) of Theorem~\ref{thm: main V} follows from a comparison theorem for Hamilton--Jacobi--Bellman PDEs that was proved in \cite{hol16}, see Theorem~\ref{theo: comparison} for a formulation tailored to our setting. 
The following is a direct corollary to Theorem~\ref{thm: main V}.

\begin{corollary} \label{cor: c_b feller}
    Suppose that Condition \ref{cond: V3} holds. Then, for every \(\psi \in C_b (\bR^d; \bR)\), the map \((t, x) \mapsto S_t (\psi) (x)\) is continuous. In particular, the semigroup \((S_t)_{t \in \bR_+}\) has the \(C_b\)--Feller property. 
\end{corollary}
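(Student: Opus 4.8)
The plan is to read the corollary off directly from Theorem~\ref{thm: main V}, so the proof will be short. First I would record that Condition~\ref{cond: V3} implies Condition~\ref{cond: V1} --- this is exactly the content of Lemma~\ref{lem: lemma cont modi drift} in Appendix~\ref{app: well-defined} --- so that all three parts of Theorem~\ref{thm: main V} are available under the hypothesis of the corollary. Fixing \(\psi \in C_b(\bR^d;\bR)\) and writing \(v(t,x) := S_t(\psi)(x)\) as in the theorem, part~(iii) yields the identity \(v = v^* = v_*\) on \(\bR_+ \times \bR^d\).

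Next I would extract joint continuity from this identity. By the very definition of the upper and lower envelopes one always has \(v_*(t,x) \le v(t,x) \le v^*(t,x)\) for every \((t,x) \in \bR_+ \times \bR^d\), while \(v^*\) is upper semicontinuous and \(v_*\) is lower semicontinuous. Hence the common function \(v = v^* = v_*\) is simultaneously upper and lower semicontinuous, i.e.\ continuous on \(\bR_+ \times \bR^d\). This is precisely the asserted joint continuity of \((t, x) \mapsto S_t(\psi)(x)\).

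Finally I would deduce the \(C_b\)--Feller property. For fixed \(t \in \bR_+\) the section \(x \mapsto S_t(\psi)(x) = v(t,x)\) is continuous by the previous step, and it is bounded because \(S_t\) is monotone and preserves constants, so that \(\inf_{\bR^d}\psi \le S_t(\psi) \le \sup_{\bR^d}\psi\) and therefore \(\| S_t(\psi) \|_\infty \le \| \psi \|_\infty < \infty\). Consequently \(S_t(\psi) \in C_b(\bR^d;\bR)\), i.e.\ \(S_t(C_b(\bR^d;\bR)) \subset C_b(\bR^d;\bR)\), which is the \(C_b\)--Feller property. There is essentially no obstacle here beyond invoking Theorem~\ref{thm: main V}; the only mild point worth stating explicitly is that the boundedness of \(v\) --- needed to phrase the conclusion in \(C_b\) and consistent with the "bounded viscosity solution" clause in Theorem~\ref{thm: main V}(iii) --- comes from the monotonicity and constant-preservation axioms of the sublinear Markovian semigroup rather than from the PDE itself.
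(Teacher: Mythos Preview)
Your proposal is correct and follows exactly the intended route: the paper simply states that this is a direct corollary of Theorem~\ref{thm: main V} without spelling out any details, and your argument (from \(v = v^* = v_*\) conclude continuity, then note boundedness) is precisely how one reads it off.
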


The uniform continuity property of the symbol entailed in Condition \ref{cond: V3} allows us to conclude the next result.
Let \( C_0 (\bR^d; \bR)\)  be the space of continuous functions \(\phi \colon \bR^d \to \bR\) that are vanishing at infinity in the sense that \(|\phi(x)| \to 0\) as \(\|x \| \to \infty\).

\begin{corollary} \label{cor: c_0 feller}
    Suppose that Condition \ref{cond: V3} holds. Then, the semigroup \((S_t)_{t \in \bR_+}\) has the \(C_0\)--Feller property, i.e., 
   \(S_t (C_0 (\bR^d;\bR) ) \subset C_0 (\bR^d; \bR)\) for any \(t \in \bR_+\).
\end{corollary}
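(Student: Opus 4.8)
The plan is to combine the continuity of $x\mapsto S_t(\psi)(x)$, which is already provided by Corollary~\ref{cor: c_b feller}, with a decay-at-infinity estimate obtained from a maximal inequality that is \emph{uniform over} $\bigcup_{x\in\bR^d}\cR(x)$ and that is driven by the uniform continuity of the symbol $\q$ from Condition~\ref{cond: V3}(ii). First I would reduce the claim: given $\psi\in C_0(\bR^d;\bR)$ and $\eps>0$, choose $R>0$ with $|\psi|<\eps$ off $B(0,R)$; then for $\|x\|>2R$ and $P\in\cR(x)$ one has $\{\|X_t\|\le R\}\subseteq\{\tau_x\le t\}$ with $\tau_x:=\inf\{s\ge0:\|X_s-x\|\ge\|x\|/2\}$, because $X_0=x$ forces $\|X_t-x\|\ge\|x\|-R>\|x\|/2$ on that event. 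Since $|S_t(\psi)(x)|\le\cE^x(|\psi(X_t)|)\le\|\psi\|_\infty\,\sup_{P\in\cR(x)}P(\tau_x\le t)+\eps$, it suffices to show $\sup_{P\in\cR(x)}P(\tau_x\le t)\to0$ as $\|x\|\to\infty$.

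For the maximal inequality I would fix a cutoff $\phi\in C^\infty_c(\bR^d;[0,1])$ with $\phi(0)=1$ and $\on{supp}\phi\subset B(0,1)$, and for $r>0$ test the characteristics against $u_{x,r}:=1-\phi((\,\cdot\,-x)/r)$, which is in $C^2_b(\bR^d;\bR)$ and satisfies $u_{x,r}(x)=0$, $0\le u_{x,r}\le1$, and $u_{x,r}\equiv1$ off $B(x,r)$. Writing $\cA^f$ for the integro-differential operator carried by the symbol $\q(f,\,\cdot\,,\,\cdot\,)$, i.e.\ the supremand in \eqref{eq: G} for the constant control $f$ (so $G(x,\,\cdot\,)=\sup_{f\in F}\cA^f$), the key pointwise bound to establish is
\[
\sup_{f\in F}|\cA^f u_{x,r}(y)|\ \le\ C_\phi\,\sup_{f\in F}\,\sup_{\|\xi\|\le 1/r}|\q(f,y,\xi)|,\qquad y\in\bR^d,
\]
with $C_\phi<\infty$ depending only on $\phi$ and $d$. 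This follows from the pseudodifferential representation $\cA^f g(y)=-(2\pi)^{-d}\int e^{i\langle y,\xi\rangle}\q(f,y,\xi)\widehat g(\xi)\,d\xi$ for Schwartz $g$, the identity $\cA^f1=0$, the Fourier scaling $\widehat{\phi((\,\cdot\,-x)/r)}(\xi)=r^d e^{-i\langle x,\xi\rangle}\widehat\phi(r\xi)$, and the at most quadratic growth of $\xi\mapsto|\q(f,y,\xi)|$, itself a consequence of the subadditivity of $\xi\mapsto\sqrt{|\q(f,y,\xi)|}$ (\cite[Proposition~2.17]{schilling}); finiteness of $C_\phi$ uses $\widehat\phi\in\mathcal S(\bR^d)$.

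On the probabilistic side, for $P\in\cR(x)$ and $u\in C^2_b(\bR^d;\bR)$, Itô's formula together with the canonical description of semimartingale characteristics shows that $u(X)-u(X_0)-\int_0^\cdot\cA_s u(X_{s-})\,ds$ is a local $P$-martingale, where $\cA_s$ is the operator built from the differential characteristics of $X$ at time $s$; since these lie in $\Theta(X_s)$, for $\llambda$-a.e.\ $s$ they coincide with $(b(f,X_s),\sigma\sigma^*(f,X_s),\n(f,\,\cdot\,)\o k(f,X_s,\,\cdot\,)^{-1})$ for some $f\in F$, whence $\cA_s u(X_{s-})=\cA^f u(X_s)$ for $\llambda$-a.e.\ $s$. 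Applying this with $u=u_{x,r}$ and $r:=\|x\|/2$ and stopping at $\tau_x$: on $[0,\tau_x]$ one has $\|X_{s-}\|\le\tfrac32\|x\|$, so by the displayed bound the drift integrand is bounded in absolute value by $C_\phi\,\delta(\|x\|)$, where
\[
\delta(\rho):=\sup_{f\in F}\ \sup_{\|y\|\le 3\rho/2}\ \sup_{\|\xi\|\le 2/\rho}|\q(f,y,\xi)|;
\]
the stopped local martingale is then bounded, and optional stopping gives $E^P[u_{x,r}(X_{t\wedge\tau_x})]=u_{x,r}(x)+E^P[\int_0^{t\wedge\tau_x}\cA_s u_{x,r}(X_{s-})\,ds]\le t\,C_\phi\,\delta(\|x\|)$. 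Since $u_{x,r}(X_{t\wedge\tau_x})\ge\1_{\{\tau_x\le t\}}$ — at the exit time the path has reached $\{\|\,\cdot\,-x\|\ge r\}$, where $u_{x,r}\equiv1$ — this yields $P(\tau_x\le t)\le t\,C_\phi\,\delta(\|x\|)$ for all $P\in\cR(x)$.

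Finally one checks $\delta(\rho)\to0$ as $\rho\to\infty$: the dilation factor $\tfrac32$ in the spatial ball is absorbed by the reformulation of Condition~\ref{cond: V3}(ii) recorded in the discussion following Condition~\ref{cond: V1} (with dilation parameter $\tfrac32$), and the factor $2$ in the frequency ball by subadditivity of $\xi\mapsto\sqrt{|\q(f,y,\xi)|}$, so that $\sup_{\|\xi\|\le 2/\rho}|\q(f,y,\xi)|\le 4\sup_{\|\xi\|\le 1/\rho}|\q(f,y,\xi)|$. Hence $\limsup_{\|x\|\to\infty}|S_t(\psi)(x)|\le\eps$, and letting $\eps\downarrow0$, together with the continuity from Corollary~\ref{cor: c_b feller}, gives $S_t(\psi)\in C_0(\bR^d;\bR)$; for $t=0$ the statement is trivial. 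The step I expect to be the main obstacle is the maximal inequality — specifically, the symbol estimate with a constant uniform in $f$, $y$ and $x$, and the legitimate passage from the local-martingale identity to the stopped-expectation bound; the bookkeeping between $X_{s-}$ and $X_s$ inside the jump term and the a.s.\ finiteness of the relevant Lebesgue integrals on $[0,\tau_x]$ are routine but should be handled with care. The reduction and the limit $\delta(\rho)\to0$ are then immediate consequences of Condition~\ref{cond: V3}(ii).
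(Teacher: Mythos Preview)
Your proposal is correct and follows essentially the same route as the paper: reduce to continuity via Corollary~\ref{cor: c_b feller}, split $|\psi(X_t)|$ according to whether $X_t$ stays far from the origin, bound the bad event by a maximal inequality in terms of the symbol, and conclude via the uniform continuity in Condition~\ref{cond: V3}(ii). The only difference is structural: the paper invokes the maximal inequality as the separately stated Proposition~\ref{prop: maximal inequality} (taken from \cite[Proposition~5.1]{K19}), whereas you re-derive it inline via the cutoff/Fourier/It\^o argument, which is precisely how such bounds are proved in \cite{K19,schilling}.
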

\begin{proof}
	First of all, Theorem~\ref{thm: main V} yields that \(S_t (C_0(\bR^d; \bR)) \subset C_b (\bR^d; \bR)\). In turn, it suffices to prove that \(S_t (\psi)\) vanishes at infinity for every \(\psi \in C_0 (\bR^d;\bR)\) and \(t > 0\).
    We adapt the argument from \cite[Proposition 4.4]{schilling_symbol}.
    Let \(\psi \in C_0(\bR^d;\bR)\) and \(\varepsilon  > 0\).
    Since \(\psi\) is vanishing at infinity, there exists an \(r > 0\) such that \(|\psi(y)| \leq \varepsilon\) for \(\|y\| \geq r\).
    Let \(x \in \bR^d\) with \(\|x \| \geq 2 r\), and let \(P \in \cR(x)\). For \(t > 0\), we obtain 
    \begin{align*}
       E^P \big[ \big| \psi(X_t) \big| \big] & \leq \| \psi \|_\infty P \big( \|X_t\| > r\big) + \varepsilon.
    \end{align*}
    We proceed by bounding the first term on the right hand side. Notice that
    \begin{align*}
       P \big( \|X_t\| > r\big) &\leq P\big( \|X_t - x\| \geq \|x\|- r\big)
       \\& \leq P\Big( \sup_{s \in [0,t]} \|X_s - x\| \geq \|x\|/2\Big).
    \end{align*}
    Applying the maximal inequality (Proposition \ref{prop: maximal inequality} below) yields a constant \(C > 0\), independent of \(x \in \bR^d\), such that
    \begin{align*}
        P \big( \|X_t\| > r\big) & \leq C t\, \sup_{f \in F}  \sup_{\|y-x \| \leq \|x\|/2} \,\, \sup_{\|\xi\| \leq 2 / \|x\|} |\q(f,y,\xi)|
        \\& \leq C t \sup_{f \in F} \sup_{\|y\| \leq 3 \|x\|/2} \,\, \sup_{\|\xi\| \leq 2 / \|x\|} |\q(f,y,\xi)|.
    \end{align*}
    Taking the sup over \(P \in \cR(x)\) implies 
    \[
    | S_t(\psi)(x) | \leq C t \,\sup_{f \in F} \sup_{\|y\| \leq 3 \|x\|/2} \,\, \sup_{\|\xi\| \leq 2 / \|x\|} |\q(f,y,\xi)| + \varepsilon,
    \]
    for some constant \(C > 0\) that is independent of \(x \in \bR^d\).
    Hence, thanks to part (ii) of Condition \ref{cond: V3}, taking the limit \(\|x\| \to \infty\) yields
    \[
    \limsup_{\|x\| \to \infty} | S_t(\psi )(x) | \leq \varepsilon.
    \]
    As \(\varepsilon > 0\) was arbitrary, this completes the proof.
\end{proof}

In the following section, we proceed by investigating the relation of the operator \(G\) and the (pointwise) generator of \((S_t)_{t \in \bR_+}\).

\subsection{The Stochastic Representation Result}
For a sublinear Markovian semigroup \((T_t)_{t \in \bR_+}\) on a convex cone~\(\cH\) of functions from \(\bR^d\) into \(\bR\), its \emph{pointwise (infinitesimal) generator} \( A \colon  \cD(A) \to \cH \)  is defined by 
\begin{align*}
A(\phi)(x) & := \lim_{t \to 0} \frac{T_t(\phi)(x) - \phi(x)}{t}, \quad x \in \bR^d, \ \phi \in \cD(A), \\
\cD(A) &:= \Big\{ \phi \in \cH \colon \exists g \in \cH \text{ such that }  \lim_{t \to 0} \frac{T_t(\phi)(x) - \phi(x)}{t} = g(x) \ \ \forall x \in \bR^d \Big\}.
\end{align*}
Below, we will also use the following relaxation of the domain \(\cD(A)\). We define its \emph{pointwise domain} by 
\begin{align*}
\cD^p(A) := \Big\{ \phi \in \cH \colon \lim_{t \to 0} \frac{T_t(\phi)(x) - \phi(x)}{t} \ \text{ exists for all } x \in \bR^d \Big\}.
\end{align*}

For the sublinear semigroup \((S_t)_{t \in \bR_+}\) that is given by \eqref{eq: semigroup}, on a large class of test functions, the (pointwise) generator can be identified as the Hamilton--Jacobi--Bellman operator~\(G\) that is defined in \eqref{eq: G}.

\begin{proposition} \label{prop: generator}
Suppose that Condition \ref{cond: V1} holds.
Let \((S_t)_{t \in \bR_+}\) be the family of operators
defined in \eqref{eq: semigroup}. 
Then, for every \(\phi \in C_b^2(\bR^d;\bR)\) such that \(\nabla^2\phi \) is uniformly continuous,
we have 
\begin{align} \label{eq: G eq PG}
\lim_{t \to 0} \frac{S_t(\phi)(x) - \phi(x)}{t} = G(x, \phi), \quad x \in \bR^d.
\end{align}
\end{proposition}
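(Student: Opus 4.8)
The plan is to fix $\phi\in C_b^2(\bR^d;\bR)$ with $\nabla^2\phi$ uniformly continuous and show the difference quotient converges to $G(x,\phi)$ by establishing matching upper and lower bounds. The starting point is the standard identity for semimartingales: for each $P\in\cR(x)$, writing $(b^P,c^P,K^P)$ for the (Lebesgue-)densities of the characteristics of $X$ under $P$, It\^o's formula applied to $\phi(X_t)$ gives that
\[
M^P_t := \phi(X_t) - \phi(X_0) - \int_0^t \cL_s^P\phi \, ds
\]
is a local $P$-martingale, where $\cL_s^P\phi(\omega)$ is the generator action
\[
\langle \nabla\phi(X_{s-}), b^P_s\rangle + \tfrac12\on{tr}[\nabla^2\phi(X_{s-})c^P_s] + \int\big(\phi(X_{s-}+y)-\phi(X_{s-})-\langle\nabla\phi(X_{s-}),h(y)\rangle\big)K^P_s(dy).
\]
By Condition \ref{cond: V1} (i) and (iii) (in particular the bound $\|k\|\le\gamma$ together with $\sup_f\int(\gamma^2\wedge 1)\,\n(f,dz)<\infty$), the integrand $\cL_s^P\phi$ is bounded on a neighbourhood of $x$ uniformly in $P$ and in $f$; using that $\phi$ is bounded with bounded derivatives, a localization argument shows $M^P$ is a true martingale, so $E^P[\phi(X_t)] = \phi(x) + E^P\big[\int_0^t \cL_s^P\phi\,ds\big]$.

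For the upper bound, by definition of $\Theta(X)$ the density triple $(b^P_s,c^P_s,K^P_s)$ lies in $\Theta(X_{s-})$ for $(\llambda\otimes P)$-a.e.\ $(s,\omega)$, hence $\cL_s^P\phi(\omega)\le G(X_{s-}(\omega),\phi)$ pointwise. Therefore $\tfrac1t(S_t(\phi)(x)-\phi(x)) = \tfrac1t\sup_{P\in\cR(x)}E^P[\int_0^t\cL_s^P\phi\,ds] \le \tfrac1t E^P[\int_0^t G(X_{s-},\phi)\,ds]$ for every $P$; one then uses the uniform continuity of $\nabla^2\phi$ and Condition \ref{cond: V1} (iii) to show $x'\mapsto G(x',\phi)$ is continuous (each summand in the sup is continuous in $x'$ by dominated convergence, and the sup of equi-continuous functions is upper semicontinuous; actually full continuity follows because one can also bound $G(x',\phi)-G(x,\phi)$ from below by a modulus), and combine with the stochastic-continuity estimate $\sup_{P\in\cR(x)}P(\|X_s-x\|>\eps)\to 0$ as $s\to0$ (which follows from the maximal inequality, Proposition \ref{prop: maximal inequality}, and Condition \ref{cond: V1} (ii)) to get $\limsup_{t\to0}\tfrac1t\int_0^t E^P[G(X_{s-},\phi)]\,ds \le G(x,\phi)$, uniformly over $P$. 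This yields $\limsup_{t\to0}\tfrac1t(S_t(\phi)(x)-\phi(x))\le G(x,\phi)$.

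For the lower bound, fix $\eps>0$ and choose $f\in F$ with the bracketed expression in \eqref{eq: G} at the point $x$ exceeding $G(x,\phi)-\eps$. By Proposition \ref{prop: existence}, there is $P_f\in\cR(x)$ whose characteristics have densities $(b(f,X),\sigma\sigma^*(f,X),\n(f,\cdot)\o k(f,X,\cdot)^{-1})$, so $\cL_s^{P_f}\phi(\omega) = g_f(X_{s-}(\omega))$ where $g_f(x')$ is the bracketed integro-differential expression in \eqref{eq: G} evaluated at $x'$ with action $f$. Then $\tfrac1t(S_t(\phi)(x)-\phi(x))\ge \tfrac1t E^{P_f}[\int_0^t g_f(X_{s-})\,ds]$, and since $g_f$ is continuous at $x$ with $g_f(x)>G(x,\phi)-\eps$ — using Condition \ref{cond: V1} (iii) to get an equicontinuity-type bound on $g_f$ near $x$ that is \emph{uniform in $f$}, which is the crucial point for taking $t\to0$ before $\eps\to0$ — together with the same stochastic-continuity estimate, we obtain $\liminf_{t\to0}\tfrac1t(S_t(\phi)(x)-\phi(x))\ge g_f(x)-o(1) \ge G(x,\phi)-\eps$; letting $\eps\to0$ finishes the lower bound. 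The two bounds together give \eqref{eq: G eq PG}.

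The main obstacle is the interchange of limits implicit in both bounds: the supremum over $P\in\cR(x)$ (or the choice of near-optimal $f$) must be controlled uniformly as $t\to0$, which is exactly why Condition \ref{cond: V1} (iii) is phrased with a modulus of continuity $\ka$ and a function $\gamma$ that are uniform in $f\in F$. Concretely, one needs: (a) a bound $|g_f(x')-g_f(x)|\le \ka(\|x'-x\|) + (\text{modulus of continuity of }\nabla^2\phi)\cdot\|x'-x\| + (\text{jump remainder controlled by }\sup_f\int\gamma^2\wedge1)$ that does not depend on $f$, so that the jump part needs the dominated-convergence/tightness estimates in Condition \ref{cond: V1} (iii) ($\lim_{\kappa\searrow0}\sup_f\int_{\{\gamma\le\kappa\}}\gamma^2\n(f,dz)=0$ handles small jumps, $\lim_{R\to\infty}\sup_f\n(f,\{\gamma>R\})=0$ handles large ones); and (b) the quantitative stochastic-continuity estimate from the maximal inequality with a constant independent of $P$ and $f$, which is where Condition \ref{cond: V1} (ii) enters. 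Once these uniform estimates are in hand, the rest is a routine $\eps$-$\delta$ argument.
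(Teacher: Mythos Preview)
Your overall strategy is sound and is essentially what \cite[Lemma~6.1]{K19} proves; the paper's proof simply invokes that lemma after verifying, via Lemma~\ref{lem: non local 1}, the one hypothesis that is not immediate from Condition~\ref{cond: V1} (the uniform-in-$f$ modulus of continuity for the non-local part). In effect you unpack the black box, reproducing the same machinery the paper develops in Lemmas~\ref{lem: en sub} and~\ref{lem: en sup} (It\^o's formula, localization via exit times, continuity of $G$ from Lemma~\ref{lem: G cont}, and the maximal inequality from Corollary~\ref{cor: maximal inequality}), specialized to a time-independent test function.

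There is, however, one genuine gap in the write-up. You claim that ``a localization argument shows $M^P$ is a true martingale'' and then use the identity $E^P[\phi(X_t)] = \phi(x) + E^P\big[\int_0^t (\text{integrand})\,ds\big]$ without any stopping. Under Condition~\ref{cond: V1} the coefficients $b,\sigma$ are only \emph{locally} bounded (and the dominating function $\gamma = \gamma(K)$ depends on the compact set $K$), so the generator-type integrand need not be $P$-integrable on $[0,t]$ and $M^P$ is in general only a local martingale. What localization actually gives you is that the \emph{stopped} process $M^P_{\cdot\wedge\tau_r}$, with $\tau_r := \inf\{s\ge 0 : \|X_s - x\| \ge r\}$, is a true martingale. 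You must then carry the stopping throughout: write
\[
E^P[\phi(X_t)] - \phi(x) = E^P\Big[\int_0^{t\wedge\tau_r} (\text{integrand})\,ds\Big] + E^P\big[\phi(X_t) - \phi(X_{t\wedge\tau_r})\big],
\]
bound the second summand by $2\|\phi\|_\infty\, P(\tau_r \le t) \le t\, C_r$ via Corollary~\ref{cor: maximal inequality}~(ii) (with $C_r \to 0$ as $r\to\infty$), run your upper/lower-bound arguments for the first summand on the ball of radius $r$ around $x$ (where all coefficients are bounded and the continuity of $x'\mapsto G(x',\phi)$ applies), and let $r\to\infty$ only at the very end. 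This is precisely how the paper handles the analogous step in Lemmas~\ref{lem: en sub} and~\ref{lem: en sup}; once it is spelled out your argument goes through. (A minor aside: in the upper-bound display, ``$\tfrac1t\sup_P E^P[\ldots] \le \tfrac1t E^P[\int_0^t G(X_{s-},\phi)\,ds]$ for every $P$'' should of course carry a $\sup_P$ on the right-hand side as well.)
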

\begin{proof}
It suffices to verify the prerequisites of \cite[Lemma 6.1]{K19}. This will imply the claim. By virtue of Condition \ref{cond: V1} and Proposition \ref{prop: existence}, we need to prove the following assertion: for every compact set \(K \subset \bR^d\), and every \(g \in C^1_b(\bR^d; \bR)\) with \(|g(x)| \leq \|x\|^2 \wedge 1\), there exists a modulus of continuity \(\ka = \ka (g, K) \colon [0, \infty] \to [0, \infty]\) such that 
    \[
    \Big| \int (g (k (f, x, z)) - g (k (f, y, z))) \, \n (f, dz) \Big| \leq \ka (\|x - y\|)
    \]
    for all \(f \in F\) and \(x, y \in  K\).
This follows from Lemma \ref{lem: non local 1} below.    
\end{proof}

\begin{remark}
    By Corollary \ref{cor: c_b feller}, the family \((S_t)_{t \in \bR_+}\)
    defines a sublinear Markovian semigroup on \(C_b(\bR^d;\bR)\).
    Further, notice that \eqref{eq: G eq PG} implies that \(\phi\) is in the pointwise domain of the pointwise generator of \((S_t)_{t \in \bR_+}\).
\end{remark}

As in the linear case, the (pointwise) generator \(A\) of a sublinear Markovian semigroup \((T_t)_{t \in \bR_+}\) can be linked to an evolution equation that is given by 
\begin{equation} \label{eq: PDE A}
\begin{cases}   
\partial_t u (t, x) - A(u(t, \cdot\, ))(x) = 0, & \text{for } (t, x) \in (0, \infty) \times \mathbb{R}^d, \\
u (0, x) = \psi (x), & \text{for } x \in \bR^d,
\end{cases}
\end{equation}
where \(\psi \colon \bR^d \to \mathbb{R}\) is a suitable function.

\smallskip

The following result makes this connection precise.
It is a slight extension of \cite[Proposition 4.10]{hol16}, where such a result was proved for the set \(\cD(A)\).
A careful inspection of the proof shows that the statement remains valid when \(\cD (A)\) is replaced by the larger set \(\cD^p(A)\).

\begin{proposition} \label{prop: weak sense}
Let \((T_t)_{t \in \bR_+}\) be a sublinear Markovian semigroup on a convex cone \(\cH\) of real-valued functions on \(\bR^d\)
containing all constant functions, and let \(A\)
be its pointwise generator, with pointwise domain \(\cD^p(A)\).
Consider \(\psi \in \cH\) and assume that \((t,x)\mapsto T_t(\psi)(x)\) is continuous.
If \(C^\infty_b(\bR^d; \bR) \subset \cD^p(A)\),
then 
\[
\bR_+ \times \bR^d \ni (t, x) \mapsto T_t(\psi)(x)
\]
is a viscosity solution to the evolution equation \eqref{eq: PDE A}.
\end{proposition}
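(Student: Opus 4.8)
The plan is to adapt the argument of \cite[Proposition~4.10]{hol16} verbatim, checking only that it never uses more than membership of the test functions in the \emph{pointwise} domain $\cD^p(A)$. Write $u(t,x) := T_t(\psi)(x)$; by hypothesis $u$ is continuous on $\bR_+ \times \bR^d$, and the initial condition $u(0,\cdot) = \psi$ holds by $T_0 = \on{id}$. It remains to verify the sub- and supersolution properties at an arbitrary interior point. I would do the subsolution case and note that the supersolution case is symmetric (reversing inequalities, swapping $\min$ for $\max$). So fix $\phi \in C^\infty_b(\bR_+ \times \bR^d;\bR)$ and $(t_0,x_0) \in (0,\infty)\times\bR^d$ with $u \le \phi$ on $\bR_+\times\bR^d$ and $u(t_0,x_0) = \phi(t_0,x_0)$; the goal is $\partial_t\phi(t_0,x_0) - A(\phi(t_0,\cdot\,))(x_0) \le 0$.

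The core step exploits the semigroup property together with monotonicity. For small $h \in (0,t_0)$, the semigroup property gives $u(t_0,x_0) = T_h\big(u(t_0-h,\cdot\,)\big)(x_0)$, and since $u(t_0-h,\cdot\,) \le \phi(t_0-h,\cdot\,)$ pointwise, monotonicity of $T_h$ yields
\[
\phi(t_0,x_0) \;=\; u(t_0,x_0) \;\le\; T_h\big(\phi(t_0-h,\cdot\,)\big)(x_0).
\]
Now I would subtract $\phi(t_0-h,x_0)$ from both sides, divide by $h$, and write the right-hand side as $\big(T_h(\phi(t_0-h,\cdot\,))(x_0) - \phi(t_0-h,x_0)\big)/h$. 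The subtlety is that the argument $\phi(t_0-h,\cdot\,)$ of $T_h$ itself varies with $h$; one handles this by adding and subtracting $T_h(\phi(t_0,\cdot\,))(x_0)$: by monotonicity and constant-preservation, $\big|T_h(\phi(t_0-h,\cdot\,))(x_0) - T_h(\phi(t_0,\cdot\,))(x_0)\big| \le \sup_x |\phi(t_0-h,x) - \phi(t_0,x)| = o(h)$ as $h \downarrow 0$, because $\phi$ is $C^1$ in time with bounded derivative (so the time-difference is $\le h\,\|\partial_t\phi\|_\infty$, and in fact equals $h\,\partial_t\phi(t_0,x_0) + o(h)$ uniformly; here $C^1_b$ in time is exactly what is needed). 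Combining,
\[
0 \;\le\; \frac{\phi(t_0-h,x_0) - \phi(t_0,x_0)}{h} \;+\; \frac{T_h(\phi(t_0,\cdot\,))(x_0) - \phi(t_0,x_0)}{h} \;+\; o(1).
\]
Letting $h \downarrow 0$: the first term tends to $-\partial_t\phi(t_0,x_0)$, and the second tends to $A(\phi(t_0,\cdot\,))(x_0)$ — and this is precisely the point where $\cD^p(A)$ suffices: we only need the pointwise limit $\lim_{h\to 0}\big(T_h(\phi(t_0,\cdot\,))(x_0) - \phi(t_0,x_0)\big)/h$ to exist \emph{at the single point $x_0$}, which is guaranteed by $\phi(t_0,\cdot\,) \in C^\infty_b(\bR^d;\bR) \subset \cD^p(A)$; we never invoke the stronger requirement, built into $\cD(A)$, that the limit function again lie in $\cH$ or that the convergence be in any stronger sense. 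This gives $\partial_t\phi(t_0,x_0) - A(\phi(t_0,\cdot\,))(x_0) \le 0$, i.e., $u$ is a viscosity subsolution.

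The main obstacle, such as it is, is bookkeeping rather than conceptual: one must be careful that the perturbation estimate for the moving argument $\phi(t_0-h,\cdot\,)$ is genuinely $o(h)$ after division by $h$ — i.e. that one extracts the leading linear-in-$h$ term $h\,\partial_t\phi(t_0,x_0)$ correctly and uniformly in $x$ — and that nothing in \cite{hol16}'s original proof secretly uses $\cD(A)$ beyond the existence of the pointwise infinitesimal limit at the relevant base points. A careful reading of that proof confirms it does not, so the extension to $\cD^p(A)$ goes through unchanged. I would therefore present the proof essentially as: "Repeat the proof of \cite[Proposition~4.10]{hol16}, observing that at the point $x_0$ only the existence of the pointwise generator limit for $\phi(t_0,\cdot\,)$ is used," together with the displayed chain above for completeness, and the symmetric remark for supersolutions.
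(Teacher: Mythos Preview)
Your overall strategy matches the paper's exactly: both defer to \cite[Proposition~4.10]{hol16} and observe that the argument uses only the existence of the pointwise generator limit at the base point, so \(\cD^p(A)\) in place of \(\cD(A)\) suffices. The paper itself gives no further detail beyond this remark.

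Your sketch, however, contains a genuine error in the justification of the \(o(1)\) term. The claim \(\sup_x|\phi(t_0-h,x)-\phi(t_0,x)| = o(h)\) is false: this supremum is of exact order \(h\) (asymptotic to \(h\sup_x|\partial_t\phi(t_0,x)|\)), and feeding the 1-Lipschitz bound \(|T_h(\phi(t_0-h,\cdot))(x_0) - T_h(\phi(t_0,\cdot))(x_0)| \le \sup_x|\phi(t_0-h,x)-\phi(t_0,x)|\) into your chain yields, after dividing by \(h\) and passing to the limit, only \(0 \le A(\phi(t_0,\cdot))(x_0) + \|\partial_t\phi\|_\infty\), which involves the wrong quantity \(\|\partial_t\phi\|_\infty\) rather than \(\partial_t\phi(t_0,x_0)\). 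Indeed, if the cross term really were \(o(h)\), the term \(\tfrac{\phi(t_0-h,x_0)-\phi(t_0,x_0)}{h}\) in your displayed inequality would have no source, so your stated bound is inconsistent with your own display.

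A correct derivation of your displayed inequality requires more structure of \(T_h\) than the contraction estimate. Write \(\phi(t_0-h,\cdot) = \phi(t_0,\cdot) - h\,\partial_t\phi(t_0,\cdot) + r_h\) with \(\|r_h\|_\infty = o(h)\); then subadditivity and positive homogeneity of \(T_h\), together with the \(\|\cdot\|_\infty\)-contraction, give
\[
T_h(\phi(t_0-h,\cdot))(x_0) \;\le\; T_h(\phi(t_0,\cdot))(x_0) + h\,T_h\big(-\partial_t\phi(t_0,\cdot)\big)(x_0) + o(h).
\]
Finally, since \(-\partial_t\phi(t_0,\cdot) \in C^\infty_b(\bR^d;\bR) \subset \cD^p(A)\), the existence of the pointwise generator limit forces \(T_h(-\partial_t\phi(t_0,\cdot))(x_0) \to -\partial_t\phi(t_0,x_0)\), and your displayed inequality follows. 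Note that this step uses \(\cD^p(A)\) for a \emph{second} test function beyond \(\phi(t_0,\cdot)\), a point your sketch overlooks.
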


\begin{remark}
Consider a sublinear Markovian semigroup on a convex cone \(\cH\) with generator \(A\).
By definition of the domain, the generator \(A\) maps \(\cD(A)\) into \(\cH\).
For the semigroup \((S_t)_{t \in \bR_+}\) on the space of \emph{bounded} continuous functions from \(\bR^d\) into \(\bR\), the limit 
\[
\lim_{t \to 0} \frac{S_t(\phi)(x) - \phi(x)}{t}
\]
equals \(G(x,\phi)\) for all \(\phi \in C^\infty_b(\bR^d;\bR)\) and \(x \in \bR^d\) by Proposition \ref{prop: generator}. Hence, the assumption \(C^\infty_b(\bR^d; \bR) \subset \cD(A)\) from \cite[Proposition 4.10]{hol16} entails in particular that \(x \mapsto G(x,\phi)\) is bounded for every \(\phi \in C^\infty_b(\bR^d; \bR)\).
This imposes boundedness restrictions on the coefficients \((b,\sigma \sigma^*, \n \o k^{-1})\).
Using the pointwise domain \(\cD^p(A)\) in Proposition \ref{prop: weak sense} allows us to circumvent them. 
\end{remark}

By virtue of Theorem~\ref{thm: main V}, Proposition~\ref{prop: weak sense} enables us to identify \((S_t)_{t \in \bR_+}\) as the unique semigroup whose (pointwise) generator is given through the operator \(G\). More precisely, we have the following:

\begin{theorem} \label{thm: uni chara}
Suppose that Condition \ref{cond: V3} holds.
Then,
\((S_t)_{t \in \bR_+}\) as defined in~\eqref{eq: semigroup} is the unique sublinear Markovian semigroup on \(C_b(\bR^d; \bR)\) with the following properties:
\begin{enumerate}
    \item[\textup{(i)}] for \(\psi \in C_b(\bR^d; \bR)\), the map \((t,x) \mapsto S_t(\psi)(x)\) is continuous;
    \item[\textup{(ii)}] for all \(x \in \bR^d\) and \(\phi \in  C_c^\infty(\bR^d; \bR)\),
    \[
    \lim_{t \to 0} \frac{S_t (\phi)(x) - \phi (x)}{t} = G(x, \phi).
    \]
\end{enumerate}
\end{theorem}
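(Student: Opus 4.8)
The plan is to derive Theorem~\ref{thm: uni chara} by combining the viscosity-solution identification of Theorem~\ref{thm: main V}(iii) with the abstract link between a sublinear semigroup and its generator equation from Proposition~\ref{prop: weak sense}, and finally invoking the comparison principle from \cite{hol16} (Theorem~\ref{theo: comparison}) to pin down uniqueness. The key observation that makes everything fit together is that Condition~\ref{cond: V3} is strong enough to supply \emph{both} the viscosity-solution property of the candidate \(v(t,x)=S_t(\psi)(x)\) and a comparison principle for the PDE~\eqref{eq: G eq}; it also implies Condition~\ref{cond: V1}, so Proposition~\ref{prop: existence} and Proposition~\ref{prop: generator} are available.

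First I would verify that \((S_t)_{t\in\bR_+}\) itself satisfies (i) and (ii). Property~(ii) is exactly Proposition~\ref{prop: generator} (note \(C_c^\infty(\bR^d;\bR)\subset C_b^2(\bR^d;\bR)\) with uniformly continuous second derivatives), and property~(i) is Corollary~\ref{cor: c_b feller}. By Corollary~\ref{cor: c_b feller}, \((S_t)_{t\in\bR_+}\) also genuinely maps \(C_b(\bR^d;\bR)\) into itself, so it is a sublinear Markovian semigroup on \(C_b(\bR^d;\bR)\) in the sense of the definition. This establishes existence. The remaining work is uniqueness.

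For uniqueness, let \((T_t)_{t\in\bR_+}\) be any sublinear Markovian semigroup on \(C_b(\bR^d;\bR)\) with properties (i) and (ii), and fix \(\psi\in C_b(\bR^d;\bR)\). Property~(ii) says precisely that \(C_c^\infty(\bR^d;\bR)\) lies in the pointwise domain \(\cD^p(A)\) of the pointwise generator \(A\) of \((T_t)_{t\in\bR_+}\), with \(A(\phi)(x)=G(x,\phi)\) there. The subtlety is that Proposition~\ref{prop: weak sense} is stated for \(C_b^\infty\) rather than \(C_c^\infty\); I would address this by the standard localization argument: for \(\phi\in C_b^\infty(\bR^d;\bR)\) and a test function touching from above at a point \(x_0\), one multiplies \(\phi\) by a smooth cutoff equal to \(1\) near \(x_0\) to reduce to the \(C_c^\infty\) case, using that \(G(x_0,\cdot)\) depends on \(\phi\) only through its values, gradient and Hessian at \(x_0\) and its global behaviour under the non-local term — here one invokes Lemma~\ref{lem: non local 1} (as in the proof of Proposition~\ref{prop: generator}) to control the tail contribution of the non-local operator, so that the generator values agree. (Alternatively, one notes that the proof of \cite[Proposition~4.10]{hol16} only uses the touching test functions locally and goes through verbatim with \(C_c^\infty\).) Granting this, Proposition~\ref{prop: weak sense} applied with \(\cH=C_b(\bR^d;\bR)\), generator \(A\), and the continuous map \((t,x)\mapsto T_t(\psi)(x)\) (continuity is hypothesis~(i)) yields that \(u^T(t,x):=T_t(\psi)(x)\) is a bounded viscosity solution to \eqref{eq: PDE A}, which is \eqref{eq: G eq} since \(A(\cdot)(x)=G(x,\cdot)\) on \(C_c^\infty\) and viscosity (super/sub)solutions only see the equation through such test functions.

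Finally, under Condition~\ref{cond: V3}, Theorem~\ref{thm: main V}(iii) tells us that \(v(t,x)=S_t(\psi)(x)\) is \emph{the} unique bounded viscosity solution to~\eqref{eq: G eq}. Since \(u^T\) is a bounded viscosity solution to the same equation with the same initial datum \(\psi\), uniqueness forces \(T_t(\psi)(x)=v(t,x)=S_t(\psi)(x)\) for all \((t,x)\in\bR_+\times\bR^d\); as \(\psi\in C_b(\bR^d;\bR)\) was arbitrary, \(T_t=S_t\) for all \(t\), completing the proof. The main obstacle I anticipate is the bookkeeping in the previous paragraph: making the passage from \(C_c^\infty\) to the \(C_b^\infty\) hypothesis of Proposition~\ref{prop: weak sense} airtight (equivalently, checking that \eqref{eq: G eq} interpreted with \(G\) and with the abstract generator \(A\) have the same viscosity solutions), which requires the non-local cutoff estimate of Lemma~\ref{lem: non local 1} rather than a purely local argument; everything else is a direct concatenation of results already in hand.
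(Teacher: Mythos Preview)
Your overall strategy matches the paper's: verify that \((S_t)_{t\in\bR_+}\) has properties (i) and (ii) via Corollary~\ref{cor: c_b feller} and Proposition~\ref{prop: generator}, and for uniqueness feed an arbitrary semigroup \((T_t)_{t\in\bR_+}\) with (i)--(ii) through Proposition~\ref{prop: weak sense} and then invoke the uniqueness in Theorem~\ref{thm: main V}(iii). You also correctly identify the one genuine obstruction, namely that Proposition~\ref{prop: weak sense} asks for \(C_b^\infty(\bR^d;\bR)\subset\cD^p(A)\), whereas hypothesis~(ii) only gives \(C_c^\infty(\bR^d;\bR)\subset\cD^p(A)\).

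Where your fix falls short is in how you propose to bridge this gap. Multiplying a viscosity test function \(\phi\in C_b^\infty\) by a cutoff near the touching point does \emph{not} reduce to the \(C_c^\infty\) case for a non-local operator: the definition requires \(\phi\geq u\) globally, and truncation destroys this (since \(u\) is merely bounded, not compactly supported), so the truncated function is no longer admissible. Likewise the claim that ``the proof of \cite[Proposition~4.10]{hol16} only uses the touching test functions locally'' is not accurate: the monotonicity step \(T_s(u(t-s,\cdot))(x)\leq T_s(\phi(t-s,\cdot))(x)\) uses \(\phi\geq u\) everywhere, and then one applies the generator to \(\phi(t-s,\cdot)\in C_b^\infty\) at the single point \(x\), so one really needs \(\phi(t-s,\cdot)\in\cD^p(A)\). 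Finally, Lemma~\ref{lem: non local 1} controls the dependence of the non-local term on the \emph{spatial} variables \(x,y\), not the large-jump tail in \(z\); it is the wrong tool for the cutoff comparison you have in mind.

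The paper closes the gap in a different and cleaner way: it shows directly that the generator identity \(\lim_{t\to 0}t^{-1}(T_t(\phi)(x)-\phi(x))=G(x,\phi)\) extends from \(C_c^\infty\) to all of \(C_b^\infty\). The key ingredient is the tightness property \(\lim_{R\to\infty}\sup_{f\in F}\n(f,\{\|k(f,x,\cdot)\|>R\})=0\), which follows from Condition~\ref{cond: V3}; with this in hand, the argument from the proof of \cite[Corollary~4.7]{K21} upgrades the domain inclusion. Once \(C_b^\infty\subset\cD^p(A)\) is established, Proposition~\ref{prop: weak sense} applies verbatim and the rest of your argument goes through.
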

\begin{proof}
    That \((S_t)_{t \in \bR_+}\) given by \eqref{eq: semigroup} has the properties (i) and (ii) follows from Theorem~\ref{thm: main V} and Proposition~\ref{prop: generator}.

    Conversely, suppose that \((T_t)_{t \in \bR_+}\) is a sublinear Markovian semigroup with the properties (i) and (ii). 
    We claim that 
    \begin{equation} \label{eq: pf cor}
     \lim_{t \to 0} \frac{T_t (\phi)(x) - \phi (x)}{t} = G(x, \phi), \quad x \in \bR^d,  
    \end{equation}
    extends to all \(\phi \in C^\infty_b(\bR^d;\bR)\). To see this, observe that, for fixed \(x \in \bR^d\), we have the tightness property
    \[
    \lim_{R \to \infty} \, \sup_{f \in F} \big\{ \n\, (f, \{ \| k (f, x, \, \cdot\, )\| > R \} ) \big\} = 0,
    \]
    as a consequence of Condition \ref{cond: V3}. Hence, it follows from the proof of \cite[Corollary 4.7]{K21} that \eqref{eq: pf cor} remains valid for \(\phi \in C^\infty_b(\bR^d;\bR)\). In particular, we conclude that 
    \(C^\infty_b(\bR^d;\bR)\) is contained in the pointwise domain of the      generator of \((T_t)_{t \in \bR_+}\).
    Next, take an arbitrary \(\psi \in C_b(\bR^d; \bR)\).
    Thanks to part (i) and Proposition \ref{prop: weak sense}, we conclude that the map \((t, x) \mapsto T_t(\psi)(x)\)
    is a (bounded) viscosity solution to \eqref{eq: G eq}.
    Therefore, part (iii) of Theorem~\ref{thm: main V} implies that
    \[
    S_t(\psi)(x) = T_t(\psi)(x), \quad (t,x) \in \bR_+ \times \bR^d,
    \]
    as desired.  
\end{proof}

To the best of our knowledge, the first theorem that uses viscosity methods to characterize sublinear semigroups via versions of the pointwise generator appeared in \cite{lions_nisio}. A general characterization of sublinear convolution semigroups (that correspond to nonlinear L\'evy processes) was proved in \cite{K21}. A related result for nonlinear Markov processes with jumps was recently established in our paper \cite{CN23a}, using a different methodology as in the present paper. Compared to this result (\cite[Corollary~3.14]{CN23a}), Theorem~\ref{thm: uni chara} requires less assumptions on the coefficients, i.e., the action space \(F\) needs not to be compact, the coefficients might be unbounded, less integrability conditions are imposed and the correspondence \(\Theta\) is not assumed to be convex-valued. The latter point is particularly useful from a modeling point of view, as it allows to treat  \(b, \sigma, k\) and \(\{\n \, (f, dz) \colon f \in F\}\) directly, without taking the convexity of \(\Theta\) into consideration.
At this point, let us mention that the paper \cite{CN23a} works with a single reference measure \(\n\), instead of a family \(\{\n \, (f, dz) \colon f \in F\}\). From a modeling point of view this is no restriction, as it is always possible to use a single reference measure (cf., e.g., \cite[Remark 3.6, p. 194]{CJ81} or \cite[Exercise~9.2.6]{stroockanalytic}). However, to verify the Lipschitz assumptions for the coefficient \(k\), it appears to be handy in certain cases to allow the reference measure to depend on \(F\). For example, we think of situations of the type
\[
F \subset \bR_+, \qquad \int \1_{G \backslash \{0\}} ( k (x, z) ) \, f \cdot\, \n (dz), 
\]
where, in our setting, assumptions can be imposed directly on \(k\), while computations are needed to translate such cases into the framework from \cite{CN23a}.

\section{Proofs} \label{sec: proofs}

This section is dedicated to the proofs of our results.
We collect several auxiliary results in Section \ref{sec: maximal inequality}. We then proceed by establishing continuity properties of the nonlinear operator \(G\) in Section~\ref{sec: pf continuity G}.
Thereafter, in Sections \ref{sec: pf subsolution} and \ref{sec: pf supersolution}, we verify the sub- and supersolution properties of the upper and lower envelopes of \(v\).
Finally, leaning on the results obtained in previous sections, we prove Theorem \ref{thm: main V} in Section \ref{sec: pf thm viscosity}.

\subsection{Maximal Inequality} \label{sec: maximal inequality}
The following result is a consequence of \cite[Proposition 5.1]{K19}.
It will turn out to be of great value for the proof of our main result Theorem \ref{thm: uni chara}, as it allows us to relax integrability assumptions on the coefficients compared to \cite[Corollary~3.14]{CN23a}.

\begin{proposition} \label{prop: maximal inequality}
Suppose that Condition \ref{cond: V1} holds.
There exists a constant \(C > 0\) such that
\[
\sup_{P \in \cR(x)} P \Big( \sup_{s \in [0,u]} \|X_s - x \| > r \Big) \leq u\, C  \sup_{f \in F} \sup_{\|y-x \| \leq r} \sup_{\|\xi\| \leq r^{-1}} |\q(f,y,\xi)|
\]
for all \(u \in \bR_+\) and \(x \in \bR^d\).
\end{proposition}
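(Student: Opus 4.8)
\textbf{Plan of proof for Proposition \ref{prop: maximal inequality}.}
The statement is a maximal inequality uniform over the ambiguity set $\cR(x)$, and the natural route is to reduce it to the single-measure maximal inequality for symbols recorded in \cite[Proposition 5.1]{K19}. The plan is as follows. First I would fix $x \in \bR^d$, $u \in \bR_+$, $r > 0$ and take an arbitrary $P \in \cR(x)$. By definition of $\cR(x)$, the coordinate process $X$ is under $P$ a semimartingale starting at $x$ whose differential characteristics $(dB^P/d\llambda, dC^P/d\llambda, \nu^P/d\llambda)$ lie $(\llambda\otimes P)$-a.e.\ in $\Theta(X)$; hence there is a measurable selector realizing, at each $(\omega,t)$, these characteristics as $(b(f,X_t),\sigma\sigma^*(f,X_t),\n(f,\cdot)\o k(f,X_t,\cdot)^{-1})$ for some $f$. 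The key consequence I need is a pointwise bound on the associated (time- and path-dependent) symbol of $X$ under $P$: at each point its modulus is dominated by $\sup_{f\in F}|\q(f, X_t, \xi)|$. Condition \ref{cond: V1}, in particular the local boundedness of $b,\sigma$ from part (i) and the integrability from part (iii) (together with Remark (i) after Condition \ref{cond: V1}), guarantees this symbol is locally bounded, so the semimartingale $X$ under $P$ has a well-defined symbol in the sense needed to invoke \cite[Proposition 5.1]{K19}.

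Second, I would apply \cite[Proposition 5.1]{K19} to $X$ under $P$. That result bounds $P(\sup_{s\in[0,u]}\|X_s - x\| > r)$ by $u\,C$ times the supremum of the modulus of the symbol of $X$ over the ball $\{\|y - x\|\le r\}$ and over frequencies $\|\xi\|\le r^{-1}$, with $C$ an absolute constant depending only on the dimension $d$. The crucial point is that the constant $C$ from \cite{K19} does not depend on the particular symbol, hence not on $P$ nor on $x$. Using the pointwise symbol bound from the first step, the symbol of $X$ under $P$ evaluated at $(y,\xi)$ is $\le \sup_{f\in F}|\q(f,y,\xi)|$ for every $y$ in the relevant ball, so
\[
P \Big( \sup_{s \in [0,u]} \|X_s - x\| > r \Big) \le u\,C \sup_{\|y - x\| \le r} \sup_{\|\xi\| \le r^{-1}} \sup_{f \in F} |\q(f,y,\xi)|.
\]
Third, since the right-hand side is independent of $P$, I take the supremum over $P \in \cR(x)$ to obtain the claimed inequality (after interchanging the harmless suprema over $f$ and over $y,\xi$).

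\textbf{Main obstacle.} The routine-looking but genuinely delicate step is the reduction in the first paragraph: passing from ``$X$ solves a controlled martingale problem with characteristics in $\Theta(X)$'' to ``$X$ is a semimartingale whose pointwise symbol is dominated by $\sup_{f}|\q(f,\cdot,\cdot)|$,'' in a form that fits the hypotheses of \cite[Proposition 5.1]{K19}. One must check measurability of the implicit control selector and, more importantly, that the symbol-type object appearing in \cite{K19} (which is defined for Markovian, possibly time-inhomogeneous semimartingales via their characteristics) is under $\cR(x)$ indeed bounded pointwise by the $\sup_f$ of the symbol $\q$ of \eqref{eq: df symbol}; this is exactly where Condition \ref{cond: V1} (i) and (iii) and the local boundedness they entail are used, and where some care is needed because the characteristics under $P$ depend on both path and time through the selector $f$. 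Once this is in place, the rest is a direct citation of \cite[Proposition 5.1]{K19} and a supremum.
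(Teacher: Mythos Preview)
Your proposal is correct and follows exactly the route taken in the paper: the proposition is stated there as a direct consequence of \cite[Proposition~5.1]{K19}, with no further details given. Your write-up simply spells out the reduction (the pointwise domination of the symbol of \(X\) under any \(P \in \cR(x)\) by \(\sup_{f \in F} |\q(f,\cdot,\cdot)|\), and the fact that the constant in \cite{K19} is absolute) that the paper leaves implicit.
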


We now record two consequences of Proposition \ref{prop: maximal inequality} that will be useful in the following.
\begin{corollary} \label{cor: maximal inequality}
Suppose that Condition \ref{cond: V1} holds, and let \(K \subset \bR^d\) be compact. Then,
\begin{itemize}
    \item[\textup{(i)}] for every \(r > 0\),
\[
\sup_{x \in K} \sup_{P \in \cR(x)} P \Big( \sup_{s \in [0,u]} \|X_s - x \| > r\Big) \to 0 \text{ as } u \to 0,
\]
    \item[\textup{(ii)}] and, for every \(u > 0\),
    \[
\sup_{x \in K} \sup_{P \in \cR(x)} P \Big( \sup_{s \in [0,u]} \|X_s - x \| > r\Big) \leq u\, C_r
\]
for some constant \(C_r >0\) such that \(C_r \to 0\) as \(r \to \infty\).
\end{itemize}
\end{corollary}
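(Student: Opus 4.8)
The plan is to deduce both statements from the maximal inequality of Proposition~\ref{prop: maximal inequality} by controlling its right-hand side uniformly over $x \in K$. Since $K$ is compact, $\rho_K := \sup_{x \in K}\|x\| < \infty$, and every $y$ with $\|y - x\| \le r$ for some $x \in K$ lies in $\{\|y\| \le r + \rho_K\}$. Hence, taking the supremum over $x \in K$ in Proposition~\ref{prop: maximal inequality}, one obtains for all $u, r > 0$
\[
\sup_{x \in K} \sup_{P \in \cR(x)} P\Bigl(\sup_{s \in [0,u]}\|X_s - x\| > r\Bigr) \le u\, C\, N(r), \qquad N(r) := \sup_{f \in F} \sup_{\|y\| \le r + \rho_K} \sup_{\|\xi\| \le r^{-1}} |\q(f,y,\xi)|,
\]
with $C$ independent of $u$, $r$ and $x$. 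From here, part~(i) reduces to showing $N(r) < \infty$ for each fixed $r$ (then let $u \downarrow 0$), and part~(ii) reduces to showing that $N(r) \to 0$ as $r \to \infty$ (then set $C_r := C\, N(r)$).

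The one point that needs care — and the step I expect to be the main obstacle — is that Condition~\ref{cond: V1}(ii) controls $|\q|$ only on the ``matched'' regions $\{\|y\| \le R\} \times \{\|\xi\| \le R^{-1}\}$ for large $R$, whereas $N(r)$ involves the ``mismatched'' rectangle $\{\|y\| \le r + \rho_K\} \times \{\|\xi\| \le r^{-1}\}$, in which $r$ may be small. I would resolve this by using that $\xi \mapsto \sqrt{|\q(f,y,\xi)|}$ is subadditive (\cite[Proposition~2.17]{schilling}), which gives $|\q(f,y,\xi)| \le n^2 |\q(f,y,\xi/n)|$ for every integer $n \ge 1$.

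For part~(i): fix $r > 0$ and, invoking Condition~\ref{cond: V1}(ii), choose $R \ge r + \rho_K$ so large that $\sup_{f \in F}\sup_{\|y\| \le R}\sup_{\|\xi\| \le R^{-1}}|\q(f,y,\xi)| \le 1$; with $n := \lceil R/r\rceil$, any $\xi$ with $\|\xi\| \le r^{-1}$ satisfies $\|\xi/n\| \le R^{-1}$, while $\|y\| \le r + \rho_K \le R$, so $|\q(f,y,\xi)| \le n^2$ and thus $N(r) \le n^2 < \infty$; letting $u \downarrow 0$ in the displayed bound yields (i). For part~(ii): restrict to $r \ge \rho_K$, so that $r + \rho_K \le 2r$, and apply the same estimate with $R := 2r$ and $n = 2$ to obtain $N(r) \le 4\sup_{f \in F}\sup_{\|y\| \le 2r}\sup_{\|\xi\| \le (2r)^{-1}}|\q(f,y,\xi)|$; the right-hand side tends to $0$ as $r \to \infty$ by Condition~\ref{cond: V1}(ii) — this is exactly the ``seemingly stronger'' reformulation recorded in the Remark following Condition~\ref{cond: V1} (with $\lambda = 2$). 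Everything else is routine bookkeeping.
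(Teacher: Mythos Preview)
Your proof is correct. For part~(ii) you follow essentially the same route as the paper: both arguments reduce the claim to the uniform continuity of the symbol from Condition~\ref{cond: V1}(ii), with the ``mismatched'' region \(\{\|y\|\le r+\rho_K\}\times\{\|\xi\|\le r^{-1}\}\) handled via the subadditivity observation recorded in the Remark after Condition~\ref{cond: V1}.

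For part~(i), however, you take a different path. The paper simply appeals to parts~(i) and~(iii) of Condition~\ref{cond: V1} to conclude that the symbol is locally bounded (the standard estimate \(|\q(f,y,\xi)| \le C(\|b\|\,\|\xi\| + \|\sigma\sigma^*\|\,\|\xi\|^2 + \int(\|z\|^2\|\xi\|^2\wedge 1)\,(\n\o k^{-1})(dz))\) on a compact \(y\)-set with bounded \(\xi\)). You instead avoid parts~(i) and~(iii) entirely, using only Condition~\ref{cond: V1}(ii) together with the subadditivity trick to pull down a bound from a large matched region \(\{\|y\|\le R\}\times\{\|\xi\|\le R^{-1}\}\). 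Your argument is a little more economical in its hypotheses (only one part of Condition~\ref{cond: V1} is used for both assertions), while the paper's is more direct but implicitly relies on the explicit structure of the symbol.
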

\begin{proof}
We start with part (i).
By parts (i) and (iii) of Condition \ref{cond: V1}, we have
\[
\sup_{x \in K} \sup_{f \in F} \sup_{\|y-x \| \leq r} \sup_{\|\xi\| \leq r^{-1}} |\q(f,y,\xi)| < \infty
\] 
for every \(r > 0\) and every compact subset \(K \subset \bR^d\). Hence, the assertion of~(ii) follows from Proposition~\ref{prop: maximal inequality}.

\smallskip 
Regarding part (ii), using Proposition \ref{prop: maximal inequality}, it suffices to show that, for fixed \(u >0\),
\[
\sup_{x \in K} \sup_{f \in F} \sup_{\|y-x \| \leq r} \sup_{\|\xi\| \leq r^{-1}} |\q(f,y,\xi)| \to 0 \text{ as } r\to \infty.
\] 
This is again a consequence of the uniform continuity of \(\q\) (part (ii) of Condition \ref{cond: V1}). 
The proof is complete.
\end{proof}

The next result is a restatement of \cite[Theorem 5.3 (ii)]{K19}.
\begin{proposition} \label{prop: continuity in time}
Suppose that Condition \ref{cond: V1} holds, and let \(\phi \in C_b(\bR^d; \bR)\).
Then, the map \(t \mapsto S_t(\phi)(x)\) is continuous, uniformly in \(x \in K\) for any compact subset \(K \subset \bR^d\).
\end{proposition}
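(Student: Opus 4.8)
\textbf{Proof proposal for Proposition~\ref{prop: continuity in time}.}

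The plan is to prove the stronger statement that the family $\{t \mapsto S_t(\phi)(x) : x \in K\}$ is equicontinuous at every time point, and then deduce the claimed uniform continuity. I would first observe that by the nonlinear semigroup property (Proposition~\ref{prop: markov property}) together with sublinearity, for $0 \le s \le t$ and any $x$,
\[
\big| S_t(\phi)(x) - S_s(\phi)(x) \big| = \big| S_s\big(S_{t-s}(\phi) - \phi\big)(x) \big| \le \sup_{y \in \bR^d} \big| S_{t-s}(\phi)(y) - \phi(y) \big|,
\]
where I used that $S_s$ is monotone and constant-preserving, so $|S_s(g)(x)| \le \sup_y |g(y)|$ for bounded $g$. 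Hence it suffices to control $\sup_y |S_u(\phi)(y) - \phi(y)|$ as $u \searrow 0$; this even gives \emph{global} uniform continuity in $x$, not just on compacts, and reduces matters to a single modulus estimate near $u = 0$.

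For the estimate near zero, fix $\varepsilon > 0$. Since $\phi \in C_b(\bR^d;\bR)$ it is uniformly continuous, so there is $\delta > 0$ with $|\phi(y) - \phi(x)| \le \varepsilon$ whenever $\|y - x\| \le \delta$. Then for any $x \in \bR^d$ and $P \in \cR(x)$,
\[
\big| E^P[\phi(X_u)] - \phi(x) \big| \le E^P\big[ |\phi(X_u) - \phi(x)| \big] \le \varepsilon + 2\|\phi\|_\infty\, P\big( \|X_u - x\| > \delta \big).
\]
Taking the supremum over $P \in \cR(x)$ and invoking the maximal inequality (Proposition~\ref{prop: maximal inequality}),
\[
\big| S_u(\phi)(x) - \phi(x) \big| \le \varepsilon + 2\|\phi\|_\infty\, u\, C \sup_{f \in F} \sup_{\|y - x\| \le \delta} \sup_{\|\xi\| \le \delta^{-1}} |\q(f,y,\xi)|.
\]
The remaining point is that the supremum of the symbol on the right is bounded \emph{uniformly in $x \in \bR^d$}: by part (ii) of Condition~\ref{cond: V1} (uniform continuity of the symbol, in the strengthened form noted in the Remark, using subadditivity of $\xi \mapsto \sqrt{|\q(f,x,\xi)|}$), one has $\sup_{f} \sup_{\|x\| \le r} \sup_{\|\xi\| \le r^{-1}} |\q(f,x,\xi)| \to 0$ as $r \to \infty$, and in particular this quantity is finite for every fixed $r$; choosing $r$ large enough that the ball $\{\|y - x\| \le \delta\}$ is covered appropriately (or, more simply, bounding $\sup_{\|y - x\| \le \delta}$ by a fixed-radius symbol sup via the strengthened condition) yields a finite constant $M = M(\delta)$ independent of $x$. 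Therefore $\sup_x |S_u(\phi)(x) - \phi(x)| \le \varepsilon + 2\|\phi\|_\infty C M u$, which is $\le 2\varepsilon$ for $u$ small, proving $\sup_x |S_u(\phi)(x) - \phi(x)| \to 0$.

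The main obstacle I anticipate is precisely the uniform-in-$x$ control of the symbol supremum appearing after the maximal inequality: the naive bound $\sup_{\|y-x\|\le\delta}$ depends on $x$, and one must use the global structure in Condition~\ref{cond: V1}(ii) — not merely the local boundedness from parts (i) and (iii) — to remove that dependence. (If one only wanted continuity uniformly on compacts $K$, one could instead bound $\sup_{x \in K}\sup_{\|y-x\|\le\delta}$ directly, which is finite by Corollary~\ref{cor: maximal inequality}, and the argument simplifies; but the cleaner route is the global one above.) Once this is handled, combining the two displays gives continuity of $t \mapsto S_t(\phi)(x)$ at every $t$ with a modulus independent of $x$, which is exactly the assertion. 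Since this entire argument is a specialization of \cite[Theorem~5.3~(ii)]{K19} to the present controlled setting, one may alternatively simply verify that the hypotheses of that theorem are met under Condition~\ref{cond: V1} and quote it directly.
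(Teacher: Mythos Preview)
The paper does not give a proof here; it simply records that the statement is a restatement of \cite[Theorem~5.3~(ii)]{K19}. Your closing remark, that one may verify the hypotheses of that theorem under Condition~\ref{cond: V1} and quote it, is therefore exactly the paper's route.

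Your main argument, however, has genuine gaps. First, the displayed equality
\[
\big| S_t(\phi)(x) - S_s(\phi)(x) \big| = \big| S_s\big(S_{t-s}(\phi) - \phi\big)(x) \big|
\]
is false for sublinear \(S_s\): one has \(S_t(\phi) = S_s(S_{t-s}(\phi))\), but \(S_s(g) - S_s(h) \neq S_s(g-h)\) in general. You only get the one-sided inequality \(|S_s(g)-S_s(h)| \le \max\{S_s(g-h),S_s(h-g)\} \le \|g-h\|_\infty\), which still yields the bound you want, so this is easily repaired. Second, and more seriously, you assert that \(\phi \in C_b(\bR^d;\bR)\) is uniformly continuous. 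This is false (take \(\phi(x)=\sin(x^2)\) on \(\bR\)), and without it your global estimate \(\sup_{y\in\bR^d}|S_u(\phi)(y)-\phi(y)|\to 0\) does not go through. Third, the claim that \(\sup_{f}\sup_{\|y-x\|\le\delta}\sup_{\|\xi\|\le\delta^{-1}}|\q(f,y,\xi)|\) is bounded uniformly in \(x\in\bR^d\) does not follow from Condition~\ref{cond: V1}~(ii): that condition couples the \(x\)-radius and the \(\xi\)-radius, and for a fixed \(\xi\)-radius \(\delta^{-1}\) it gives no control as \(\|x\|\to\infty\). Both obstructions disappear if you restrict \(x\) to a compact set \(K\), which is precisely what the proposition asserts and what you yourself note parenthetically: on \(K\), \(\phi\) is uniformly continuous on the compact enlargement \(\{y:\on{dist}(y,K)\le 1\}\), and the symbol supremum is finite by parts (i) and (iii) of Condition~\ref{cond: V1} (cf.\ Corollary~\ref{cor: maximal inequality}~(i)). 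So the compact-set version of your argument is correct and essentially reconstructs the proof in \cite{K19}; only the attempted upgrade to a global-in-\(x\) statement fails.
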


\begin{lemma} \label{lem: initial values}
Suppose that Condition \ref{cond: V1} holds. Then,
\[
v^*(0,x) = \psi(x) \, \, \text{ and } \, \, v_*(0,x) = \psi(x), \quad x \in \bR^d.
\]
\end{lemma}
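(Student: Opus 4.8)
The claim is that the upper and lower envelopes of $v(t,x) = S_t(\psi)(x)$ agree with $\psi$ on the slice $\{t=0\}$. Since $v(0,x) = S_0(\psi)(x) = \psi(x)$ and $\psi$ is continuous, it is immediate that $v^*(0,x) \geq \psi(x) \geq v_*(0,x)$ (approach along $t=0$). So the real content is the reverse inequalities: $v^*(0,x) \leq \psi(x)$ and $v_*(0,x) \geq \psi(x)$, i.e., that $v(s,y) \to \psi(x)$ as $(s,y) \to (0,x)$. The plan is to establish this joint convergence directly by splitting $|v(s,y) - \psi(x)| \leq |S_s(\psi)(y) - \psi(y)| + |\psi(y) - \psi(x)|$. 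The second term vanishes as $y \to x$ by continuity of $\psi$, uniformly in a neighborhood of $x$. For the first term I would use the probabilistic definition $S_s(\psi)(y) - \psi(y) = \sup_{P \in \cR(y)} E^P[\psi(X_s) - \psi(X_0)]$ together with the maximal inequality.

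Concretely, fix $x \in \bR^d$ and $\eps > 0$. Choose a compact neighborhood $K$ of $x$ (say a closed ball $\overline{B}_1(x)$) and, using uniform continuity of $\psi$ on a slightly larger compact set, pick $\delta \in (0,1]$ so small that $\|y - x\| \leq \delta$ implies $|\psi(y) - \psi(x)| \leq \eps$, and also $\|z - y\| \leq \delta$ with $y \in K$ implies $|\psi(z) - \psi(y)| \leq \eps$. Then for $y \in K$ and any $P \in \cR(y)$,
\[
\big|E^P[\psi(X_s)] - \psi(y)\big| \leq E^P\big[|\psi(X_s) - \psi(y)|\big] \leq \eps + 2\|\psi\|_\infty\, P\Big(\sup_{u \in [0,s]} \|X_u - y\| > \delta\Big).
\]
By Corollary \ref{cor: maximal inequality}(i) — which applies since Condition \ref{cond: V1} is in force — the supremum over $y \in K$ and $P \in \cR(y)$ of $P(\sup_{u \in [0,s]}\|X_u - y\| > \delta)$ tends to $0$ as $s \to 0$. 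Hence there is $s_0 > 0$ such that for all $s \in [0, s_0]$ and all $y \in K$ we have $|S_s(\psi)(y) - \psi(y)| \leq 2\eps$. Combining, for $(s,y)$ with $s \leq s_0$ and $\|y - x\| \leq \delta$ we get $|v(s,y) - \psi(x)| \leq 3\eps$, which shows $\limsup_{(s,y)\to(0,x)} v(s,y) \leq \psi(x)$ and $\liminf_{(s,y)\to(0,x)} v(s,y) \geq \psi(x)$. Together with the trivial direction this gives $v^*(0,x) = v_*(0,x) = \psi(x)$.

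There is essentially no hard obstacle here; the only point requiring a little care is that one needs the maximal-inequality bound to hold \emph{uniformly} in the starting point $y$ over the compact set $K$ (not just for a single $y$), which is exactly what Corollary \ref{cor: maximal inequality}(i) provides — this is why the corollary was stated with the $\sup_{x \in K}$ out front. Alternatively, one could invoke Proposition \ref{prop: continuity in time} to handle the time-regularity of $S_s(\psi)(y)$ and combine it with an equicontinuity argument, but the maximal-inequality route is the most self-contained and is the one I would write up.
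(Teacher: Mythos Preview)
Your argument is correct and follows essentially the same decomposition as the paper: split \(|v(s,y)-\psi(x)| \leq |S_s(\psi)(y)-\psi(y)| + |\psi(y)-\psi(x)|\), handle the second term by continuity of \(\psi\), and show the first vanishes uniformly over \(y\) in a compact neighborhood. The only cosmetic difference is that the paper dispatches the first term by quoting Proposition~\ref{prop: continuity in time} (uniform-in-\(x\) continuity of \(t\mapsto S_t(\psi)(x)\) on compacta), whereas you unpack this directly via Corollary~\ref{cor: maximal inequality}(i)---an alternative you yourself anticipate.
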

\begin{proof}
We only detail the proof for the upper envelope \(v^*\). The lower envelope can be handled with a similar argument.
For \(x \in \bR^d\), take a sequence \((t^n,x^n)_{n \in \bN} \subset \bR_+ \times \bR^d\) such that 
\((t^n, x^n) \to (0,x)\) and \(\lim_{n \to \infty} v(t^n,x^n) = v^*(0,x)\).
Recall that, by construction, \(v(0,y) = \psi(y)\) for every \(y \in \bR^d\). Hence,
\begin{align*}
    | v^*(0,x) - \psi(x) | &= \lim_{n \to \infty} | v(t^n,x^n) - \psi(x) | 
    \\&\leq  \lim_{n \to \infty} \big( | v(t^n,x^n) - v(0,x^n) | + | \psi(x^n) - \psi(x) | \big). 
\end{align*}
Since \(\psi\) is continuous, the second term vanishes. Further, using Proposition~\ref{prop: continuity in time}, we observe that the first term vanishes, too.
This completes the proof.
\end{proof}

\subsection{Continuity of \(G\)} \label{sec: pf continuity G}

In this section, we will show that for \(\phi \in C^{2}_b(\bR_+ \times \bR^d; \bR)\) the map \((t,x) \mapsto G(x, \phi(t,\cdot\,))\) is continuous, where the operator \(G\) is defined in \eqref{eq: G}.
We start with two estimates for the non-local part of \(G\) that will also be used in the proof of Theorem~\ref{thm: main V}.

\begin{lemma} \label{lem: non local 1}
    Suppose that Condition \ref{cond: V1} holds.
    Let \(g \colon \bR_+ \times \bR^d \times \bR^d \to \bR\) be continuous and bounded. Assume that there exists a constant \(C > 0\) such that
    \begin{itemize}
        \item[\textup{(i)}] \(|g(t,x,z)| \leq C ( \|z\|^2 \wedge 1)\),

        \item[\textup{(ii)}] and 
        \[
        \big| g(t,x,z) - g(t,x,w) \big| \leq C \|z-w\|,
        \]
        for all \((t,x,z,w) \in \bR_+ \times \bR^d \times \bR^d \times \bR^d\).
\end{itemize}
        Then, for every compact set \(K \subset \bR^d\) and every \(\varepsilon > 0\), there exists a constant \(\delta = \delta(K, \varepsilon) > 0\) 
        such that
         \[
         \int \big| g(t^0, x^0, k (f, x, z)) - g (t^0, x^0, k(f, y, z)) \big| \, \n (f, dz)  \leq \varepsilon
        \]
         for all \(f \in F\), \((t^0, x^0) \in  \bR_+ \times \bR^d\) and \(x,y \in K\) with \(\| x-y\| \leq \delta.\)
\end{lemma}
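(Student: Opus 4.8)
The plan is to fix a compact set $K \subset \bR^d$ and $\varepsilon > 0$, and to exploit the key structural feature provided by Condition \ref{cond: V1}(iii): there is a Borel function $\gamma = \gamma(K)$ with $\sup_{f \in F} \int (\gamma^2(z) \wedge 1)\, \n(f,dz) < \infty$, with the two localization/tightness limits at $0$ and $\infty$, and with $\|k(f,x,z)\| \leq \gamma(z)$ and $\|k(f,x,z) - k(f,y,z)\| \leq \gamma(z)\, \ka(\|x-y\|)$ for $x,y \in K$. I would split the integral over $L$ into three regions according to the size of $\gamma(z)$: a small-jump region $\{\gamma \leq \kappa\}$, a large-jump region $\{\gamma > R\}$, and the bounded middle region $\{\kappa < \gamma \leq R\}$, choosing $\kappa$ small and $R$ large at the end.

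On the small-jump region, hypothesis (i) on $g$ combined with $\|k(f,x,z)\| \leq \gamma(z) \leq 1$ (for $\kappa \leq 1$) gives $|g(t^0,x^0,k(f,x,z))| \leq C \gamma^2(z)$, and likewise for $y$, so the contribution is bounded by $2C \int_{\{\gamma \leq \kappa\}} \gamma^2(z)\, \n(f,dz)$, which is $< \varepsilon/3$ uniformly in $f$ once $\kappa$ is small enough, by the first limit in Condition \ref{cond: V1}(iii). On the large-jump region, boundedness of $g$ gives a crude bound $2\|g\|_\infty \n(f, \{\gamma > R\})$, which is $< \varepsilon/3$ uniformly in $f$ for $R$ large, by the second limit in Condition \ref{cond: V1}(iii). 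On the remaining region $\{\kappa < \gamma \leq R\}$, I would use the Lipschitz hypothesis (ii) on $g$ together with $\|k(f,x,z) - k(f,y,z)\| \leq \gamma(z)\,\ka(\|x-y\|) \leq R\, \ka(\|x-y\|)$, so the integrand is bounded by $C R\, \ka(\|x-y\|)$; integrating against $\n(f, \{\kappa < \gamma \leq R\}) \leq \kappa^{-2} \sup_{f} \int (\gamma^2 \wedge 1)\, \n(f, dz) =: M_\kappa < \infty$ (finite by the standing integrability bound), the contribution is at most $C R M_\kappa\, \ka(\|x-y\|)$. Since $\ka$ is a modulus of continuity vanishing at $0$, this is $< \varepsilon/3$ once $\|x-y\| \leq \delta$ for $\delta = \delta(K,\varepsilon)$ small enough (note $\kappa$ and $R$ are already fixed by this point). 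Summing the three bounds gives the claim, with $\delta$ depending only on $K$ and $\varepsilon$ (and on $g$ through the constant $C$ and $\|g\|_\infty$).

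The main obstacle — though a mild one — is bookkeeping the order of quantifiers: $\kappa$ and $R$ must be chosen first, depending only on $K$ and $\varepsilon$ (uniformly in $f$, using the two limits), and only afterward is $\delta$ chosen, now allowed to depend on the already-fixed $R$ and $\kappa$. One should also double-check the edge case where $\ka(0^+)$-type degeneracies or an empty middle region arise (if $R \leq \kappa$ the middle term is simply absent), and verify that the bound $\n(f, \{\kappa < \gamma \leq R\}) \leq \kappa^{-2} \sup_f \int(\gamma^2 \wedge 1)\, \n(f,dz)$ is legitimate — this uses $\gamma^2 \geq \kappa^2$ on $\{\gamma > \kappa\}$ together with, say, $\kappa \leq 1$ so that $\gamma^2 \wedge 1 \geq \kappa^2$ there as well. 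Everything else is a routine application of dominated-type estimates, so no deeper tool is needed beyond Condition \ref{cond: V1}(iii) itself.
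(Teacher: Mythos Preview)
Your proposal is correct and follows essentially the same approach as the paper: split the integral into the three regions $\{\gamma \leq \kappa\}$, $\{\kappa < \gamma \leq R\}$, and $\{\gamma > R\}$, use the growth bound (i) on the first, boundedness of $g$ on the third, and the Lipschitz bound (ii) together with $\|k(f,x,z)-k(f,y,z)\|\leq \gamma(z)\ka(\|x-y\|)$ on the middle region. The only cosmetic difference is in how the middle-region measure is controlled---the paper writes $\int_{\{\kappa<\gamma\leq R\}}\gamma\,\n(f,dz)\leq (\kappa^{-1}+R)\int(\gamma^2\wedge 1)\,\n(f,dz)$ rather than your $CR\cdot \kappa^{-2}\sup_f\int(\gamma^2\wedge 1)\,\n(f,dz)$---but both bounds are valid and lead to the same conclusion.
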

\begin{proof}
Let \(K\subset \bR^d\) be compact, \((t^0, x^0) \in \bR_+ \times \bR^d\) and \(x,y \in K\). Further, let \(\kappa  \in (0,1)\) and \(R > 1\).   
We are going to separately estimate the following four quantities
\begin{align*}
    &I_1 :=  \int_{\{\gamma \leq \kappa\}} \big| g(t^0,x^0, k (f, x, z)) - g (t^0,x^0,k (f, y, z)) \big| \, \n (f, dz), 
    \\&  I_2 :=  \int_{\{\kappa < \gamma \leq R\}} \big| g (t^0,x^0, k (f, x, z)) - g (t^0,x^0, k (f, y, z)) \big| \, \n (f, dz),
    \\&  I_3:=  \int_{\{\gamma > R\}} \big| g (t^0,x^0,k (f, x, z)) - g (t^0,x^0,k (f, y, z)) \big| \, \n (f, dz),
\end{align*}
where \(\gamma = \gamma \, (K) \colon L \to [0,\infty]\) is as in Condition \ref{cond: V1}. Let \(\ka = \ka \, (K) \colon [0, \infty] \to [0, \infty]\) be the modulus of continuity of the coefficient \(k\) from Condition \ref{cond: V1}. 
We start with \(I_1\). Using the growth assumption on \(g\), we obtain 
\begin{equation} \label{eq: estimate i_1}
    \begin{split}
        I_1 & \leq \int_{\{\gamma \leq \kappa\}} \big(\|k(f,x,z)\|^2 \wedge 1 + \|k(f,y,z)\|^2 \wedge 1\big) \, \n(f, dz)
        \\& \leq C \int_{\{\gamma \leq \kappa\}} (\gamma^2(z) \wedge 1) \, \n(f, dz),
    \end{split}
\end{equation}
where the constant \(C > 0\) depends on \(K\). In particular, by the tightness assumption
\[
\lim_{\kappa \searrow 0}\, \sup_{f \in F} \, \int_{\{\gamma \leq \kappa\}} \gamma^2 (z) \, \n (f, dz) = 0
\]
from Condition~\ref{cond: V1}, we get that \(I_1 \to 0\) as \(\kappa \searrow 0\). We proceed with \(I_2\). Using the Lipschitz continuity of \(g\) and Condition~\ref{cond: V1}, we obtain that 
\begin{equation} \label{eq: estimate i_2}
    \begin{split}
        I_2 & \leq C \int_{\{\kappa < \gamma \leq R\}} \|k(f,x,z) - k(f,y,z)\| \, \n(f, dz)
        \\& \leq C \ka(\|x-y\|) \int_{\{\kappa < \gamma \leq R\}} \gamma(z)  \, \n(f, dz),
        \\& \leq C \ka(\|x-y\|) \, \Big(\dfrac{1}{\kappa} + R \Big)\, \int \,  (\gamma^2(z) \wedge 1)  \, \n(f, dz),
    \end{split}
\end{equation}
where the constant \(C > 0\) depends on \(g\).
Finally, using the boundedness of \(g\), it follows that
\begin{equation} \label{eq: estimate i_4}
    \begin{split}
        I_3 & \leq C \, \n (f, \{\gamma > R\}),
    \end{split}
\end{equation}
where the constant \(C > 0\) depends on \(g\). In particular, by the tightness assumption 
\[
\lim_{R \to \infty} \, \sup_{f \in F} \, \n \, (f, \{\gamma > R\}) = 0
\]
from Condition~\ref{cond: V1}, we get \(I_3 \to 0\) as \(R \to \infty\).
Combining \eqref{eq: estimate i_1} -- \eqref{eq: estimate i_4}, it follows that for every \(\varepsilon > 0\) there exists a constant \(\delta > 0\) such that, for all \(f \in F\) and all \(x,y \in K\) with \(\|x-y\| \leq \delta\), the estimate 
\[
 \Big| \int (g (t^0,x^0,k (f, x, z)) - g (t^0,x^0,k (f, y, z))) \, \n (f, dz) \Big| \leq \varepsilon
\]
holds. This completes the proof.
\end{proof}

For \(\phi \in  C^{1,2}_b(\bR_+ \times \mathbb{R}^d; \bR) \), we define the function
\(\g = \g(\phi) \colon \bR_+ \times \bR^d \times \bR^d \to \bR\) by
		\begin{equation} \label{eq: def g}
			\g(t,x,z) := \phi(t,x+z) - \phi(t,x) - \langle \nabla \phi(t,x), h(z) \rangle.    
		\end{equation}
		For notational convenience, we usually suppress the dependence of \(\g\) on \(\phi\) in the following.

\begin{lemma} \label{lem: non local 2}
    Suppose that Condition \ref{cond: V1} holds and let \( \phi \in  C^{2}_b(\bR_+ \times \mathbb{R}^d; \bR)\).
     Then, for every compact subset \(K \subset \bR^d\) and every \(\varepsilon > 0\), there exists a constant \(\delta = \delta(K, \varepsilon) > 0\) 
        such that
         \[
         \int \big| \g(t, x, k (f, x^0, z)) - \g (s, y, k(f, x^0, z)) \big| \, \n (f, dz)  \leq \varepsilon
        \]
         for all \(f \in F\), \(x^0 \in K\), \(t,s \in \bR_+\) and \(x,y \in \bR^d\) with 
         \(
         |t-s| + \| x-y\| \leq \delta.
         \)
\end{lemma}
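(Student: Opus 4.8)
The plan is to bound the integrand $\bigl|\g(t,x,k(f,x^0,z)) - \g(s,y,k(f,x^0,z))\bigr|$ by something of the form $C(\|k(f,x^0,z)\|^2 \wedge 1)$ times a modulus of $|t-s| + \|x-y\|$, and then integrate using the uniform integrability provided by the function $\gamma = \gamma(K)$ from Condition~\ref{cond: V1}. The key point is that, since $x^0$ ranges over the \emph{compact} set $K$, we have $\|k(f,x^0,z)\| \le \gamma(z)$ for all $f \in F$ and $z \in L$, so the relevant jump sizes are controlled uniformly in $(f,x^0)$.

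\textbf{Step 1 (a quantitative estimate on $\g$).} Recall $\g(r,w,\zeta) = \phi(r,w+\zeta) - \phi(r,w) - \langle \nabla\phi(r,w), h(\zeta)\rangle$. Using a second-order Taylor expansion together with the boundedness of $\phi$, $\nabla\phi$ and $\nabla^2\phi$ (which hold since $\phi \in C^2_b$), one gets $|\g(r,w,\zeta)| \le C(\|\zeta\|^2 \wedge 1)$ with $C$ depending only on $\|\phi\|_\infty$, $\|\nabla\phi\|_\infty$, $\|\nabla^2\phi\|_\infty$ and on a bound for $\|h\|$. More importantly, fixing $\zeta$ and varying $(r,w)$, the map $(r,w)\mapsto \g(r,w,\zeta)$ is jointly continuous; combining this with the growth bound, one obtains that for each fixed $\zeta$ and each $\eta>0$ there is a $\delta_\zeta>0$ with $|\g(t,x,\zeta)-\g(s,y,\zeta)| \le \eta$ whenever $|t-s|+\|x-y\|\le\delta_\zeta$, and simultaneously the crude bound $|\g(t,x,\zeta)-\g(s,y,\zeta)| \le 2C(\|\zeta\|^2\wedge 1)$ always. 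To make the choice of $\delta$ uniform in $\zeta$ over the relevant range $\|\zeta\|\le R$, note that since $\phi$ and its first two derivatives are uniformly continuous (on the relevant compact sets, or globally as $\phi\in C^2_b$ with $\nabla^2\phi$ uniformly continuous), the family $\{\g(\cdot,\cdot,\zeta) : \|\zeta\|\le R\}$ is equicontinuous; hence one $\delta = \delta(R,\eta)>0$ works for all $\|\zeta\|\le R$.

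\textbf{Step 2 (splitting the integral).} Fix a compact $K\subset\bR^d$ and $\varepsilon>0$; let $\gamma=\gamma(K)$ be as in Condition~\ref{cond: V1}. Split the domain of integration along $\{\gamma \le R\}$ and $\{\gamma > R\}$. On $\{\gamma>R\}$: using $|\g(t,x,k(f,x^0,z))-\g(s,y,k(f,x^0,z))| \le 2C(\|k(f,x^0,z)\|^2\wedge 1) \le 2C(\gamma^2(z)\wedge 1) \le 2C$ and the tightness assumption $\lim_{R\to\infty}\sup_{f\in F}\n(f,\{\gamma>R\})=0$ from Condition~\ref{cond: V1}, this piece is $\le 2C\sup_{f}\n(f,\{\gamma>R\})$, which is $\le \varepsilon/2$ for $R$ large, uniformly in $f$, $x^0\in K$ and in $t,s,x,y$. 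On $\{\gamma\le R\}$, we further need the part where jump sizes are small: use $|\g(t,x,\zeta)-\g(s,y,\zeta)| \le 2C(\|\zeta\|^2\wedge 1) \le 2C\gamma^2(z)$ together with $\lim_{\kappa\searrow 0}\sup_f\int_{\{\gamma\le\kappa\}}\gamma^2(z)\,\n(f,dz)=0$ to make $\int_{\{\gamma\le\kappa\}}|\cdots|\,\n(f,dz) \le \varepsilon/4$ for $\kappa$ small. On the remaining set $\{\kappa<\gamma\le R\}$ apply the equicontinuity from Step~1 with range $\|\zeta\|\le R$ (valid since $\|k(f,x^0,z)\|\le\gamma(z)\le R$ there) and target $\eta$: choosing $\delta$ so that $|\g(t,x,k(f,x^0,z))-\g(s,y,k(f,x^0,z))|\le\eta$ whenever $|t-s|+\|x-y\|\le\delta$, this piece is $\le \eta\cdot\n(f,\{\kappa<\gamma\le R\}) \le \eta\cdot\sup_f\int(\gamma^2(z)\wedge 1)\,\n(f,dz)\cdot(1/\kappa^2 \vee 1) $, which is $\le\varepsilon/4$ once $\eta$ is small enough (with $\kappa,R$ already fixed). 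Adding the three contributions gives the bound $\varepsilon$, and the $\delta$ thus produced depends only on $K$ and $\varepsilon$, as required.

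\textbf{Main obstacle.} The delicate point is that $\g$ is evaluated at the \emph{varying} argument $k(f,x^0,z)$ rather than at a fixed $z$, and the outer variables $(t,x)$ and $(s,y)$ are \emph{unrestricted} in $\bR^d$ (only their difference is small). The first is handled by the bound $\|k(f,x^0,z)\|\le\gamma(z)$ valid uniformly over $f$ and $x^0\in K$, which reduces everything to controlling $\g$ on jump sizes $\le\gamma(z)$ with the uniform-in-$f$ integrability properties of $\gamma$. The second --- non-compactness of the range of $(t,x),(s,y)$ --- is precisely why one needs the \emph{uniform} continuity of $\phi,\nabla\phi,\nabla^2\phi$ (from $\phi\in C^2_b$ with $\nabla^2\phi$ uniformly continuous): this upgrades the pointwise continuity of $(r,w)\mapsto\g(r,w,\zeta)$ to the equicontinuity over $\{\|\zeta\|\le R\}$ needed for a single $\delta$ to work. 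This is a close analogue of Lemma~\ref{lem: non local 1}, with the roles of the "inner" and "outer" spatial variables interchanged, and the proof structure mirrors that of Lemma~\ref{lem: non local 1}.
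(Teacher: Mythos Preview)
Your proposal is correct and follows essentially the same route as the paper: a three-way split of the integration domain into $\{\gamma\le\kappa\}$, $\{\kappa<\gamma\le R\}$, $\{\gamma>R\}$, with the crude Taylor/boundedness bound plus the tightness assumptions handling the small- and large-jump pieces, and uniform continuity of $(r,w)\mapsto\g(r,w,\zeta)$ handling the middle piece. One small correction: in Step~1 you appeal to uniform continuity of $\nabla^2\phi$, which is \emph{not} part of the hypothesis $\phi\in C^2_b$; the paper instead notes that $C^2_b$ already makes $\phi$ and $\nabla\phi$ globally Lipschitz, and this alone yields the equicontinuity bound
\[
|\g(t,x,\zeta)-\g(s,y,\zeta)|\le 2\,\ka(|t-s|+\|x-y\|)+\|h\|_\infty\,\ka(|t-s|+\|x-y\|)
\]
uniformly over \emph{all} $\zeta$ (so your restriction to $\|\zeta\|\le R$ is not even needed at that step).
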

\begin{proof}
 As \(\phi \in C^2_b (\bR_+ \times \bR^d; \bR)\), there exists a modulus of continuity \(\ka \colon [0, \infty] \to [0, \infty]\) such that 
\[
| \phi (t, x) - \phi (s, y) | + \| \nabla \phi (t, x) - \nabla \phi (s, y) \| \leq \ka (|t - s| + \|x - y\|)
\]
for all \((t, x), (s, y) \in \bR_+ \times \bR^d\).
From now on, fix an \(\varepsilon > 0\).

Thanks to the tightness assumptions from Condition~\ref{cond: V1}, there exist constants \(\kappa = \kappa (\varepsilon) \in (0, 1)\) and \(R = R (\varepsilon) > 1\) such that 
\[
\sup_{f \in F} \, \int_{\{\gamma \leq \kappa\}} \gamma^2 (z) \, \n \, (f, dz) \leq \varepsilon, \quad \sup_{f \in F} \, \n \, (f, \{\gamma > R\}) \leq \varepsilon.
\]
W.l.o.g., we can take \(\kappa \in (0, 1)\) such that \(h (x) = x\) for \(\|x\| \leq \kappa\).
Fix \(f \in F\), a compact set \(K \subset \bR^d\), \(x^0 \in K\), \(t,s \in \bR_+\) and \(x,y \in \bR^d\), 
and set 
\begin{align*}
I_1 &:= \int_{\{\gamma  \leq \kappa\}} \big| \g(t, x, k (f, x^0, z)) - \g (s, y, k(f, x^0, z)) \big| \, \n (f, dz),
\\
I_2 &:= \int_{\{\kappa < \gamma \leq R\}} \big| \g(t, x, k (f, x^0, z)) - \g (s, y, k(f, x^0, z)) \big| \, \n (f, dz), 
\\
I_3 &:= \int_{\{\gamma  > R\}} \big| \g(t, x, k (f, x^0, z)) - \g (s, y, k(f, x^0, z)) \big| \, \n (f, dz),
\end{align*}
where \(\gamma = \gamma \, (K) \colon L \to \bR_+\) is as in Condition~\ref{cond: V1}.
In the following, we estimate each of these terms separately. 
We deduce from Taylor's theorem that  
\begin{align*}
 \Big| \big(\phi(t&, x + k(f,x^0,z))  - \phi(t, x) - \langle \nabla \phi(t, x),   k(f,x^0,z) \rangle \big)  
\\& \qquad  -  \big(\phi(s, y +  k(f,x^0,z))  - \phi(s, y) - \langle \nabla \phi(s, y),   k(f,x^0,z) \rangle \big) \Big|
\leq C \, \| k(f,x^0,z)\|^2, \phantom \int
\end{align*}
for every \(z \in L\), where the constant only depends on \(\phi\).
Hence, by our choice of \(\kappa\), we obtain that
\begin{align} \label{eq: I1 - v2}
	I_1 &\leq C\, \, \int_{\{\gamma \leq \kappa\}} \gamma^2 (z) \, \n (f, dz) \leq C \, \varepsilon.
\end{align}

Next, we estimate \(I_2\).
For all \(u \in \mathbb{R}^d\), we have
\begin{align*}
 \big| \langle \nabla \phi (t, x) - \nabla \phi (s, y), h(u) \rangle \big| &\leq C \, \ka  (|t - s| + \|x - y\|),
\end{align*}
where the constant \(C > 0\) depends on \(h\).
 Moreover, for all \(z \in L\), 
\begin{align*} 
\Big|  \big(\phi(t, x + k(f,x^0,z)) &- \phi(t, x) \big) -  \big(\phi(s, y + k(f,x^0,z))  - \phi(s, y)\big) \Big| \leq 2 \, \ka (|t - s| + \|x - y\|).
\end{align*}
Consequently, we get that 
\begin{equation} \begin{split} \label{eq: I2 - v2}
	I_2 &\leq \ka (|t - s| + \|x - y\|) \, C \, \n (f, \{\kappa < \gamma \leq R\} ) 
 \\&\leq \ka (|t - s| + \|x - y\|) \, C \, \int ( \gamma^2 (z) \wedge 1 )\, \n (f, dz),
\end{split}
\end{equation}
where the constant might depend on \(\kappa\) but it is independent of \(R\). 

Finally, we estimate \(I_3\). Notice that \(\g\) is bounded. Hence, we obtain 
\begin{align} \label{eq: l3 - v3}
	I_3 \leq C \, \n (f, \{\gamma > R\}) \leq C \, \varepsilon, 
	\end{align}
by the choice of \(R\). Here, the constant \(C > 0\) only depends on \(\g\), i.e., on \(\phi\) and \(h\).
All together, \eqref{eq: I1 - v2} -- \eqref{eq: l3 - v3} yield the existence of a \(\delta > 0\) such that, for all \((t, x), (s, y) \in \bR_+ \times \bR^d\) with \(|t - s| + \|x - y\| \leq \delta\), 
\[
 \int \big| \g(t, x, k (f, x^0, z)) - \g (s, y, k(f, x^0, z)) \big| \, \n (f, dz) \leq C \, \varepsilon,
\]
where the constant \(C > 0\) only depends on \(\phi\) and \(h\).
This implies the claim. 
\end{proof}

\begin{lemma} \label{lem: G cont}
    Suppose that Condition \ref{cond: V1} holds.
    For every \(\phi \in C^{2}_b(\bR_+ \times \mathbb{R}^d; \bR)\), the map
    \((t,x) \mapsto G(x, \phi(t,\cdot\,))\)
    is continuous.
\end{lemma}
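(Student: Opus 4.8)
The plan is to exploit that $(t,x)\mapsto G(x,\phi(t,\cdot\,))$ is, for each fixed $\phi$, a supremum over $f\in F$ of the ``frozen'' functionals
\[
L^f_{t,x} := \langle \nabla\phi(t,x), b(f,x)\rangle + \tfrac12\,\on{tr}\big[\nabla^2\phi(t,x)\,\sigma\sigma^*(f,x)\big] + \int \g\big(t,x,k(f,x,z)\big)\,\n(f,dz),
\]
with $\g=\g(\phi)$ as in \eqref{eq: def g}, and to deduce joint continuity from \emph{equicontinuity in $f$}. Since $|\sup_f a_f-\sup_f c_f|\le\sup_f|a_f-c_f|$, it suffices to fix $(t^0,x^0)\in\bR_+\times\bR^d$, put $K:=\{\,y\in\bR^d:\|y-x^0\|\le 1\,\}$, and show $\sup_{f\in F}|L^f_{t,x}-L^f_{t^0,x^0}|\to 0$ as $(t,x)\to(t^0,x^0)$. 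I would treat the drift, diffusion, and non-local contributions separately.

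For the drift term I would split $\langle\nabla\phi(t,x),b(f,x)\rangle-\langle\nabla\phi(t^0,x^0),b(f,x^0)\rangle$ into $\langle\nabla\phi(t,x)-\nabla\phi(t^0,x^0),b(f,x)\rangle+\langle\nabla\phi(t^0,x^0),b(f,x)-b(f,x^0)\rangle$, bounding the first summand by $\|\nabla\phi(t,x)-\nabla\phi(t^0,x^0)\|\cdot\sup_{f\in F,\,y\in K}\|b(f,y)\|$ — the supremum being finite by Condition~\ref{cond: V1}(i) and the prefactor tending to $0$ by continuity of $\nabla\phi$ (which holds since $\phi\in C^2_b$) — and the second by $\|\nabla\phi\|_\infty\cdot\ka(\|x-x^0\|)$, with $\ka=\ka(K)$ the modulus from Condition~\ref{cond: V1}(iii). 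The diffusion term is handled identically: split $\on{tr}[\nabla^2\phi(t,x)\sigma\sigma^*(f,x)]-\on{tr}[\nabla^2\phi(t^0,x^0)\sigma\sigma^*(f,x^0)]$, control one piece by $\|\nabla^2\phi(t,x)-\nabla^2\phi(t^0,x^0)\|\cdot\sup_{f\in F,\,y\in K}\|\sigma\sigma^*(f,y)\|$ (finite by Condition~\ref{cond: V1}(i), prefactor $\to 0$ by continuity of $\nabla^2\phi$ — only continuity at the single point $(t^0,x^0)$, not uniform continuity, is needed here) and the other by $\|\nabla^2\phi\|_\infty\cdot\ka(\|x-x^0\|)$ using the modulus of $\sigma\sigma^*$ from Condition~\ref{cond: V1}(iii).

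For the non-local term I would interpolate through $\int\g(t,x,k(f,x^0,z))\,\n(f,dz)$:
\begin{align*}
\Big|\int\g(t,x,k(f,x,z))\,\n(f,dz) &- \int\g(t^0,x^0,k(f,x^0,z))\,\n(f,dz)\Big| \\
&\le \int\big|\g(t,x,k(f,x,z))-\g(t,x,k(f,x^0,z))\big|\,\n(f,dz) \\
&\quad + \int\big|\g(t,x,k(f,x^0,z))-\g(t^0,x^0,k(f,x^0,z))\big|\,\n(f,dz).
\end{align*}
The first integral on the right is controlled by Lemma~\ref{lem: non local 1} applied with $g:=\g(\phi)$; before invoking it one checks that $\g(\phi)$ meets that lemma's hypotheses, i.e.\ $|\g(t,x,z)|\le C(\|z\|^2\wedge 1)$ (Taylor expansion in $z$ on the region where $h(z)=z$, and boundedness of $\phi,\nabla\phi$ otherwise) and $|\g(t,x,z)-\g(t,x,w)|\le C\|z-w\|$ (from $\|\nabla\phi\|_\infty<\infty$ and Lipschitz continuity of $h$); this yields, for any $\varepsilon>0$, a $\delta>0$ making that integral $\le\varepsilon$ for all $f\in F$, all $(t,x)$, and all $x,x^0\in K$ with $\|x-x^0\|\le\delta$. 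The second integral is exactly of the form controlled by Lemma~\ref{lem: non local 2}, giving $\le\varepsilon$ for all $f\in F$ once $x^0\in K$ and $|t-t^0|+\|x-x^0\|\le\delta'$. Collecting the three termwise bounds and choosing $(t,x)$ close enough to $(t^0,x^0)$ completes the proof. There is no genuine obstacle, as the analytic substance is already packaged in Lemmas~\ref{lem: non local 1} and~\ref{lem: non local 2}; the only thing to watch is that \emph{every} estimate is uniform over $f\in F$ — which is exactly why Condition~\ref{cond: V1}(i),(iii) and those two lemmas are phrased with suprema over $F$ — and to fix the compact neighbourhood $K$ of $x^0$ before applying the lemmas so that the associated constants $\ka(K),\gamma(K)$ and the resulting $\delta$'s are in force.
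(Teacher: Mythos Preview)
Your proposal is correct and follows essentially the same approach as the paper: both bound $|G(x,\phi(t,\cdot))-G(y,\phi(s,\cdot))|$ by $\sup_{f\in F}$ of the sum of drift, diffusion, and non-local differences, dispatch the local parts via Condition~\ref{cond: V1}(i),(iii), and control the non-local part through Lemmata~\ref{lem: non local 1} and~\ref{lem: non local 2}. Your write-up is more explicit than the paper's---you spell out the interpolation step $\int\g(t,x,k(f,x^0,z))\,\n(f,dz)$ and verify the hypotheses of Lemma~\ref{lem: non local 1} for $\g(\phi)$---but the substance is identical.
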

\begin{proof}
Fix \(\phi \in  C^{2}_b(\bR_+ \times \mathbb{R}^d; \bR)\).
Notice that, for \(s,t \in \bR_+\) and \(x,y \in \bR^d\),
\begin{align*}
    \big| G(x,\phi(t,\cdot\,)) &- G(y, \phi(s,\cdot\,)) \big|  
    \\&\leq \sup \Big\{ 
    \big| \langle \nabla \phi (t,x), b (f, x) \rangle - \langle \nabla \phi (s,y), b (f, y) \rangle \big|
    \\& \hspace{2cm} + \tfrac{1}{2} \big| \on{tr} \big[ \nabla^2\phi (t, x) \sigma \sigma^* (f, x) \big] - \on{tr} \big[ \nabla^2\phi (s, y) \sigma \sigma^* (f, y) \big] \big| \phantom \int
    \\& \hspace{2cm} + \int \big| \g(t,x,k(f,x,z)) - \g(s,y,k(f,y,z)) \big| \, \n(f, dz)
    \colon f \in F \Big\}.
\end{align*}
The local term 
\begin{align*}
    \big| \langle \nabla \phi (t,x), & b (f, x) \rangle - \langle \nabla \phi (s,y), b (f, y) \rangle \big|
     \\&+ \tfrac{1}{2} \big| \on{tr} \big[ \nabla^2\phi (t, x) \sigma \sigma^* (f, x) \big] - \on{tr} \big[ \nabla^2\phi (s, y) \sigma \sigma^* (f, y) \big] \big|
\end{align*}
vanishes, uniformly in \(f \in F\), as \(t \to s\) and \(x \to y\), by part (i) and (iii) of Condition \ref{cond: V1}. Further, using Lemmata \ref{lem: non local 1} and \ref{lem: non local 2}, we see that the non-local part, i.e., 
\[
\sup_{f \in F} \int \big| \g(t,x,k(f,x,z)) - \g(s,y,k(f,y,z)) \big| \, \n(f, dz),
\]
vanishes, too.
This completes the proof.
\end{proof}

\subsection{Subsolution Property} \label{sec: pf subsolution}
In this section we establish the subsolution property of \(v^*\), i.e., part (i) of Theorem~\ref{thm: main V}. The argument borrows ideas from the proofs of \cite[Lemma~2.38]{CN22b} and \cite[Lemma~6.1]{K19}. 
\begin{lemma} \label{lem: en sub}
Suppose that Condition~\ref{cond: V1} holds. Then, \(v^*\) is a viscosity subsolution to the nonlinear PDE \eqref{eq: G eq}.
\end{lemma}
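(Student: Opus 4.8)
The plan is to show that $v^*$ satisfies the subsolution inequality at every point where it is touched from above by a smooth test function, exploiting the dynamic programming principle hidden in the nonlinear Markov property (Proposition~\ref{prop: markov property}) together with the existence of constant-control solutions (Proposition~\ref{prop: existence}). The initial condition $v^*(0,\cdot\,) \leq \psi$ is already supplied by Lemma~\ref{lem: initial values}, so only the interior inequality (b) in the definition of subsolution needs attention.

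Let $\phi \in C^\infty_b(\bR_+ \times \bR^d; \bR)$ with $v^* \leq \phi$ and $\phi(t^0, x^0) = v^*(t^0, x^0)$ for some $(t^0, x^0) \in (0,\infty) \times \bR^d$; I want to prove $\partial_t \phi(t^0, x^0) - G(x^0, \phi(t^0, \cdot\,)) \leq 0$. First I would pick sequences $t^n \to t^0$, $x^n \to x^0$ with $v(t^n, x^n) \to v^*(t^0, x^0)$. Suppose, for contradiction, that $\partial_t \phi(t^0, x^0) - G(x^0, \phi(t^0, \cdot\,)) = \eta > 0$. By the continuity of $(t,x) \mapsto \partial_t\phi(t,x)$ and, crucially, by the continuity of $(t,x)\mapsto G(x,\phi(t,\cdot\,))$ established in Lemma~\ref{lem: G cont}, there is a ball $B_\rho(t^0, x^0) \cap ((0,\infty)\times\bR^d)$ on which $\partial_t \phi - G(\cdot\,, \phi) \geq \eta/2$. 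The key dynamic-programming step is then: for $n$ large, using the semigroup/Markov property $v(t^n, x^n) = \sup_{P \in \cR(x^n)} E^P[\,v(t^n - s, X_s)\,]$ for $s < t^n$, and picking a particular $P^n \in \cR(x^n)$ generated by a constant control $f$ (available by Proposition~\ref{prop: existence}), one applies Itô's formula (Dynkin's formula for semimartingales with the given characteristics) to $s \mapsto \phi(t^n - s, X_s)$. This yields, for a small deterministic time $s_n$,
\[
E^{P^n}\big[\phi(t^n - s_n, X_{s_n})\big] - \phi(t^n, x^n) = E^{P^n}\!\Big[\int_0^{s_n}\big({-}\partial_t\phi + \mathcal{L}^f\phi\big)(t^n - r, X_r)\,dr\Big],
\]
where $\mathcal{L}^f$ is the Lévy-type generator corresponding to the control $f$, which is dominated by $G$.

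The contradiction is produced as follows: on the one hand $v(t^n - s_n, X_{s_n}) \leq \phi(t^n - s_n, X_{s_n})$ everywhere, so $v(t^n, x^n) = \sup_P E^P[v(t^n - s_n, X_{s_n})] \leq \sup_P E^P[\phi(t^n - s_n, X_{s_n})]$; on the other hand, taking the integrand estimate $-\partial_t\phi + \mathcal{L}^f\phi \leq -\partial_t\phi + G(\cdot\,,\phi) \leq -\eta/2$ on the ball, and controlling the contribution of the event that $X$ has left the ball by the maximal inequality (Corollary~\ref{cor: maximal inequality}, which gives that this probability is $o(1)$ as $s_n \to 0$ uniformly over the relevant starting points and that the stray term is $O(s_n \cdot o(1))$ in view of boundedness of $\phi$), one gets $E^{P^n}[\phi(t^n - s_n, X_{s_n})] \leq \phi(t^n, x^n) - \eta s_n / 4$ for $n$ large and $s_n$ suitably chosen. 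Combining and letting $n \to \infty$ with $\phi(t^n, x^n) \to v^*(t^0, x^0)$ and $v(t^n, x^n) \to v^*(t^0, x^0)$ contradicts $v(t^n,x^n) \le \phi(t^n - s_n, X_{s_n})$-averages only if $\eta s_n/4$ can be made to dominate the errors; this forces a careful but standard choice $s_n \to 0$ slowly enough that $\phi(t^n, x^n) - v(t^n, x^n) = o(s_n)$ while $s_n$ stays within the ball.

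**The main obstacle** I anticipate is the bookkeeping in the Dynkin/Itô step for semimartingales with merely absolutely continuous characteristics lying in $\Theta(X)$: one must justify that the compensated jump integral is a true martingale (not just a local one) before taking expectations — here the integrability bound in Condition~\ref{cond: V1}(iii), namely $\sup_f \int (\gamma^2 \wedge 1)\,\n(f,dz) < \infty$ together with the local boundedness of the coefficients on the compact neighbourhood of $x^0$, should suffice, possibly after localizing by the exit time from the ball — and, intertwined with this, that the non-local part of $\mathcal{L}^f\phi$ is genuinely controlled by $G$ uniformly in $f$, which is exactly what Lemmata~\ref{lem: non local 1} and~\ref{lem: non local 2} were set up to deliver. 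The second delicate point is the uniformity: all estimates (the ball radius $\rho$, the maximal-inequality bound, the localization error) must be uniform over the controls $f \in F$ and over the starting points $x^n$ in a fixed compact neighbourhood of $x^0$, which is why Condition~\ref{cond: V1} is phrased with suprema over $F$ and over compact sets; assembling these uniform bounds correctly is where most of the real work lies.
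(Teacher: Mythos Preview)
Your proposal contains a genuine conceptual error: you invoke Proposition~\ref{prop: existence} to pick a \emph{single} constant-control law \(P^n \in \cR(x^n)\) and run the It\^o argument only for that measure. This is the device appropriate for the \emph{super}solution (where one needs a lower bound on \(\sup_{P}E^P[\,\cdot\,]\), so a single admissible \(P\) suffices). For the \emph{sub}solution you need an upper bound on
\[
\sup_{P \in \cR(x^n)} E^P\big[\phi(t^n - s_n, X_{s_n})\big],
\]
because the dynamic programming inequality only gives \(v(t^n,x^n)\le \sup_{P}E^P[\phi(t^n-s_n,X_{s_n})]\). Your bound \(E^{P^n}[\phi(\ldots)]\le \phi(t^n,x^n)-\eta s_n/4\) for one particular \(P^n\) says nothing about the supremum over all \(P\), so no contradiction follows.

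The fix is straightforward and is what the paper does: take an \emph{arbitrary} \(P\in\cR(x^n)\), apply It\^o's formula up to the exit time \(\tau^n_r\), and use that \((\llambda\otimes P)\)-a.e.\ the density triple \((b^P,a^P,\nu^P)\) lies in \(\Theta(X)\). This gives
\[
\langle b^P_s,\nabla\phi\rangle + \tfrac12\operatorname{tr}[\nabla^2\phi\,a^P_s] + \int \g\,\nu^P(ds,dz)
\ \le\ G\big(X_s,\phi(t^n-s,\cdot\,)\big)
\]
pointwise (up to the error terms you already anticipate, controlled via Lemmata~\ref{lem: non local 1}, \ref{lem: non local 2} and Corollary~\ref{cor: maximal inequality}), and this bound is uniform in \(P\). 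With that in hand, your contradiction scheme goes through; alternatively, one can argue directly as the paper does, without contradiction. The remaining ingredients you list (localization by \(\tau^n_r\), the maximal inequality to control exit probabilities, continuity of \(G(\cdot,\phi(t,\cdot))\) from Lemma~\ref{lem: G cont}) are all correct and used in the paper in essentially the way you describe.
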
 
\begin{proof}
First, observe that \(v^*(0,x) = \psi(x)\) by Lemma \ref{lem: initial values}.
Let \(\phi \in  C^\infty_b(\bR_+ \times \bR^d; \bR)\) such that \(\phi \geq v^*\) and \(\phi (t^0, x^0) = v^*(t^0, x^0)\) for some \((t^0, x^0) \in (0, \infty) \times \bR^d\). 
There exists a sequence \((t^n, x^n)_{n = 1}^\infty \subset \bR_+ \times \bR^d\) such that \((t^n, x^n) \to (t^0, x^0)\) and 
\[
v^* (t^0, x^0) = \lim_{n \to \infty} v (t^n, x^n). 
\]
We take an arbitrary \(0 < u < t_* := \inf_{n \in \mathbb{Z}_+} t^n\), where, w.l.o.g., we can assume that \(t_* > 0\).
By the semigroup property of \((S_t)_{t \in \bR_+}\), for every \(n \in \mathbb{N}\), we obtain that 
\begin{equation} \label{eq: main ev upper}
	\begin{split}
		0 &= \sup_{P \in \cR (x^n)} E^P \big[ v (t^n - u, X_u) \big] - v (t^n, x^n)
		\\&\leq \sup_{P \in \cR (x^n)} E^P \big[ v^* (t^n - u, X_u) \big] - v^* (t^0, x^0) + v^* (t^0, x^0) - v (t^n, x^n)
  \\&= \sup_{P \in \cR (x^n)} E^P \big[ v^* (t^n - u, X_u) \big] - \phi (t^0, x^0) + v^* (t^0, x^0) - v (t^n, x^n)
		\\&\leq \sup_{P \in \cR (x^n)} E^P \big[ \phi (t^n - u, X_u) \big] - \phi (t^n, x^n)  \\&\hspace{3cm} + \phi (t^n, x^n) - \phi (t^0, x^0) + v^* (t^0, x^0) - v (t^n, x^n).
	\end{split}
\end{equation}
Fix \(n \in \bN\), and define, for \(r > 0\), the stopping time 
\begin{align*}
\tau^n_r := \inf \Big\{t \geq 0 \colon \sup_{u \in [0, t]} \|X_u - x^n \| \geq r \Big\}.
\end{align*}
Take \(P \in \cR(x^n)\) and denote the Lebesgue densities of the \(P\)-characteristics of \(X\) by \((b^P, a^P, \nu^P)\).
It\^o's formula yields that \(P\)-a.s.
\begin{align*}
\phi (t^n - (u \wedge \tau^n_r), X_{u \wedge \tau^n_r}) & - \phi (t^n, x^n)  = - \int_0^{u\wedge \tau^n_r} \partial_t \phi (t^n - s, X_s) ds
\\&+ \int_0^{u\wedge \tau^n_r} \Big[ \langle b^P_s,  \nabla \phi (t^n - s, X_s) \rangle + \tfrac{1}{2} \on{tr} \big[ \nabla^2 \phi (t^n - s, X_s) a^P_s \big]   \Big] ds
\\&+ \int_0^{u\wedge \tau^n_r} \int  \g(t^n - s, X_{s-}, z) \mu^X(ds,dz)
\\&+ \text{local \(P\)-martingale}, \phantom \int
\end{align*}
where the function \(\g\) is defined in \eqref{eq: def g}.
Thanks to the local boundedness Condition \ref{cond: V1} and the definition of \(\tau^n_r\), 
it follows that the local \(P\)-martingale part is actually a true \(P\)-martingale. Hence, for all \(n \in \mathbb{N}\), we obtain that 
\begin{align*}
E^P \big[ & \phi (t^n - (u \wedge \tau^n_r), X_{u \wedge \tau^n_r}) - \phi (t^n, x^n) \big] 
\\&= E^P \Big[ - \int_0^{u\wedge \tau^n_r}  \partial_t \phi (t^n - s, X_s) + \langle b^P_s, \nabla \phi (t^n - s, X_s) \rangle + \tfrac{1}{2} \on{tr} \big[ \nabla^2 \phi (t^n - s, X_s) a^P_s \big] ds \Big]
\\&\qquad \quad + E^P \Big[ \int_0^{u\wedge \tau^n_r} \int \g(t^n - s, X_{s}, z) \nu^P(ds,dz) \Big]
\\&=: I^n_u. \phantom \int
\end{align*}
For \(n \in \bN\), set
\begin{equation} \label{eq: def j^n}
\begin{split}
    J^n_1(u,P) & := E^P \Big[ \int_0^{u \wedge \tau^n_r} \big| \partial_t \phi (t^n - s , X_s) - \partial_t \phi (t^n, x^n) \big| ds\Big], \\
    J^n_2(u,P) & := E^P \Big[ \int_0^{u \wedge \tau^n_r} \big| \langle b^P_s, \nabla \phi (t^n - s , X_s) - \nabla \phi (t^n, x^n) \rangle \big| ds \Big], \\
    J^n_3(u,P) & := E^P \Big[ \int_0^{u \wedge \tau^n_r}  \big| \on{tr} \big[ ( \nabla^2 \phi (t^n - s , X_s) - \nabla^2 \phi (t^n, x^n) ) a^P_s \big ] \big| ds \Big], \\
    J^n_4(u,P) & :=  E^P \Big[ \int_0^{u \wedge \tau^n_r} \int \big| 
    \g(t^n-s, X_{s}, z) - \g(t^n, x^n, z) \big|
    \nu^P(ds,dz) \Big].
\end{split}
\end{equation}
Notice that
\begin{align*}
     - E^P \Big[\int_0^{u \wedge \tau^n_r}  \partial_t \phi (t^n - s, X_s) ds \Big] &\leq - u \partial_t \phi (t^n, x^n)  + J^n_1(u,P),\\
     E^P \Big[\int_0^{u \wedge \tau^n_r} \langle b^P_s, \nabla \phi (t^n - s, X_s) \rangle ds \Big] &\leq  E^P \Big[ \int_0^{u \wedge \tau^n_r} \langle b^P_s, \nabla \phi (t^n, x^n) \rangle  ds \Big] + J^n_2(u,P), \\
     E^P \Big[\int_0^{u \wedge \tau^n_r} \on{tr} \big[ \nabla^2 \phi (t^n - s, X_s) a^P_s \big] ds \Big] &\leq E^P \Big[ \int_0^{u \wedge \tau^n_r} \on{tr} \big[\nabla^2 \phi (t^n, x^n) a^P_s \big] ds \Big] + J^n_3(u,P), 
     \end{align*}
     and 
     \begin{align*}
    E^P \Big[ \int_0^{u \wedge \tau^n_r} \int \g(t^n - s, X_{s}&, z) \nu^P(ds,dz) \Big] 
    \\&\leq 
    E^P \Big[ \int_0^{u \wedge \tau^n_r} \int  \g(t^n, x^n, z)  \nu^P(ds,dz) \Big] + J^n_4(u,P).
\end{align*}
Hence, we get that
\begin{equation} \label{eq: bound i^n}
	\begin{split}
 I^n_u \leq - u \partial_t \phi(t^n, x^n) + E^P \Big[ \int_0^{u \wedge \tau^n_r} G^n (X_s) ds \Big] &+ J^n_1(u,P) + J^n_2(u,P) 
 \\&+ \tfrac{1}{2} J^n_3(u,P) + J^n_4(u,P)
 \end{split}
\end{equation}
for all \(n \in \mathbb{N}\),
where 
\begin{equation} \begin{split}
  G^n (x) := \sup \Big\{ \langle b (f, x),  \nabla  \phi (t^n, x^n) \rangle &+ \tfrac{1}{2} \on{tr} \big[ \nabla^2 \phi (t^n, x^n) \sigma \sigma^*(f,x) \big ] 
  \\&+ \int\, \g (t^n, x^n, k (f, x, z) ) \, \n (f, dz) \colon f \in F \Big\}.
\end{split} \end{equation}
As \(\phi \in C^\infty_b(\bR_+ \times \bR^d; \bR)\), the derivatives \(\partial_t \phi, \nabla \phi\) and \(\nabla^2 \phi\) are (globally) Lipschitz continuous jointly in time and space. In particular, they are uniformly continuous.
Hence, we can choose for every \(\varepsilon>0\) a constant \(\delta > 0\) such that
\begin{equation} \label{eq: eps delta}
    \begin{split}
\| \nabla^k \phi(t,x) - \nabla^k \phi(s,y) \| &+ | \partial_t \phi(t,x) - \partial_t \phi(s,y) |
 \leq \varepsilon
    \end{split}
\end{equation}
for \(k = 0, 1, 2\) and all \((t, x), (s, y) \in \bR_+ \times \bR^d\) such that
\(
|s-t| + \| x - y \| \leq \delta.
\)
Hence, we obtain, for \(0 < u < \tfrac{\delta}{2} \wedge t_*\), that 
\begin{equation} \label{eq: bound j_1}
\begin{split}
J^n_1(u,P) &\leq \int_0^u \Big( \varepsilon + 2 \| \partial_t \phi \|_\infty P \Big( \|X_s - x^n\| > \delta/2 \Big) \Big) ds 
\\& \leq C_1 u\Big( \varepsilon + \sup_{Q \in \cR(x^n)} Q\Big( \sup_{s \in [0,u]} \|X_s - x^n\| > \delta/2\Big)\Big),
\end{split}  
\end{equation}
for some constant \(C_1 > 0\) that is independent of \(n \in \bN\).
Similarly, using also the local boundedness Condition~\ref{cond: V1}, we obtain that 
\begin{equation} \label{eq: bound j_2}
    \begin{split}
        J^n_2(u,P) &\leq \Big(\sup_{f \in F} \sup_{\|y-x^n\| \leq r} \|b(f,y)\| \Big) \Big(\varepsilon u +  2 u \|\nabla \phi \|_\infty P\Big( \sup_{s \in [0,u]} \|X_s - x^n\| > \delta/2 \Big)\Big) 
 \\& \leq C_2 u\Big( \varepsilon + \sup_{Q \in \cR(x^n)} Q\Big( \sup_{s \in [0,u]} \|X_s - x^n\| > \delta/2 \Big)\Big),
    \end{split}
\end{equation}
and that 
\begin{equation} \label{eq: bound j_3}
    \begin{split}
        J^n_3(u,P) \leq C_3 u \Big( \varepsilon + \sup_{Q \in \cR(x^n)} Q\Big( \sup_{s \in [0,u]} \|X_s - x^n\| > \delta/2\Big)\Big),
    \end{split}
\end{equation}
where, again, \(0 < u < \tfrac{\delta}{2} \wedge t_*\) and the constants \(C_2, C_3 >0\) are independent of \(n \in \bN\) due to the boundedness of \(\{x^n \colon n \in \bN\}\).

Regarding the estimate of \(J^n_4(u,P)\), it follows as in Step 1 from the proof of \cite[Lemma~6.1]{K19}
that there exists a constant \(C_4 > 0\) such that
\[
\big| \g(t^n - s, X_{s}, z) - \g(t^n, x^n, z) \big| \leq C_4 (\|z\|^2 \wedge 1) \big( \varepsilon  + \1_{\{\| X_{s} - x^n\| \, >\, \delta/2\}} \big)
\]
for all \(s < u < \frac{\delta}{2} \wedge t_*\).
Hence, we obtain the estimate
\begin{equation} \label{eq: bound j_4}
    \begin{split}
            J^n_4 &\leq  C_4  u \, \Big(\sup_{f \in F} \sup_{\|y-x^n\| \leq r} \int ( \| k(f,y,z) \|^2 \wedge 1) \, \n(f, dz) \Big)  
 \\&\hspace{5cm}   \Big( \varepsilon   + P\Big( \sup_{s \in [0,u]} \|X_s - x^n \| > \delta/2 \Big) \Big)
    \\& \leq C'_4 u \Big( \varepsilon + \sup_{Q \in \cR(x^n)} Q\Big( \sup_{s \in [0,u]} \|X_s - x^n\| > \delta/2\Big)\Big),
    \end{split}
\end{equation}
for some constant \(C'_4 > 0\) that is independent of \(n \in \bN\).

In summary, we conclude from \eqref{eq: bound i^n} and \eqref{eq: bound j_1} -- \eqref{eq: bound j_4} that there exists a constant \(C >0\), independent of \(n \in \bN\), such that 
\begin{equation} \label{eq: bound i^n 2}
    \begin{split}
        	I^n_u \leq - u \partial_t & \phi (t^n, x^n) + E^P \Big[ \int_0^{u \wedge \tau^n_r} G^n (X_s) ds \Big]  \\&+ C u\Big( \varepsilon + \sup_{Q \in \cR(x^n)} Q\Big( \sup_{s \in [0,u]} \|X_s - x^n\| > \delta/2\Big)\Big).
    \end{split}
\end{equation}
We have, for all \(x \in \bR^d\),  
\begin{align*}
	G^n (x) & \leq G (x^n, \phi(t^n,\cdot\,)) + C \sup \Big\{ \| b (f, x)- b(f,x^n) \| + \tfrac{1}{2} \|\sigma \sigma^* (f, x) - \sigma \sigma^*(f,x^n) \| 
 \\& \qquad + \int \big| \g(t^n,x^n, k(f,x,z) - \g(t^n,x^n, k(f, x^n,z)) \big| \, \n(f, dz) \colon f \in F \Big\}, 
\end{align*}
for some constant \(C>0\) that is independent of \(n\in\bN\).
Define
\begin{equation} \label{eq: def j_5}
\begin{split}
   J^n_5(u,P) & := E^P \Big[ \int_0^{u \wedge \tau^n_r}  \sup_{f \in F} \Big( \| b (f, X_s)- b(f,x^n) \| + \tfrac{1}{2} \|\sigma \sigma^* (f, X_s) - \sigma \sigma^*(f,x^n) \| 
 \\& \qquad + \int \big| \g(t^n,x^n, k(f,X_s,z) - \g(t^n,x^n, k(f, x^n,z)) \big|\, \n(f, dz) \Big) ds 
    \Big]
\end{split}
\end{equation}
Again, we estimate, for \(f \in F\) and \(s < u \wedge \tau^n_r\),
\begin{align*}
    \| b (f, X_s)- b(f,x^n) \| & \leq \varepsilon + 2 \sup_{f \in F} \sup_{\|y-x^n\| \leq r} \|b(f,y)\| \1_{\{\|X_s - x^n \| \, >\, \delta/2\}},
    \\  \|\sigma \sigma^* (f, X_s) - \sigma \sigma^*(f,x^n) \| & \leq \varepsilon  + 2 \sup_{f \in F} \sup_{\|y-x^n\| \leq r} \|\sigma \sigma^*(f,y)\| \1_{\{\|X_s - x^n \| \, >\, \delta/2\}}.
\end{align*}
Next, we deal with the non-local part. For every \((f,z) \in F \times \bR^d\)  and \(s < u \wedge \tau^n_r\), using Lemma \ref{lem: non local 1}, we get that
\begin{align*}
\int  \big| \g(t^n&,x^n, k(f,X_s,z) - \g(t^n,x^n, k(f, x^n,z)) \big| \, \n(f, dz)
\\& \leq \Big( \varepsilon + 2 C \Big(\sup_{f\in F} \sup_{\|y - x^n\| \leq r} \int (\| k(f,y,z) \|^2 \wedge 1) \, \n(f, dz) \Big) \1_{\{\|X_s - x^n \| \,>\, \delta/2\}} \Big).
\end{align*}
Hence, 
\begin{equation} \label{eq: bound j_5}
\begin{split}
 J^n_5(u,P) \leq C u \Big( \varepsilon + \sup_{Q \in \cR(x^n)} Q\Big( \sup_{s \in [0,u]} \|X_s - x^n\| > \delta/2\Big)\Big).    
 \end{split}
\end{equation}
We conclude that 
\begin{align*}
	E^P \Big[\int_0^{u \wedge \tau^n_r} G^n (X_s) ds \Big] \leq u G (t^n, x^n, \phi) + C u \Big( \varepsilon + \sup_{Q \in \cR(x^n)} Q\Big( \sup_{s \in [0,u]} \|X_s - x^n\| > \delta/2\Big)\Big),
\end{align*}
and therefore, combining \eqref{eq: bound i^n 2} with this observation, we have
\begin{equation} \label{eq: final estimate i^n}
\begin{split}
I^n_u \leq - u  \partial_t \phi (&t^n, x^n) + u G (t^n, x^n, \phi) \\&+ C u\Big( \varepsilon + \sup_{n \in \mathbb{N}} \sup_{Q \in \cR(x^n)} Q\Big( \sup_{s \in [0,u]} \|X_s - x^n\| > \delta/2\Big)\Big). 
\end{split}
\end{equation}
Next, observe that 
\begin{equation*} 
\big| E^P \big[ \phi (t^n - (u \wedge \tau^n_r), X_{u \wedge \tau^n_r}) \big]  -   E^P \big[ \phi (t^n - u , X_{u}) \big] \big| \leq 2 \| \phi \|_\infty P \Big( \sup_{s \in [0,u]} \|X_s - x^n \| > r \Big).
\end{equation*}
Since \(P \in \cR(x^n) \) was arbitrary, we conclude, with Corollary~\ref{cor: maximal inequality}, that 
\begin{equation} \label{eq: estimate tau}
\begin{split}
\Big| \sup_{Q \in \cR(x^n)} & E^Q\big[ \phi (t^n - (u \wedge \tau^n_r), X_{u \wedge \tau^n_r}) \big] - \sup_{Q \in \cR(x^n)} E^Q\big[ \phi (t^n - u, X_u) \big]\Big| 
\\&\leq 2 \| \phi \|_\infty \sup_{Q \in \cR(x^n)} Q \Big( \sup_{s \in [0,u]} \|X_s - x^n \| > r \Big)
\\& \leq u C_r
\end{split}
\end{equation}
for some constant \(C_r > 0\) such that \(C_r \to 0\) as \(r \to \infty\).

Recalling \eqref{eq: main ev upper} and combining \eqref{eq: final estimate i^n} with \eqref{eq: estimate tau}, we obtain
\begin{equation} \begin{split}
	0 \leq  - u \partial_t & \phi (t^n, x^n) +  u G (t^n, x^n, \phi) + uC_r \phantom \int \\&+ C u\Big( \varepsilon + \sup_{n \in \mathbb{N}} \sup_{Q \in \cR(x^n)} Q\Big( \sup_{s \in [0,u]} \|X_s - x^n\| > \delta/2\Big)\Big) 
       \\& + \phi (t^n, x^n) - \phi (t^0, x^0) + v^* (t^0, x^0) - v (t^n, x^n).
\end{split}
\end{equation} 
Thanks to Lemma \ref{lem: G cont}, the map \((t, x) \mapsto G (x, \phi(t,\cdot\,))\) is continuous. 
Hence, taking the limit \(n \to \infty\), we obtain that 
\begin{align*}
	0 \leq   -u \partial_t & \phi (t^0, x^0) + u G (x^0, \phi(t^0,\cdot\,)) + uC_r \\&+ C u\Big( \varepsilon + \sup_{n \in \mathbb{N}} \sup_{Q \in \cR(x^n)} Q\Big( \sup_{s \in [0,u]} \|X_s - x^n\| > \delta/2\Big)\Big), 
\end{align*}
Dividing the right hand side by \(u\) and then letting \(u \searrow 0\),
we obtain, with Corollary \ref{cor: maximal inequality}, that 
\begin{align*}
	0 \leq -\partial_t \phi (t^0, x^0) + G (x^0, \phi(t^0,\cdot\,)) + C_r + C \varepsilon.
\end{align*}
Finally, taking first \(\varepsilon \to 0\) and then \(r \to \infty\), we conclude
\begin{align*}
	0 \leq -\partial_t \phi (t^0, x^0) + G (x^0, \phi(t^0,\cdot\,)).
\end{align*}
This proves that \(v^*\) is a viscosity subsolution to \eqref{eq: G eq}. 
\end{proof}

\subsection{Supersolution Property} \label{sec: pf supersolution}
In this section, we establish the supersolution property of~\(v_*\), i.e., part (ii) of Theorem~\ref{thm: main V}.
\begin{lemma} \label{lem: en sup}
Suppose that Condition~\ref{cond: V1} holds. Then, \(v_*\) is a viscosity supersolution to the nonlinear PDE \eqref{eq: G eq}.
\end{lemma}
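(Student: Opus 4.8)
The plan is to mirror the proof of the subsolution property (Lemma~\ref{lem: en sub}), making the natural sign reversals. First I would note that \(v_*(0, \cdot\,) = \psi\) by Lemma~\ref{lem: initial values}, which handles the initial condition. Then I would fix \(\phi \in C^\infty_b(\bR_+ \times \bR^d; \bR)\) with \(\phi \leq v_*\) and \(\phi(t^0, x^0) = v_*(t^0, x^0)\) for some \((t^0, x^0) \in (0, \infty) \times \bR^d\), and pick a sequence \((t^n, x^n) \to (t^0, x^0)\) with \(v(t^n, x^n) \to v_*(t^0, x^0)\). As in the subsolution case, set \(t_* := \inf_n t^n > 0\) (w.l.o.g.) and take \(0 < u < \tfrac{\delta}{2} \wedge t_*\).

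The key difference is that for the lower envelope we cannot freely replace \(v\) by \(v_*\) under a supremum in the advantageous direction, so instead of invoking the semigroup identity as an inequality chain toward \(\phi\) from above, I would use it toward \(\phi\) from below: for each \(n\), the semigroup property gives
\[
0 = \sup_{P \in \cR(x^n)} E^P\big[ v(t^n - u, X_u) \big] - v(t^n, x^n) \geq \sup_{P \in \cR(x^n)} E^P\big[ \phi(t^n - u, X_u) \big] - v(t^n, x^n),
\]
but this requires the crude bound \(v(t^n - u, y) \geq \phi(t^n - u, y) + \big(v_*(t^n-u,y) - v(t^n-u,y)\big)\) which is not clean. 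A cleaner route, and the one I expect the authors take, is to work with an auxiliary modification: fix \(\varepsilon>0\) and consider \(\phi^\varepsilon(t, x) := \phi(t,x) - \varepsilon(|x - x^0|^2 + (t - t^0)^2)\) (or localize via a cutoff), so that \(\phi^\varepsilon\) is a strict local minorant of \(v_*\) near \((t^0,x^0)\). Using the lower semicontinuity of \(v_*\) and that \(v\) approaches \(v_*\) along \((t^n,x^n)\), one deduces \(v(t^n - u, X_u) \geq \phi^\varepsilon(t^n - u, X_u)\) on the event \(\{\sup_{s \leq u}\|X_s - x^n\| \leq \rho\}\) for suitable \(\rho\), and the maximal inequality (Corollary~\ref{cor: maximal inequality}) controls the complementary event. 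Then the It\^o expansion of \(\phi^\varepsilon\) along \(X\) under any \(P \in \cR(x^n)\), exactly as in \eqref{eq: def j^n}, together with the bounds \eqref{eq: bound j_1}--\eqref{eq: bound j_5}, yields
\[
0 \geq -u\, \partial_t \phi(t^n, x^n) + u\, G(x^n, \phi(t^n, \cdot\,)) - uC_r - Cu\Big(\varepsilon + \sup_n \sup_{Q \in \cR(x^n)} Q\big(\textstyle\sup_{s \leq u}\|X_s - x^n\| > \delta/2\big)\Big) + \big(\phi(t^n,x^n) - \phi(t^0,x^0)\big) + \big(v_*(t^0,x^0) - v(t^n,x^n)\big),
\]
using here that \(G^n(x) \geq G(x^n, \phi(t^n, \cdot\,)) - (\text{error terms})\), the reverse of the estimate in Lemma~\ref{lem: en sub}.

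Finally I would take \(n \to \infty\) using continuity of \((t,x) \mapsto G(x, \phi(t,\cdot\,))\) (Lemma~\ref{lem: G cont}) and the convergences \(\phi(t^n,x^n) \to \phi(t^0,x^0)\), \(v(t^n,x^n) \to v_*(t^0,x^0)\), divide by \(u\), let \(u \searrow 0\) with Corollary~\ref{cor: maximal inequality} killing the probability term, then send \(\varepsilon \to 0\) and \(r \to \infty\) to arrive at
\[
0 \geq -\partial_t \phi(t^0, x^0) + G(x^0, \phi(t^0, \cdot\,)),
\]
i.e., \(\partial_t \phi(t^0,x^0) - G(x^0, \phi(t^0,\cdot\,)) \geq 0\), which is the supersolution inequality. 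The main obstacle is the direction of the approximation: unlike the subsolution case, where passing from \(v\) to \(v^*\) under the supremum was harmless, here one must carefully exploit lower semicontinuity of \(v_*\) (via the strict minorant \(\phi^\varepsilon\) and localization on the small-oscillation event) to legitimately bound \(v(t^n - u, X_u)\) below by the test function; everything else is a sign-flipped rerun of the It\^o/maximal-inequality bookkeeping from Lemma~\ref{lem: en sub}.
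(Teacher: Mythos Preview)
Your proposal has a genuine gap at the step where you ``reverse'' the inequality from the subsolution proof. In Lemma~\ref{lem: en sub}, for an arbitrary \(P \in \cR(x^n)\) the It\^o expansion produces an integrand with the \(P\)-characteristics \((b^P, a^P, \nu^P)\), and since these lie in \(\Theta(X)\) one can bound \emph{above} by the supremum \(G^n(X_s)\). For the supersolution direction you would need the opposite inequality, i.e., a \emph{lower} bound on the It\^o integrand in terms of \(G^n\) (or \(G\)). But a supremum gives only an upper bound; for a generic \(P \in \cR(x^n)\) the quantity \(\langle b^P_s, \nabla\phi(t^n,x^n)\rangle + \tfrac12\on{tr}[\nabla^2\phi(t^n,x^n) a^P_s] + \cdots\) can be strictly smaller than \(G^n(X_s)\), so the chain \(0 \geq I^n_u \geq -u\,\partial_t\phi + u\,G(x^n,\phi(t^n,\cdot\,)) - (\text{errors})\) does not follow.

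The paper's argument avoids this by \emph{fixing} an arbitrary \(f \in F\) and invoking Proposition~\ref{prop: existence} to obtain a specific measure \(P_f \in \cR(x^n)\) whose characteristics are exactly \((b(f,X), \sigma\sigma^*(f,X), \n(f,\cdot)\o k(f,X,\cdot)^{-1})\). Under \(P_f\) the It\^o expansion gives, up to the same \(J^n_k\)-errors, the operator \(G_f\) (no supremum), and for a fixed \(f\) the estimate \(G^n_f(x) \geq G_f(x^n,\phi(t^n,\cdot\,)) - (\text{errors})\) is legitimate. One then obtains \(0 \geq -\partial_t\phi(t^0,x^0) + G_f(x^0,\phi(t^0,\cdot\,))\) and takes the supremum over \(f \in F\) at the very end. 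This also explains why the paper needs the additional ingredients you omit: the lower semicontinuity of \(\tau^n_r\) together with Fatou's lemma to pass to the limit in \(E^{P_f}[u\wedge\tau^n_r]\), and the observation \(E^{P_f}[u\wedge\tau^0_r]/u \to 1\) as \(u\searrow 0\).

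A smaller point: your detour through a perturbed test function \(\phi^\varepsilon\) is unnecessary. Since \(v_* \leq v\) by definition of the lower envelope and \(\phi \leq v_*\) by assumption, the pointwise inequality \(\phi(t^n-u,X_u) \leq v(t^n-u,X_u)\) is immediate, and the semigroup identity yields \(0 \geq \sup_{P}E^P[\phi(t^n-u,X_u)] - \phi(t^n,x^n) + (\phi(t^n,x^n)-\phi(t^0,x^0)) + (v_*(t^0,x^0)-v(t^n,x^n))\) directly.
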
 
\begin{proof}
First, observe that \(v_*(0,x) = \psi(x)\) by Lemma \ref{lem: initial values}.
Let \(\phi \in  C^\infty_b(\bR_+ \times \bR^d; \bR)\) such that \(\phi \leq v_*\) and \(\phi (t^0, x^0) = v_*(t^0, x^0)\) for some \((t^0, x^0) \in (0, \infty) \times \bR^d\). 
There exists a sequence \((t^n, x^n)_{n = 1}^\infty \subset \bR_+ \times\bR^d\) such that \((t^n, x^n) \to (t^0, x^0)\) and 
\[
v_* (t^0, x^0) = \lim_{n \to \infty} v (t^n, x^n). 
\]
We take an arbitrary \(0 < u < t_* := \inf_{n \in \mathbb{Z}_+} t^n\), where, w.l.o.g., \(t_* > 0\).
Similar to \eqref{eq: main ev upper}, we obtain that 
\begin{equation} \label{eq: main ev lower}
	\begin{split}
		0\geq \sup_{P \in \cR (x^n)} &E^P \big[ \phi (t^n - u, X_u) \big] - \phi (t^n, x^n) \\&+ \phi (t^n, x^n) - \phi (t^0, x^0) + v_* (t^0, x^0) - v (t^n, x^n).
	\end{split}
\end{equation}
Fix \(f \in F\) and \(n \in \bN\). 
Thanks to Proposition \ref{prop: existence}, there exists a measure \(P = P_f \in \cR(x^n)\) such that the Lebesgue densities of the \(P\)-characteristics of \(X\) coincide \((\llambda \otimes P)\)-a.e. with 
\begin{equation} \label{eq: P_f}
  (b^P, a^P, \nu^P) = \big(b (f, X), \sigma \sigma^* (f, X), \n(f, \, \cdot \,) \o k (f, X, \, \cdot \,)^{-1} \big).
\end{equation}
For \(r > 0\) and \(n \in \mathbb{Z}_+\), we define the stopping times 
\begin{align*}
\tau_r &:= \inf \Big\{t \geq 0 \colon \sup_{u \in [0, t]} \|X_u \| \geq r \Big\}, \\
\tau^n_r &:= \inf \Big\{t \geq 0 \colon \sup_{u \in [0, t]} \|X_u - x^n \| \geq r \Big\} = \tau_r (X - x^n).
\end{align*}
Notice that 
\[
\{\tau_r \leq t \} = \Big\{ \sup_{u \in [0, t]} \| X_u \| \geq r \Big\}.
\]
The map \(\omega \mapsto \sup_{u \in [0, s]} \| \omega (u) \|\) is upper semicontinuous in the Skorokhod \(J_1\) topology (see \cite[Theorem~15.20]{HWY}). Hence, the set \(\{\tau_r \leq t\}\) is closed and therefore, \(\omega \mapsto \tau_r (\omega)\) is lower semicontinuous, which yields that 
\begin{align} \label{eq: ST upper semi}
\liminf_{n \to \infty} \tau^n_r \geq \tau^0_r.
\end{align}
It\^o's formula implies that \(P\)-a.s.
\begin{align*}
\phi (t^n - (u \wedge \tau^n_r), X_{u \wedge \tau^n_r}) & - \phi (t^n, x^n)  = - \int_0^{u \wedge \tau^n_r} \partial_t \phi (t^n - s, X_s) ds
\\&+ \int_0^{u \wedge \tau^n_r} \Big[ \langle b^P_s,  \nabla \phi (t^n - s, X_s) \rangle + \tfrac{1}{2} \on{tr} \big[ \nabla^2 \phi (t^n - s, X_s) a^P_s \big]   \Big] ds
\\&+ \int_0^{u \wedge \tau^n_r} \int  \g(t^n-s, X_{s-}, z) \mu^X(ds,dz)
\\&+ \text{local \(P\)-martingale}, \phantom \int
\end{align*}
where the function \(\g\) is defined in \eqref{eq: def g}.
Thanks to the local boundedness Condition \ref{cond: V1} and the definition of \(\tau^n_r\), 
it follows that the local \(P\)-martingale part is actually a true \(P\)-martingale.
Hence, for all \(n \in \mathbb{N}\), we obtain that 
\begin{equation*} \begin{split}
E^P & \big[  \phi (t^n - u \wedge \tau^n_r, X_{u \wedge \tau^n_r}) - \phi (t^n, x^n) \big] 
\\&= E^P \Big[ \int_0^{u \wedge \tau^n_r} \big( - \partial_t \phi (t^n - s, X_s) + \langle b^P_s, \nabla \phi (t^n - s, X_s) \rangle + \tfrac{1}{2} \on{tr} [ \nabla^2 \phi (t^n - s, X_s) a^P_s] \big) ds \Big]
\\&\qquad \qquad + E^P \Big[ \int_0^{u \wedge \tau^n_r} \int \g(t^n - s, X_{s}, z) \nu^P(ds,dz) \Big]
\\&=: I^n_u. \phantom \int
\end{split}
\end{equation*}
We introduce, for \((t,x) \in \bR_+ \times \bR^d\), the operators
\begin{align*}
G_f(x, \phi(t,\cdot\,)) := \langle  \nabla \phi (t,x), b (f, x) \rangle
    &+ \tfrac{1}{2} \on{tr} \big[ \nabla^2\phi (t, x) \sigma \sigma^* (f, x) \big]
     \\&+ \int\, \g(t,x,k(f,x,z)) \, \n (f, dz)
\end{align*}
and
\begin{align*}
  G^n_f (x) :=  \langle \nabla  \phi (t^n, x^n), b (f, x) \rangle & + \tfrac{1}{2} \on{tr} \big[ \nabla^2 \phi (t^n, x^n) \sigma \sigma^*(f,x) \big] 
   \\&+ \int \g (t^n, x^n, k (f, x, z)) \, \n (f, dz) 
\end{align*}
Notice that 
\begin{equation} \label{eq: estimate G_f G^n_f}
\begin{split}
   G_f(x^n,\phi(t^n,\cdot\,)) \leq G^n_f (x) &+  C \Big( \| b (f, x)- b(f,x^n) \| + \tfrac{1}{2} \|\sigma \sigma^* (f, x) - \sigma \sigma^*(f,x^n) \| 
 \\& + \int \big| \g(t^n,x^n, k(f,x,z)) - \g(t^n,x^n, k(f, x^n,z)) \big| \, \n(f, dz) \Big),
\end{split}
\end{equation}
for some constant \(C>0\) that is independent of \(n\in\bN\).
Furthermore, recalling the definition of the quantities \(J^n_k(u,P)\) for \(k=2,3,4\) from \eqref{eq: def j^n}, we deduce from \eqref{eq: P_f} that
\begin{equation} \label{eq: estimate G^n_f}
\begin{split}
   E^P \Big[\int_0^{u \wedge \tau^n_r} G^n_f(X_s) ds \Big] &\leq  E^P \Big[ \int_0^{u \wedge \tau^n_r} \langle b^P_s, \nabla \phi (t^n + s, X_s) \rangle + \tfrac{1}{2} \on{tr} \big [ \nabla^2 \phi (t^n + s, X_s) a^P_s \big]  ds
   \Big]
    \\&\qquad\qquad + E^P \Big[ \int_0^{u \wedge \tau^n_r} \int \g(t^n+s, X_{s}, z) \nu^P(ds,dz) \Big]
     \\&\qquad\qquad + J^n_2(u,P) + \tfrac{1}{2} J^n_3(u,P) + J^n_4(u,P). \phantom \int
\end{split}
\end{equation}
Moreover,
\begin{equation} \label{eq: estimate partial_t}
\begin{split}
- E^P \big[u \wedge \tau^n_r\big] \partial_t \phi(t^n,x^n) &= -E^P\Big[\int_0^{u \wedge \tau^n_r} \partial_t\phi(t^n,x^n) ds \Big] 
\\&\leq - E^P\Big[ \int_0^{u \wedge \tau^n_r} \partial_t \phi(t^n - s,X_s) ds \Big] + J^n_1(u,P),
\end{split}
\end{equation}
where \(J^n_1(u,P)\) is defined in \eqref{eq: def j^n}.
Combining \eqref{eq: estimate G_f G^n_f} -- \eqref{eq: estimate partial_t}, we conclude with the help of \eqref{eq: bound j_1} -- \eqref{eq: bound j_4} and \eqref{eq: bound j_5}, where \(\varepsilon\) and \(\delta\) are as in the proof of Lemma~\ref{lem: en sub}, that
\begin{equation*}
    \begin{split}
       I^n_u & \geq  E^P \big[u \wedge \tau^n_r\big]  \big( - \partial_t \phi(t^n,x^n) + G_f(x^n,\phi(t^n,\cdot\, ))  \big)
       \\& \hspace{3cm}- \big(J^n_1(u,P) + J^n_2(u,P) + J^n_3(u,P) + J^n_4(u,P) + J^n_5(u,P) \big)
       \\& \geq E^P \big[u \wedge \tau^n_r \big] \big( - \partial_t \phi(t^n,x^n) + G_f(x^n,\phi(t^n,\cdot\,)) \big) \\&\hspace{3cm}- C u\Big( \varepsilon + \sup_{n \in \mathbb{N}} \sup_{Q \in \cR(x^n)} Q\Big( \sup_{s \in [0,u]} \|X_s - x^n\| > \delta/2\Big)\Big).
    \end{split}
\end{equation*}
Recalling \eqref{eq: main ev lower} and \eqref{eq: estimate tau}, we obtain that 
\begin{align*}
	0 &\geq   E^P \big [u \wedge \tau^n_r \big] \big( - \partial_t \phi (t^n, x^n) + G_f(x^n, \phi(t^n,\cdot\,)) \big) \\&\hspace{3cm}- C_r u - C u\Big( \varepsilon + \sup_{n \in \mathbb{N}} \sup_{Q \in \cR(x^n)} Q\Big( \sup_{s \in [0,u]} \|X_s - x^n\| > \delta/2\Big)\Big) 
       \\&\hspace{3cm} + \phi (t^n, x^n) - \phi (t^0, x^0) + v^* (t^0, x^0) - v (t^n, x^n), 
\end{align*}
for some constant \(C_r > 0\) such that \(C_r \to 0\) as \(r \to \infty\).
Observe that the map \((t, x) \mapsto G_f (x, \phi(t,\cdot\,))\) is continuous (cf. Lemma \ref{lem: G cont}). 
Recalling \eqref{eq: ST upper semi}, we obtain, by taking the limes inferior and Fatou's lemma, that
\begin{equation} \label{eq: pf super}
    \begin{split}
        0  & \geq \liminf_{n \to \infty} E^P \big [u \wedge \tau^n_r \big]  \big( - \partial_t \phi (t^0, x^0) +  G_f(x^0, \phi(t^0,\cdot\,)) \big) 
\\&\hspace{3cm} -C_r u - C u\Big( \varepsilon + \sup_{n \in \mathbb{N}} \sup_{Q \in \cR(x^n)} Q\Big( \sup_{s \in [0,u]} \|X_s - x^n\| > \delta/2\Big)\Big)
\\& \geq E^P \big [u \wedge \tau^0_r \big]  \big( - \partial_t \phi (t^0, x^0) +  G_f(x^0, \phi(t^0,\cdot\,)) \big) 
\\&\hspace{3cm} -C_r u - C u\Big( \varepsilon + \sup_{n \in \mathbb{N}} \sup_{Q \in \cR(x^n)} Q\Big( \sup_{s \in [0,u]} \|X_s - x^n\| > \delta/2\Big)\Big).
    \end{split}
\end{equation}
Since \(P( \tau^0_r = 0 ) = P (\|X_0 - x^0\| \geq r) = 0\), using the dominated convergence theorem, we get that 
\begin{align*}
 \lim_{u \searrow 0} \frac{E^P \big[ u \wedge \tau^0_r \big]}{u} = 1.
\end{align*}
Thus, dividing the last term in \eqref{eq: pf super} by \(u\) and then letting \(u \searrow 0\),
we obtain with Corollary~\ref{cor: maximal inequality} that 
\[
	0 \geq - \partial_t \phi (t^0, x^0) + G_f(x^0, \phi(t^0,\cdot\,))  - C_r  - C \varepsilon.
\]
Taking the limit \(\varepsilon \to 0\) and then \(r \to \infty\), we conclude
\begin{align*}
	0 \geq - \partial_t \phi (t^0, x^0) + G_f(x^0, \phi(t^0,\cdot\,)).
\end{align*}
Finally, taking the sup over all \(f \in F\) proves that \(v_*\) is a viscosity supersolution to \eqref{eq: G eq}. 
\end{proof}

\subsection{Proof of Theorem \ref{thm: main V}} \label{sec: pf thm viscosity}
Part (i) of Theorem~\ref{thm: main V} follows from Lemma \ref{lem: en sub} and part (ii) follows from Lemma \ref{lem: en sup}. Taking (i) and (ii) into consideration, part (iii) is a consequence of the comparison principle that is given by Theorem \ref{theo: comparison}.\qed

\appendix

\section{A Comparison Result for L\'evy-type HJB Equations} \label{app: comparison}
The purpose of this appendix is to tailor the abstract comparison result given by \cite[Corollary~2.23]{hol16} to our framework. 

\begin{condition} \label{cond: local bdd}
	The functions \(\bb\) and \(\bs\) satisfy the following boundedness condition:
	\[
	\forall x \in \bR^d \colon \quad \sup \Big\{ \|\bb (f, x)\| + \|\bs(f, x)\| \colon f \in F \Big\} < \infty.
	\]
 	Moreover, there is a constant \(C > 0\) and a Borel function \(\gamma \colon L \to [0, \infty]\) such that 
  \[\sup_{f \in F} \int (\gamma^2(z) \wedge 1)\, \n (f, dz) < \infty\] 
  and 
\begin{align*}
    \lim_{\kappa \searrow 0} \, \sup_{f \in F} \int_{\{\gamma\leq \kappa \}} \gamma^2 (z)\, \n (f, dz) &= 0, \quad
    \lim_{R \to \infty} \, \sup_{f \in F} \n \, (f, \{\gamma > R\}) = 0.
\end{align*}
  as well as
	\begin{align*}
		 \| \bs(f,x)-\bs(f,y) \| &\leq C  \|x -y \|,  \\
		\| \bk (f, x, z) - \bk (f, y, z)\| &\leq  \gamma(z)  \|x - y\|, \\
		\| \bk (f, x, z)\| & \leq  \gamma(z) (1 + \|x\|)
	\end{align*}
	for all \(f \in F\), \(x,y \in \bR^d\) and \(z \in L\).
 Define \(\ob \colon F \times \bR^d \to \bR^d\) by    
 \begin{align*}
    \ob (f, x) := b (f, x) &+ \int_{\{\gamma \leq 1\}} \big( k (f, x, z) - h (k (f, x, z)) \big)\, \n (f, dz) \\&- \int_{\{\gamma > 1\}} h (k (f, x, z)) \, \n (f, dz).
    \end{align*}
    As explained in Appendix~\ref{app: well-defined} below, the function \(\ob\) is well-defined under the above conditions. Finally, there exists a constant \(C > 0\) such that
    \begin{align*}
        \| \ob (f, x) - \ob (f, y) \| \leq C \|x - y\|
    \end{align*}
    for all \(f \in F\) and \(x, y \in \bR^d\).
\end{condition}

\begin{theorem} \label{theo: comparison}
Suppose that Condition~\ref{cond: local bdd} holds, that \(u \colon \bR_+ \times \bR^d \to \bR\) is a bounded viscosity subsolution and that \(v \colon \bR_+ \times \bR^d \to \bR\) is a bounded viscosity supersolution to the nonlinear PDE \eqref{eq: G eq}. Then, \(u \leq v\).
\end{theorem}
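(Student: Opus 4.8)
The plan is to rewrite the Cauchy problem \eqref{eq: G eq} into the \emph{normal form} used in the abstract comparison theory of \cite{hol16} and then to invoke \cite[Corollary~2.23]{hol16} directly. The role of the reformulation is precisely to absorb the discrepancy between the truncation function $h$ and the $\gamma$-based truncation $\1_{\{\gamma \leq 1\}}$ into the modified drift $\ob$ of Condition~\ref{cond: local bdd}.

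First I would rewrite the operator $G$ from \eqref{eq: G}. Splitting the non-local integral at the level set $\{\gamma \leq 1\}$, replacing $h(\bk(f,x,z))$ by $\bk(f,x,z)$ in the integrand over $\{\gamma\leq1\}$ and dropping the first-order correction altogether over $\{\gamma>1\}$, and collecting the resulting $\langle\nabla\phi(x),\cdot\,\rangle$ remainder terms, one obtains
\begin{align*}
G(x,\phi) = \sup_{f \in F}\Big\{ \langle\nabla\phi(x), \ob(f,x)\rangle &+ \tfrac{1}{2} \tr\big[\nabla^2\phi(x)\,\sigma\sigma^*(f,x)\big] \\
&+ \int_{\{\gamma \leq 1\}}\big[\phi(x+\bk(f,x,z)) - \phi(x) - \langle\nabla\phi(x), \bk(f,x,z)\rangle\big]\,\n(f,dz) \\
&+ \int_{\{\gamma > 1\}}\big[\phi(x+\bk(f,x,z)) - \phi(x)\big]\,\n(f,dz)\Big\},
\end{align*}
with $\ob$ exactly the modified drift of Condition~\ref{cond: local bdd}, whose well-definedness is recorded in Appendix~\ref{app: well-defined}. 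All integrals converge by the bounds $\|\bk(f,x,z)\| \leq \gamma(z)(1+\|x\|)$ and $\sup_{f\in F}\int(\gamma^2(z)\wedge1)\,\n(f,dz)<\infty$: the small-jump integrand is of order $\gamma(z)^2$ near $\{\gamma=0\}$, and the large-jump integrand is bounded on $\{\gamma>1\}$, a set of finite $\n(f,\cdot\,)$-mass. This is the generator form assumed in \cite{hol16}, with control set $F$ and coefficient triple $(\ob(f,\cdot\,),\sigma(f,\cdot\,),\bk(f,\cdot\,,\cdot\,))$ relative to the reference measures $\{\n(f,dz)\colon f\in F\}$.

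Next I would verify that Condition~\ref{cond: local bdd} supplies exactly the structural hypotheses required by \cite[Corollary~2.23]{hol16}: for each fixed $x$, finiteness of $\sup_{f\in F}\big(\|\ob(f,x)\|+\|\sigma(f,x)\|+\int_{\{\gamma\leq1\}}\|\bk(f,x,z)\|^2\,\n(f,dz)+\n(f,\{\gamma>1\})\big)$; the global Lipschitz bounds on $\ob$ and $\sigma$, uniform in $f$; the $\gamma$-domination $\|\bk(f,x,z)-\bk(f,y,z)\|\leq\gamma(z)\|x-y\|$ together with the linear growth $\|\bk(f,x,z)\|\leq\gamma(z)(1+\|x\|)$; and the tightness/uniform-integrability conditions $\lim_{\kappa\searrow0}\sup_{f\in F}\int_{\{\gamma\leq\kappa\}}\gamma^2\,\n(f,dz)=0$ and $\lim_{R\to\infty}\sup_{f\in F}\n(f,\{\gamma>R\})=0$. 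I would also reconcile the notion of viscosity sub/supersolution used here --- smooth bounded test functions touching $u$ from above (resp.\ below), with the whole non-local operator evaluated at $u$ --- with that of \cite{hol16}: if a bounded upper semicontinuous $u$ is touched from above by some $\phi\in C^\infty_b$ at $(t,x)$, then the small-jump integrand in $G(x,u(t,\cdot\,))$ is dominated above by $\phi(x+\bk)-\phi(x)-\langle\nabla\phi(x),\bk\rangle=O(\gamma(z)^2)$ and the large-jump integrand by $2\|u\|_\infty$, so $G(x,u(t,\cdot\,))$ is a well-defined element of $[-\infty,\infty)$; the supersolution case is symmetric, with values in $(-\infty,\infty]$. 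With these identifications, \cite[Corollary~2.23]{hol16} applies and yields $u\leq v$.

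The remaining work --- the doubling-of-variables argument together with the non-local Jensen--Ishii lemma, and the closing of the estimates via the Lipschitz and $\gamma$-domination bounds --- is inherited wholesale from \cite{hol16}. The only genuine point that needs care here is the first step: making the normal-form rewriting precise enough that the drift produced by re-linearizing the small jumps coincides with $\ob$, and that the conventions on the truncation and on which jumps carry a first-order correction match those of \cite{hol16}. Once this bookkeeping is settled, there is nothing further to prove.
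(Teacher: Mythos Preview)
Your proposal is correct and follows essentially the same route as the paper: rewrite the non-local operator by splitting the jump integral along the level sets of \(\gamma\), absorb the resulting first-order correction into the modified drift \(\ob\), and then invoke \cite[Corollary~2.23]{hol16} for the HJB form so obtained. The paper is slightly more explicit that the match with \cite[Definition~2.27]{hol16} is not literal---\(\n(f,dz)\) lives on \(\mathcal{B}(L)\) rather than \(\mathcal{B}(\bR^d)\) and the decomposition is in \(\gamma\) rather than in \(|z|\)---so that the verification of the prerequisites of \cite[Corollary~2.23]{hol16} proceeds via \cite[Lemma~2.32, Proposition~2.33]{hol16} with \(|z|\) replaced by \(\gamma(z)\); this is the ``bookkeeping'' you allude to in your final paragraph.
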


\begin{proof}
	Using arguments as in \cite[Lemma~2.32, Proposition~2.33]{hol16}, the result can be deduced from \cite[Corollary~2.23]{hol16}. For brevity, we provide not all details. Instead, we sketch the idea that relates our setting to the HJB framework from \cite[Definition~2.27]{hol16}. Let \(\gamma \colon L \to [0,\infty]\) be as in Condition~\ref{cond: local bdd}. For \(\phi \in C^2_b (\bR^d; \bR)\), set 
	\begin{align*}
		J^f (x) := \int\, \big[ \phi(x + \bk (f, x, z)) - \phi(x) - \langle \nabla \phi(x) , h (\bk (f, x, z)) \rangle\big]\, \n (f, dz).
	\end{align*}
	Taking some \(\kappa \in (0, 1)\), we split the operator \(J^f\) as follows: 
	\begin{align*}
		J^f (x) &= H^f (x) + \langle \nabla \phi (x), K^f (x) \rangle, \phantom \int 
	\end{align*}
with 
\begin{align*}
H^f (x) &:= \int_{\{\gamma \leq \kappa\}} \big[ \phi(x + \bk (f, x, z)) - \phi(x) - \langle \nabla \phi(x) , \bk (f, x, z) \rangle\big]\, \n (f, dz) 
		\\&\qquad \qquad + \int_{\{ \kappa < \gamma \leq 1\}} \big[ \phi(x + \bk (f, x, z)) - \phi(x) - \langle \nabla \phi(x) , \bk (f, x, z) \rangle\big]\, \n (f, dz)
		\\& \qquad \qquad + \int_{\{ \gamma > 1\}} \big[ \phi(x + \bk (f, x, z)) - \phi(x) \big]\, \n (f, dz),
  \\
K^f (x) &:=  \int_{\{\gamma \leq 1\}} \big(\bk (f, x, z) - h (\bk (f, x, z)) \big) \, \n (f, dz) - \int_{\{\gamma > 1\}}  h (\bk (f, x, z)) \, \n (f, dz).
\end{align*}
It is shown in Appendix~\ref{app: well-defined} that \(K^f\) is well-defined under Condition~\ref{cond: local bdd}.
Returning to the decomposition \(J^f = H^f + \langle \nabla \phi, K^f\rangle\), we notice that \(H^f\) is of a similar form as the HJB operator from \cite[Definition~2.27]{hol16} with the exception that \(\n (f, dz)\) is not defined on \(\mathcal{B}(\bR^d)\) but on \(\mathcal{B}(L)\), and the decomposition is not done with the sets \(\{|z| \leq \kappa\}, \{\kappa < |z| \leq 1\}\) and \(\{|z| > 1\}\) but with \(\{\gamma \leq \kappa\}, \{\kappa < \gamma \leq 1\}\) and \(\{\gamma > 1\}\). A careful inspection of the proofs for \cite[Lemma~2.32, Proposition~2.33]{hol16} shows that the argument for the validity of the prerequisites of \cite[Corollary~2.23]{hol16} only needs cosmetic modifications (namely, \(|z|\) in the estimates from \cite{hol16} has to be replaced by \(\gamma (z)\) in our setting). 
The term \(K^f\) can be added to the drift coefficient \(\bb\), which leads to the modified coefficient \(\ob\). As Condition~\ref{cond: local bdd} contains a Lipschitz assumption for \(\ob\), the modified drift coefficient can be treated as in \cite{hol16}.
Omitting full details of the argument sketched above for brevity, we conclude the claim of Theorem~\ref{theo: comparison} from \cite[Corollary~2.23]{hol16}.
\end{proof}

\section{On the modified drift coefficient} \label{app: well-defined}
Take a compact set \(K \subset \bR^d\) and suppose that there exists a Borel function \(\gamma = \gamma\, (K) \colon L \to \bR_+\), such that 
\begin{align*}
    \sup_{f \in F} \int (\gamma^2 (z) \wedge 1) \, \n (f, dz) < \infty, 
\end{align*}
and 
\begin{align*}
    \lim_{\kappa \searrow 0} \sup_{f \in F} \int_{\{\gamma \leq \kappa\}} \gamma^2(z) \, \n(f,dz) = 0, \quad \lim_{R \to \infty} \sup_{f \in F} \n \, (f, \{\gamma > R\}) = 0.
\end{align*}
Further, assume that there exists a modulus of continuity \(\ka = \ka\, (K) \colon [0, \infty] \to [0, \infty]\) such that 
\begin{align} \label{eq: conti assp}
          \|k (f, x, z) - k (f, y, z)\| &\leq \gamma (z)\, \ka\, (\|x - y\|),
          \quad \|k (f, x, z) \| \leq \gamma (z)
\end{align}
for all \(f \in F\), \(x, y \in K\) and \(z \in L\).
Under these assumptions, we have the following:
      \begin{lemma} \label{lem: lemma cont modi drift}
     For all \(f \in F\) and \(x \in K\), 
     \begin{align*}
     \int_{\{\gamma \leq 1\}} \| k (f&, x, z) - h (k (f, x, z)) \| \, \n (f, dz) \\&+ \int_{\{\gamma > 1\}} \| h (f, x, z) \| \, \n (f, dz) \leq C \, \int (\gamma^2 (z) \wedge 1 ) \, \n (f, dz).
     \end{align*}
     Furthermore, there exists a modulus of continuity \(\ka^* = \ka^* (K) \colon [0, \infty] \to [0, \infty]\) such that 
     \begin{align*}
         \int_{\{\gamma \leq 1\}} \big\| k (f, x, z) - h (k (f, x, z)) - ( k (f, y, z) - h (k (f, y, z)))\big\| \, \n (f, dz) &\leq \ka^* (\|x - y\|), \\ 
         \int_{\{\gamma > 1\}} \big\| h (k (f, x, z)) - h (k (f, y, z))\big\| \, \n (f, dz) &\leq \ka^* (\|x - y\|)
     \end{align*}
     for all \(f \in F\) and \(x, y \in K\).
      \end{lemma}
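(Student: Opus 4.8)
The plan is to reduce the lemma to three elementary features of the fixed truncation function \(h\): it is bounded, it is \(L_h\)-Lipschitz with \(h(0) = 0\) (so \(\|h(w)\| \le L_h \|w\|\)), and there is some \(\kappa_0 \in (0,1)\) with \(h(w) = w\) whenever \(\|w\| \le \kappa_0\). I will also use the uniform constants \(M := \sup_{f \in F} \int (\gamma^2(z) \wedge 1)\,\n(f,dz) < \infty\) and \(\rho(R) := \sup_{f \in F} \n(f, \{\gamma > R\})\), noting that \(\rho(R) \to 0\) as \(R \to \infty\) and that \(\sup_{f \in F} \n(f, \{\gamma > 1\}) \le M\) (since \(\gamma^2 \wedge 1 = 1\) on \(\{\gamma > 1\}\)), together with the growth bound \(\|k(f,x,z)\| \le \gamma(z)\) from \eqref{eq: conti assp}.

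For the first display, split \(\{\gamma \le 1\}\) into \(\{\gamma \le \kappa_0\}\) and \(\{\kappa_0 < \gamma \le 1\}\). On \(\{\gamma \le \kappa_0\}\) one has \(\|k(f,x,z)\| \le \gamma(z) \le \kappa_0\), hence \(k(f,x,z) - h(k(f,x,z)) = 0\) and this part vanishes. On \(\{\kappa_0 < \gamma \le 1\}\), the estimate \(\|k(f,x,z) - h(k(f,x,z))\| \le (1 + L_h)\|k(f,x,z)\| \le (1+L_h)\gamma(z) \le \tfrac{1+L_h}{\kappa_0}\,(\gamma^2(z) \wedge 1)\) does the job, using \(\gamma(z) \le \gamma^2(z)/\kappa_0\) there. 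On \(\{\gamma > 1\}\), \(\|h(k(f,x,z))\| \le \|h\|_\infty = \|h\|_\infty\,(\gamma^2(z) \wedge 1)\). Adding these bounds yields the first claim with \(C := \max\{(1+L_h)/\kappa_0,\ \|h\|_\infty\}\).

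For the second part, write \(\phi_h(w) := w - h(w)\), which is \((1+L_h)\)-Lipschitz and vanishes on \(\{\|w\| \le \kappa_0\}\). As before, on \(\{\gamma \le \kappa_0\}\) both \(\phi_h(k(f,x,z))\) and \(\phi_h(k(f,y,z))\) vanish, while on \(\{\kappa_0 < \gamma \le 1\}\) the integrand is at most \((1+L_h)\|k(f,x,z) - k(f,y,z)\| \le (1+L_h)\gamma(z)\ka(\|x-y\|)\); since \(\int_{\{\kappa_0 < \gamma \le 1\}}\gamma(z)\,\n(f,dz) \le M/\kappa_0\), the first integral of the second display is bounded by \(\tfrac{(1+L_h)M}{\kappa_0}\ka(\|x-y\|)\), uniformly in \(f\). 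For the second integral the naive Lipschitz bound \(L_h\gamma(z)\ka(\|x-y\|)\) is not \(\n(f,\cdot)\)-integrable over \(\{\gamma > 1\}\) (only \(\n(f, \{\gamma > 1\}) \le M\) is available), so I interpolate: for \(R > 1\) split \(\{\gamma > 1\} = \{1 < \gamma \le R\} \cup \{\gamma > R\}\) and use \(\|h(k(f,x,z)) - h(k(f,y,z))\| \le L_h \gamma(z) \ka(\|x-y\|) \le L_h R\,\ka(\|x-y\|)\) on the first piece and \(\|h(k(f,x,z)) - h(k(f,y,z))\| \le 2\|h\|_\infty\) on the second, obtaining the uniform bound \(L_h R M\,\ka(\|x-y\|) + 2\|h\|_\infty \rho(R)\), valid for every \(R > 1\). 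The function defined by
\[
\ka^*(s) := \tfrac{(1+L_h)M}{\kappa_0}\,\ka(s) \;\vee\; \inf_{R > 1}\big( L_h R M\,\ka(s) + 2\|h\|_\infty \rho(R)\big)
\]
then dominates both right-hand sides, and it remains to check that \(\ka^*\) is a modulus of continuity: it is increasing, \(\ka^*(0) = 0\), and \(\limsup_{s \searrow 0}\ka^*(s) \le \inf_{R > 1} 2\|h\|_\infty \rho(R) = 0\) because \(\ka(s) \to 0\) as \(s \searrow 0\) and \(\rho(R) \to 0\) as \(R \to \infty\).

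I expect the only nonroutine step to be this last interpolation for the big-jump contribution of \(h \circ k\): one must trade the Lipschitz-in-space estimate, which is not integrable on \(\{\gamma > 1\}\), against the supremum norm of \(h\) and the uniform tightness \(\rho(R) \to 0\), and then repackage the resulting two-parameter bound into a single modulus of continuity by taking an infimum over the truncation level \(R\). Everything else is bookkeeping with the behavior of \(h\) near the origin.
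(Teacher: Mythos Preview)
Your proof is correct and follows essentially the same strategy as the paper: split the integration domain by the size of \(\gamma\), use that \(h\) is the identity near zero, Lipschitz globally, and bounded, and handle the big-jump region \(\{\gamma > 1\}\) by interpolating between the Lipschitz bound on \(\{1 < \gamma \le R\}\) and the uniform tightness \(\rho(R) \to 0\) on \(\{\gamma > R\}\).

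One small difference is worth recording. For the first equi-continuity integral over \(\{\gamma \le 1\}\), the paper splits at a variable level \(\kappa = \kappa(\varepsilon)\) chosen via the small-jump tightness hypothesis \(\sup_f \int_{\{\gamma \le \kappa\}} \gamma^2 \, \n(f,dz) \to 0\), bounding the contribution of \(\{\gamma \le \kappa\}\) by \(C\varepsilon\). You instead split at the fixed level \(\kappa_0\) where \(h\) agrees with the identity, so that the integrand is \emph{identically zero} on \(\{\gamma \le \kappa_0\}\); this is cleaner and, notably, does not use the small-jump tightness assumption at all for that term. Your explicit construction \(\ka^*(s) = \inf_{R>1}(\dots)\) is also a tidy way to package the two-parameter bound; the paper argues equi-continuity in \(\varepsilon\)-\(\delta\) language instead.
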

      \begin{proof}
Regarding the first claim, observe that, for \(z \in \{\gamma \leq 1\}\) and all \(f \in F, \, x \in K\),
\begin{equation} \label{eq: first bound modi chara}
\begin{split}
\| \bk (f, x, z) - h (\bk (f, x, z))\| &\leq C \|\bk (f, x, z)\| \1_{\{\|\bk (f, x, z)\|\, >\, \epsilon\}} \\&\leq C \|\bk (f, x, z)\|^2 
\\&\leq C ( \gamma^2 (z) \wedge 1),
\end{split}
\end{equation}
where \(\epsilon > 0\) is such that \(h (y) = y\) for \(\|y\| \leq \epsilon\). Similarly, for the second term, we record that
\[
\|h (\bk(f,x,z))\| \1_{\{\gamma(z)\, >\, 1\}} \leq C\, ( \gamma^2(z) \wedge 1 ), \quad (f,x,z) \in F \times \bR^d \times L,
\]
where the constant \(C > 0\) bounds the truncation function \(h\). 

\smallskip
Next, we discuss the equi-continuity assertions. First, fix \(\varepsilon \in (0, 1)\) and \(x, y \in K\). Take \(\kappa \in (0, 1)\) such that 
\[
\sup_{f \in F} \int_{\{\gamma \leq \kappa\}} \gamma^2(z) \, \n (f, dz)  \leq \varepsilon.  
\]
Then, using \eqref{eq: first bound modi chara}, we get that 
\begin{align*}
    \int_{\{\gamma \leq \kappa\}} \big\| k (f, x, z) - h (k (f, x, z)) - ( k (f, y, z) - h (k (f, y, z)))\big\| \, \n (f, dz) \leq C \, \varepsilon. 
\end{align*}
For the remainder, using that \(h\) is Lipschitz continuous and \eqref{eq: conti assp}, we obtain that 
\begin{align*}
     \int_{\{\kappa < \gamma \leq 1\}} \big\| k (f, x, z) - h (k (f, x, z)) - ( k (f, y, z) & - h (k (f, y, z)))\big\| \, \n (f, dz) 
     \\&\leq \ka (\|x - y\|)\, \frac{C}{\kappa}\, \int ( \gamma^2 (z) \wedge 1) \, \n (f, dz). 
\end{align*}
This proves equi-continuity of 
\[
\Big\{ K \ni x \mapsto  \int_{\{\gamma \leq 1\}} \big( k (f, x, z) - h (k (f, x, z)) \big) \, \n (f, dz) \colon f \in F \Big\}, 
\]
which entails the existence of a modulus of continuity as claimed. 

We proceed with the final assertion. First, take \(R > 1\) such that 
\begin{align*}
    \sup_{f \in F} \n \, (f, \{\gamma > R\}) \leq \varepsilon. 
\end{align*}
Now, using again the Lipschitz continuity of \(h\) and \eqref{eq: conti assp}, we obtain that 
\begin{align*}
     \int_{\{\gamma > 1\}} \big\| h (k (f&, x, z)) - h (k (f, y, z))\big\| \, \n (f, dz) 
     \\&\leq 2 \|h\|_\infty \, \varepsilon +  \int_{\{1 < \gamma \leq R\}} \big\| h (k (f, x, z)) - h (k (f, y, z))\big\| \, \n (f, dz)
     \\&\leq 2 \|h\|_\infty\, \varepsilon + \ka (\|x - y\|) \, R \ \n (f, \{\gamma > 1\})
     \phantom \int
      \\&\leq 2 \|h\|_\infty\, \varepsilon + \ka (\|x - y\|) \, R \, \int (\gamma^2 (z) \wedge 1) \, \n (f, dz). \phantom \int
\end{align*}
This establishes equi-continuity of 
\[
\Big\{ K \ni x \mapsto  \int_{\{\gamma \leq 1\}} h (k (f, x, z)) \, \n (f, dz) \colon f \in F \Big\}, 
\]
and therefore, completes the proof. 
      \end{proof}

\end{document}